\newtheorem{theorem}{Theorem}
\newtheorem{prop}{Proposition}[subsection]
\newtheorem{lemma}{Lemma}[subsection]
\newtheorem{corr}{Corollary}[subsection]
\theoremstyle{definition}
\newtheorem{defn}{Definition}[subsection]
\def\O{\mathfrak{o}}
\def\k{\bar{k}}
\def\e{\epsilon}
\def\W{\widetilde{W}}
\def\w{\widetilde{w}}
\def\val{\mathop{\mathrm{val}}}
\def\coker{\mathop{\mathrm{coker}}}
\def\s{\sigma}
\def\A{A(\O_F)}
\def\en{\e^{\nu}}
\def\enm{\e^{-\nu}}
\def\emu{\e^{\mu}}
\def\enmu{\e^{-\mu}}
\def\X{X_x(\en)}
\def\Xw{X_x(\e^{w^{-1}\nu})}
\def\phi{\varphi}
\def\bw{\bigwedge^m\!}
\def\f{f_\nu}
\def\ff{\bar{f}_\nu}
\def\Z{\mathbb{Z}}
\def\Umn{U_{m,N}}
\def\U{\bar{U}}
\def\heximages#1#2#3#4#5#6{
  \heximagessqueezedcarefully{1.7em}{-1em}{#1}{#2}{#3}{#4}{#5}{#6}
}
\def\heximagessqueezed#1#2#3#4#5#6#7{
  \heximagessqueezedcarefully{#1}{2em}{#2}{#3}{#4}{#5}{#6}{#7}
}
\def\heximagessqueezedcarefully#1#2#3#4#5#6#7#8{
  \begin{array}{ccc}
    & #3 & \\[#2]
    #5 \hspace{-#1} & & \hspace{-#1} #4 \\[2em]
    #6 \hspace{-#1} & & \hspace{-#1} #7 \\[#2]
    & #8 &
  \end{array}
}
\newenvironment{caselist}
	       {\begin{list}{\underline{Case \arabic{enumi}}:}
		   {\usecounter{enumi}
		     \setlength{\itemindent}{0.5in}
		     \setlength{\leftmargin}{0in}
		     \setlength{\rightmargin}{0in}
	       }}
	       {\end{list}}
\newcommand\subsubsubsection[1]{\vspace{0.5em}\begin{paragraph}{}\noindent \normalfont\large\itshape #1. \end{paragraph}\vspace{0.5em}}
\begin{document}
  \title{On Some Stratifications of Affine Deligne-Lusztig Varieties for $SL_3$}
  \author{Boris Zbarsky}
  \email{bzbarsky@mit.edu}
  \address{20 E Milton Rd, Unit 3
           Brookline, MA 02445}

  \begin{abstract}
  Let $L := \k((\e))$, where $k$ is a finite field with $q$ elements and $\e$
  is an indeterminate, and let $\s$ be the Frobenius automorphism.  Let $G$ be
  a split connected reductive group over the fixed field of $\s$ in $L$, and
  let $I$ be the Iwahori subgroup of $G(L)$ associated to a given Borel
  subgroup of $G$.  Let $\W$ be the extended affine Weyl group of $G$.  Given
  $x \in \W$ and $b \in G(L)$, we have some subgroup of $G(L)$ that acts on the
  affine Deligne-Lusztig variety $X_x(b) = \{ gI \in G(L)/I : g^{-1}b\s(g) \in
  IxI\}$ and hence a representation of this subgroup on the Borel-Moore
  homology of the variety.  This dissertation investigates this representation
  for certain $b$ in the cases when $G = SL_2$ and $G = SL_3$.
  \end{abstract}
  
  \maketitle

  \section{Introduction}
  Let $k$ be a finite field with $q$ elements and let $\k$ be an algebraic
  closure of $k$.  Let $\s:\k\to\k$ be the Frobenius morphism $\s(a) = a^q$.
  Let $L := \k((\e))$, where $\e$ is an indeterminate, and extend $\s$ to $L$
  by setting $\s(\e) = \e$.  Denote the valuation ring $\k[[\e]]$ of $L$ by
  $\O_L$.  Let $F := k((\e))$ and denote $k[[\e]] \subset \O_L$ by $\O_F$.

  Let $G$ be a split connected reductive group over $F$.  Let $A$ be a split
  maximal torus of $G$.  Let $W$ denote the Weyl group of $A$ in $G$ and let
  $\W = W \ltimes X_*(A)$ denote the extended affine Weyl group.  Fix a Borel
  subgroup $B$ containing $A$, so that $B=AU$, with $U$ unipotent, and let $I$
  denote the corresponding Iwahori subgroup of $G(L)$.  Then we have the
  Bruhat decomposition of $G(L)$ into double cosets $IxI$, where $x \in \W$.
  Let $X = G(L)/I$.  Let $U_w = w^{-1}U(L)w$, so that $U_1 = U(L)$.

  If $b \in G(L)$, then the $\s$-conjugacy class of $b$ is $\{g^{-1}b\s(g): g
  \in G(L)\}$.  For every $x \in \W$ we define (following~\cite{Kottwitz-f_nu})
  the affine Deligne-Lusztig variety $X_x(b) = \{gI \in X \,:\,
  g^{-1}b\s(g) \in IxI\}$.  Note that if $b_1$ and $b_2$ are in the same
  $\s$-conjugacy class, with $b_1 = h^{-1}b_2\s(h)$, then the varieties
  $X_x(b_1)$ and $X_x(b_2)$ are isomorphic, with the isomorphism $X_x(b_1) \to
  X_x(b_2)$ given by the translation $gI \to hgI$.

  Consider the subgroup $H < G(L)$ consisting of elements $h$ such
  that $h^{-1}b\s(h) = b$.  Elements of $H$ then act on the variety $X_x(b)$ by
  left-multiplication.  This action induces a representation of $H$ on the
  Borel-Moore homology of $X_x(b)$.

  Our goal is to study this representation when $G = SL_n$, $n = 2,3$ and $b$
  is a diagonal matrix whose nonzero entries have the form $\e^{\nu_i}$, where
  $\nu_i \neq \nu_j$ if $i \neq j$.  We will refer to such a $b$ as $\en$,
  where $\nu = (\nu_1, \nu_2, \ldots, \nu_n)$.  In general, we will use
  $\e^\mu$ to refer to an element of $G(L)$ which has the form
  \begin{equation*}
    \begin{pmatrix}
      \e^{\mu_1} & 0 & \cdots & 0 \\
      0 & \e^{\mu_2} & \cdots & 0 \\
      \vdots& \vdots & \ddots & \vdots \\
      0 & 0 & \cdots & \e^{\mu_n}
    \end{pmatrix}.
  \end{equation*}
  For our choice of $b$, the subgroup $H$ acting on $\X$ is $A(F) = \Z^{n-1}
  \times \A$.  We will show that the subgroup $\A$ acts trivially on the
  Borel-Moore homology of $\X$, and therefore the representation of $A(F)$
  factors through the representation of $\Z^{n-1}$.  This last representation
  is induced by the action which is given by $(i_1, \ldots, i_{n-1}) \cdot gI =
  \e^{(i_1, \ldots, i_{n-1}, -i_1-\ldots-i_{n-1})}gI$, and corresponds to
  permutation of the homology spaces of disjoint closed subsets of $\X$.

  In order to study these representations, we will develop, in
  Section~\ref{sec:DeterminingIwahoriDoubleCosetForElement}, a method that,
  given $g \in G(L)$ and $x \in \W$, gives necessary and sufficient conditions
  for $g$ to be in $IxI$ in terms of the valuations of the determinants of the
  minors of $g$ (including the $1 \times 1$ minors).  We will also develop, in
  Section~\ref{sec:DeterminingUOrbitForElement}, a method that, given $g \in
  G(L)$ and $w \in \W$, produces the element $x \in \W$ such that $g \in
  w^{-1}U_1 w x I$.  Then in
  Sections~\ref{sec:U-orbits}~and~\ref{sec:triviality-results} we will prove
  some general theorems applicable to $SL_n$ and $GL_n$ which we will later use
  for $SL_3$.

  For the case $G = SL_2$, we will show that the representation of $\A$ on
  the Borel-Moore homology of $\X$ is trivial by showing that not only do we
  have a left-multiplication action of $\A$ on $\X$ but that we also have
  a left-multiplication action of the bigger group $A(\O_L)$ on $\X$.  Since
  $A(\O_L)$ is connected, the action of $\A$ on the homology of $\X$ must
  be trivial.  This will be done in Chapter~\ref{chap:SL_2}.

  For the case $G = SL_3$, this approach would work in most cases, but there
  are some situations in which $A(\O_L)$ does not act on $\X$ by
  left-multiplication.  The approach we will take for $SL_3$ will be to
  decompose $\X$ into a union of disjoint closed subsets, each of which is
  preserved by $\A$, to produce a stratification of each of these closed
  subsets into strata preserved by $\A$, and finally to extend the action of
  $\A$ on each stratum to an action of $A(\O_L)$.  We will then argue that this
  means that the representation of $\A$ on the Borel-Moore homology of each of
  the disjoint closed subsets is trivial.  This will be done in
  Chapter~\ref{chap:SL_3}.

  The author would like to thank Robert Kottwitz for his suggestion of a
  research direction and many useful conversations.

  \section{Preliminaries}
  In this chapter we develop some techniques that will be used later.
  Throughout the chapter, $G$ is $SL_n$ or $GL_n$ for any $n \ge 2$.  Let $K =
  G(\O_L)$.

  Let $V = L^n$.  If we fix a basis for $V$, we may view elements of $G(L)$ as
  $n\times n$ matrices with elements in $L$.  Fix the basis for $V$ such that
  the elements of $A(L)$ are diagonal matrices and the elements of $B(L)$ are
  upper triangular; call this basis $\{ v_1, \ldots, v_n \}$.  Then with
  respect to this basis, an element $M$ of $I$ has the form
  \begin{equation}
    \label{eq:Iwahori_form}
    M_{ij} \in 
    \begin{cases} 
      \O_L^\times & \text{if $i=j$} \\ 
      \O_L & \text{if $i < j$} \\
      \e\O_L & \text{if $i > j$}
    \end{cases}.
  \end{equation}
  From this point on, we always work in this basis, and identify elements of
  $G(L)$ with matrices, and elements of $\W$ with their unique representatives
  which are monomial matrices whose nonzero entries have the form $\e^k$, $k$
  an integer.
  
  \subsection{Determining the Iwahori Double-Coset for a Given Element of $G(L)$}
  \label{sec:DeterminingIwahoriDoubleCosetForElement}
  Let $M \in G(L)$ and $x \in \X$.  We want to give necessary and sufficient
  conditions for $M$ to be in $IxI$.  We will show that these conditions can be
  expressed in terms of conditions on the valuations of the determinants of
  minors of $M$.
  
  To prove this, we first reduce the problem to that of looking at valuations
  of elements by considering $\bw V$ for $m = 1, \ldots, n$.  Any ordered
  $m$-tuple of distinct integers from $1$ to $n$ determines a vector in $\bw
  V$---given the $m$-tuple $(i_1, \ldots, i_m)$ we get the vector $v_{i_1}
  \wedge \cdots \wedge v_{i_m}$.  The vectors corresponding to the set of
  increasing $m$-tuples give a basis for $\bw V$.  We pick an order for this
  basis by lexicographically ordering the increasing $m$-tuples, and from this
  point on this is the basis we use for $\bw V$.

  Given any matrix $M \in G(L)$ we define the matrix $\bw M$ by requiring that
  \begin{equation*}
    \left(\bw M\right) (v_{i_1} \wedge \cdots \wedge v_{i_m}) =
    (Mv_{i_1}) \wedge \cdots \wedge (Mv_{i_m}).
  \end{equation*}

  \begin{lemma}
    \label{lemma:Form_lemma}
    If $M$ has the form given in (\ref{eq:Iwahori_form}), then so does $\bw M$ for
    any $m$.
  \end{lemma}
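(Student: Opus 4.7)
The plan is to express the entries of $\bw M$ as minor determinants of $M$ and then verify each of the three conditions in (\ref{eq:Iwahori_form}) separately, working modulo $\e$ for the two nontrivial cases.

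Recall that if we index the chosen basis of $\bw V$ by increasing $m$-tuples, then for any two such tuples $I=(i_1<\cdots<i_m)$ and $J=(j_1<\cdots<j_m)$ the $(I,J)$-entry of $\bw M$ is the minor determinant $\det(M_{i_a,j_b})_{a,b=1}^m$. Since every entry of $M$ lies in $\O_L$, every such determinant automatically lies in $\O_L$, which already takes care of the ``$\O_L$ above the diagonal'' requirement. It remains to show that diagonal entries ($I=J$) are units and that entries strictly below the diagonal ($I>J$ in lex order) lie in $\e\O_L$.

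For the diagonal case $I=J$, the relevant $m\times m$ submatrix of $M$ is itself of the Iwahori shape (\ref{eq:Iwahori_form}): its diagonal entries $M_{i_a,i_a}$ are units, entries with $a<b$ satisfy $i_a<i_b$ so lie in $\O_L$, and entries with $a>b$ satisfy $i_a>i_b$ so lie in $\e\O_L$. Reducing mod $\e$ gives an upper triangular matrix with unit diagonal, whose determinant is a unit mod $\e$; therefore the minor is a unit in $\O_L$. For the below-diagonal case $I>J$, I would argue that the minor vanishes mod $\e$. Write out the Leibniz expansion
\begin{equation*}
  \det(M_{i_a,j_b})_{a,b} = \sum_{\pi\in S_m}\operatorname{sgn}(\pi)\prod_{a=1}^m M_{i_a,j_{\pi(a)}},
\end{equation*}
and use that $M\bmod\e$ is upper triangular, so $M_{i_a,j_{\pi(a)}}\in\e\O_L$ whenever $i_a>j_{\pi(a)}$. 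It then suffices to show there is no $\pi\in S_m$ with $i_a\le j_{\pi(a)}$ for every $a$.

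This last combinatorial point is the main (and essentially only) obstacle, and will be resolved by a short pigeonhole argument. Since $I>J$ lexicographically, pick the first index $k$ where they differ, so $i_k>j_k$. For every $a\ge k$ we have $i_a\ge i_k>j_k$, and since $J$ is increasing, any $b$ with $j_b\ge i_a$ must satisfy $b>k$. Thus any hypothetical $\pi$ would have to inject the $m-k+1$ indices $\{k,k+1,\ldots,m\}$ into the $m-k$ indices $\{k+1,\ldots,m\}$, which is impossible. Hence every term in the Leibniz expansion is divisible by $\e$, so the minor lies in $\e\O_L$, completing the verification.
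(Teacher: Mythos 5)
Your proof is correct, and for the below-diagonal case it takes a genuinely different route from the paper. The paper proves that case by induction on $m$: expanding the minor along its first column, comparing the first entries of the two $m$-tuples, and either noting the whole first column lies in $\e\O_L$ or passing to the $(m-1)\times(m-1)$ minor obtained by dropping the shared first row/column index and invoking the inductive hypothesis. You instead work directly from the Leibniz expansion and prove a clean combinatorial statement: if $I>J$ in lexicographic order among increasing $m$-tuples, there is no permutation $\pi$ with $i_a\le j_{\pi(a)}$ for all $a$, which you establish by a pigeonhole argument (the indices $\{k,\ldots,m\}$ would have to inject into $\{k+1,\ldots,m\}$). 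Your version has the advantage of being non-inductive and isolating the real content in a single combinatorial lemma about tuples; the paper's cofactor-expansion induction is perhaps more in keeping with the ``expand down a column'' spirit used elsewhere but carries slightly more bookkeeping. Both are complete; the diagonal and above-diagonal cases are handled identically in both.
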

  \begin{proof}
    An entry of $\bw M$ is the determinant of an $m\times m$ minor of M.  To be
    more precise, if we number the increasing $m$-tuples of integers between 1
    and $n$, ordered lexicographically, from 1 to $\binom{n}{m}$ then $(\bw
    M)_{ij}$ is the determinant of the minor of $M$ which consists of entries
    in rows given by the $i$-th $m$-tuple and columns given by the $j$-th
    $m$-tuple.  Since all entries of $M$ are in $\O_L$, it is clear that any
    such determinant will be in $\O_L$.
    
    If $i = j$, then the two $m$-tuples are identical and if we reduce all
    entries in the minor modulo $\e\O_L$ we will get an upper triangular matrix
    with elements of $\k^\times$ on the diagonal.  Its determinant will be an
    element of $\k^\times$.  But that means that the determinant of our
    original minor is in $\O_L^\times$, as desired.

    Finally, if $i > j$ we need to show that the determinant of the minor is in
    $\e\O_L$.  If $m = 1$ this is true because $M$ has the form given in
    (\ref{eq:Iwahori_form}).  Now we proceed by induction on $m$.

    If the first integer in the $i$-th $m$-tuple is greater than the first
    integer in the $j$-th $m$-tuple, then all integers in the $i$-th $m$-tuple
    are greater than the first integer of the $j$-th m-tuple (since we are
    considering increasing $m$-tuples).  In this case the minor we are
    considering has only elements of $\e\O_L$ in its first column, and hence
    its determinant is in $\e\O_L$.

    The other possibility is that the first integer in the $i$-th $m$-tuple is
    equal to the first integer in the $j$-th $m$-tuple.  Now we find the
    determinant of our original minor by expanding around its first column.
    All entries in this column except for the first entry are in $\e\O_L$
    because we are considering increasing $m$-tuples.  The first entry in the
    column is in $\O_L^\times$ but the determinant of the corresponding
    $(m-1)\times(m-1)$ minor is in $\e\O_L$.  Indeed, since $i > j$, the
    $(m-1)$-tuple we get by dropping the first integer in the $i$-th $m$-tuple
    is greater than the $(m-1)$-tuple we get by dropping the first integer in
    the $j$-th $m$-tuple.  By the inductive hypothesis, the determinant of the
    $(m-1)\times(m-1)$ minor we are interested in is in $\e\O_L$.  So the
    determinant of our original $m\times m$ minor is in $\e\O_L$.
  \end{proof}

  Now we attach to any square $n \times n$ matrix $M$ with entries in $L$ a
  triple of integers $(v, d, c)$.  Here $v$ is the minimum of the valuations of
  entries of $M$, $d$ is the minimum of $\{ n + j-i : \val(M_{ij}) = v\}$ (so
  the number of the first diagonal in which an entry of valuation $v$ occurs,
  numbering from bottom left), and $c$ is the minimum of $\{ j :
  \val(M_{(j-d+n),j}) = v\}$ (so the minimum of the column numbers of entries
  on diagonal $d$ that have valuation $v$).

  \begin{lemma}
    \label{lemma:Triple_lemma}
    The triple $(v, d, c)$ attached to a matrix $M$ is invariant under both
    left and right multiplication by matrices that have the form given in
    (\ref{eq:Iwahori_form}).
  \end{lemma}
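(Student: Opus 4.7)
The plan is, for each of the three coordinates in $(v, d, c)$, to identify a specific entry of $NM$ (respectively $MN$) whose valuation realizes that coordinate, and to show that every entry at a strictly ``earlier'' position (either on a smaller diagonal, or on diagonal $d$ in a smaller column) has valuation $> v$ after multiplication. Since $N$ has all entries in $\O_L$ and every entry of $M$ has valuation $\geq v$, one sees immediately that $\val((NM)_{ij}) \geq v$ and $\val((MN)_{ij}) \geq v$ for all $i, j$, so $v$ can only grow; the real content is showing that it does not, and that the distinguished position $(i_0, c)$ with $i_0 = c + n - d$ is preserved.

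Expand $(NM)_{ij} = \sum_k N_{ik} M_{kj}$ at $(i, j) = (i_0, c)$ and split the sum by the relation of $k$ to $i_0$. The term $k = i_0$ contributes $N_{i_0 i_0} M_{i_0, c}$ with $N_{i_0 i_0} \in \O_L^\times$, giving valuation exactly $v$; the terms $k < i_0$ have $N_{i_0, k} \in \e\O_L$, so their valuations are $\geq v + 1$; and the terms $k > i_0$ involve $M_{k, c}$, which sits on diagonal $n + c - k < d$ of $M$ and therefore has valuation $> v$ by definition of $d$. Thus exactly one term contributes valuation $v$, and $(NM)_{i_0, c}$ has valuation $v$ on diagonal $d$ in column $c$. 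The analogous split of $(MN)_{i_0, c} = \sum_k M_{i_0, k} N_{k, c}$ is organized around $k = c$: the $k = c$ term uses $N_{cc} \in \O_L^\times$, the $k > c$ terms use $N_{kc} \in \e\O_L$, and the $k < c$ terms involve $M_{i_0, k}$ on diagonal $d - (c - k) < d$; the same conclusion follows.

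It remains to rule out ``earlier'' entries. For $(NM)_{ij}$ with $n + j - i < d$, the $k < i$ terms lie in $\e\O_L \cdot \O_L$ and the $k \geq i$ terms involve $M_{kj}$ on diagonal $n + j - k \leq n + j - i < d$, so every term has valuation $> v$. For $(NM)_{ij}$ on diagonal $d$ with $j < c$, the same analysis handles $k \neq i$, while the $k = i$ term involves $M_{ij}$ on diagonal $d$ in column $j < c$, which has valuation $> v$ by the defining minimality of $c$. The right-multiplication case is symmetric, using $N_{kj} \in \e\O_L$ when $k > j$ and the fact that entries $M_{ik}$ with $k < j$ lie on strictly lower diagonals than $M_{ij}$.

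The main bookkeeping obstacle is matching the three Iwahori regimes of $N$ (subdiagonal in $\e\O_L$, diagonal in $\O_L^\times$, superdiagonal in $\O_L$) against the three position regimes in $M$ (diagonal $< d$, diagonal $d$ with column $< c$, or the witness entry itself) so that in each expansion exactly one term achieves the minimum valuation $v$ at the predicted location. Once that table of cases is organized, the arguments for left and right multiplication are formally parallel and no further input beyond the Iwahori form itself is required.
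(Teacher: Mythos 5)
Your proof is correct and takes essentially the same approach as the paper: both expand $(NM)_{ij}$ and $(MN)_{ij}$ as sums, use the Iwahori shape of $N$ to split those sums so that exactly one term contributes valuation $v$ at the distinguished position $(r,c)$ with $r = c - d + n$, and then argue that every ``earlier'' position in the (diagonal, column) order has valuation strictly greater than $v$. The only cosmetic difference is that you phrase the second half as a direct elimination of earlier positions, while the paper phrases it contrapositively (any entry of valuation $v$ must lie at position $\ge (r,c)$) and in two stages (first the diagonal index, then the column index).
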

  \begin{proof}
    First note that the set of matrices that have the given form is a subset of
    $K$, and $v$ is invariant under left and right multiplication by elements
    of $K$.  So we only need to deal with $d$ and $c$.

    Fix an arbitrary matrix $M$ and let $N$ have the form given in
    (\ref{eq:Iwahori_form}). Let $(v, d, c)$ be the triple of integers attached
    to $M$.  Let $r = c - d + n$ be the row number of the entry which is in the
    $c$-th column and on the $d$-th diagonal.  Let the triple of integers
    attached to $MN$ be $(v, x, y)$ and let the triple of integers attached to
    $NM$ be $(v, w, z)$.  We want to prove that $x = w = d$ and $y = z = c$.

    Clearly,
    \begin{align}
      (NM)_{rc} &= \sum_{k = 1}^n N_{rk}M_{kc} \nonumber \\
      \label{eq:NM-decomposition}
      &= \sum_{k = 1}^{r-1}N_{rk}M_{kc} + N_{rr}M_{rc} +
         \sum_{k = r+1}^{n}N_{rk}M_{kc}.
    \end{align}

    But for $k < r$ we have $N_{rk} \in \e\O_L$.  So the valuation of the first
    term in (\ref{eq:NM-decomposition}) is at least $v + 1$.  The valuation of
    the second term is exactly $v$, since $\val(N_{rr}) = 0$ and $\val(M_{rc})
    = v$ by definition of $r$, $c$, and $d$.  Finally, $\val(M_{kc}) > v$ for
    $k > r$, since then $n + c - k < d$.  Since $N \in K$, the valuation of the
    third term is at least $v + 1$.  Thus the valuation of $(NM)_{rc}$ is $v$.
    By a similar calculation, the valuation of $(MN)_{rc}$ is $v$.  Therefore
    $x \le d$ and $w \le d$ (since $d$ is in the sets that $x$ and $w$ are
    minima of).

    Now consider any $i,j$ such that $\val((NM)_{ij}) = v$.  Since $(NM)_{ij} =
    \sum_{k=1}^n N_{ik}M_{kj}$ and since $\val(N_{ik}) > 0$ for $k < i$ while
    $\val(M_{kj}) \ge v$ for all $k,j$ and $N$ is in $K$, we must have
    $\val(M_{kj}) = v$ for some $k \ge i$.  Then we know that $n + j - k \ge d$
    and hence $n + j - i \ge n + j - k \ge d$.  Since $x$ is the minimum of
    such $n + j -i$, this means that $x \ge d$.  We already knew that
    $x \le d$, so we conclude that $x = d$.  By a similar argument applied to
    $(MN)_{ij}$, $w = d$.

    Now that we know that $x = d$, the fact that $\val((NM)_{rc}) = v$ means
    that $y \le c$.  Consider any $i,j$ such that $\val((NM)_{ij}) = v$ and $n
    + j - i = d$.  As before, $(NM)_{ij} = \sum_{k=1}^n N_{ik}M_{kj}$, so
    $\val(M_{kj}) = v$ for some $k \ge i$.  If $k > i$, then $n + j - k < n + j
    - i = d$, which cannot happen by definition of $d$.  So $k = i$ and
    $\val(M_{ij}) = v$.  But $n + j - i = d$, so by definition of $c$ we have
    $j \ge c$.  Since $y$ is the minimum of all such $j$, we must have $y \ge
    c$, hence $y = c$.  By a similar argument applied to $(MN)_{ij}$, $z = c$.
  \end{proof}

  \begin{theorem}
    \label{thm:DeterminingIwahoriDoubleCosetForElement}
    Given an element $M \in G(L)$, the $x \in \W$ such that $M \in IxI$ is
    uniquely determined by the valuations of determinants of all minors of $M$.
  \end{theorem}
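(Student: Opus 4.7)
The plan is to reduce the problem to invariants of $\bigwedge^m M$ via Lemmas~\ref{lemma:Form_lemma} and~\ref{lemma:Triple_lemma}, and then argue that these invariants uniquely determine $x$.

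First, every entry of $\bigwedge^m M$ is, up to sign, the determinant of an $m\times m$ minor of $M$, so its valuation is prescribed by the hypothesis. In particular, the triple $(v_m, d_m, c_m)$ attached to $\bigwedge^m M$ can be read off from the valuations of minors of $M$ for each $m\in\{1,\ldots,n\}$. Because $M\in IxI$, we may write $M=i_1\, x\, i_2$ with $i_1,i_2\in I$, and then $\bigwedge^m M=(\bigwedge^m i_1)(\bigwedge^m x)(\bigwedge^m i_2)$. By Lemma~\ref{lemma:Form_lemma}, each $\bigwedge^m i_j$ has Iwahori form in $\bigwedge^m V$, so Lemma~\ref{lemma:Triple_lemma} tells us the triple of $\bigwedge^m M$ coincides with the triple of $\bigwedge^m x$ for every $m$.

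Next, I exploit that $x$ is a monomial matrix: write $xv_i=\e^{k_i} v_{\pi(i)}$ for a permutation $\pi\in W$ and integers $k_i$. Then $\bigwedge^m x$ is itself a monomial matrix on $\bigwedge^m V$, whose column indexed by an $m$-subset $J$ has its unique nonzero entry in the row indexed by $\pi(J)$ with value $\pm\e^{\sum_{j\in J}k_j}$. The triple of $\bigwedge^m x$ therefore pinpoints a specific $m$-subset $J_m$, its image $\pi(J_m)$, and the sum $v_m=\sum_{j\in J_m}k_j$.

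To conclude, I show that as $m$ ranges over $1,\ldots,n$, these data jointly recover both $\pi$ and all of the $k_i$, hence $x$. My proposed route is to establish the nested chain $J_1\subset J_2\subset\cdots\subset J_n=\{1,\ldots,n\}$, whence $J_m\setminus J_{m-1}=\{j_m\}$ is a single index with $k_{j_m}=v_m-v_{m-1}$, and $\pi(J_m)\setminus\pi(J_{m-1})=\{\pi(j_m)\}$ pins down $\pi(j_m)$; iterating over $m$ then reconstructs $x$.

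The main technical obstacle will be justifying the nesting $J_m\subset J_{m+1}$ when several $k_i$'s tie for minimality. In the generic case nesting is immediate, because $J_m$ is then the unique set of indices with the $m$ smallest $k_i$. In the tied case, several $m$-subsets are simultaneously optimal, and one must check that the lex-order tie-breakers built into the definition of the triple---which mix the orderings on $m$-subsets in $\bigwedge^m V$ with the permutation $\pi$---consistently select nested choices. An alternative that sidesteps this is to induct on $n$: the triple of $M$ itself already identifies one entry of $x$, and extracting this entry should reduce the remaining problem to the analogous statement for a monomial matrix of size $n-1$, to which the inductive hypothesis applies.
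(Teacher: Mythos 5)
Your plan follows the paper's proof closely: reduce via Lemmas~\ref{lemma:Form_lemma} and~\ref{lemma:Triple_lemma} to the triples attached to $\bigwedge^m M$, observe that these equal the triples of $\bigwedge^m x$, and reconstruct $x$ by showing the column $m$-sets $J_m$ selected by the triples nest, so that $J_m\setminus J_{m-1}$ is a single index $j_m$ carrying the entry $\e^{v_m-v_{m-1}}$ of $x$ in row $i_m = \pi(j_m)$. This is the same route as the paper.

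The gap is exactly where you flag it: you do not establish the nesting $J_{m-1}\subset J_m$, and the alternative inductive route you suggest is only gestured at. The paper handles this step by giving a concrete greedy recipe: to pass from $x_{m-1}$ to $x_m$, among the nonzero entries of $x$ not already used in $x_{m-1}$ pick the one of minimal valuation, breaking ties first by the most bottom-left diagonal and then by the leftmost column, and declare $x_m$ to be the minor spanned by $x_{m-1}$ together with this entry; it then asserts that this greedy pick reproduces $(v_m,d_m,c_m)$. Unpacking that assertion requires checking that the lexicographic ordering on $m$-subsets interacts compatibly with inserting one new index into both the column set and its image under $\pi$, and this compatibility is precisely what your proposal leaves open. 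So the approach and the reconstruction formula are right, but the proof is incomplete until that step is carried out (either via the paper's greedy description or your suggested induction on $n$).
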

  \begin{proof}
    We will explicitly compute the monomial matrix $x$.  Indeed, there are two
    matrices $N_1, N_2 \in I$ such that $N_1 M N_2$ is this monomial matrix.
    Then for any $m$, $\bw N_1 \cdot \bw M \cdot \bw N_2 = \bw x$.  By
    Lemma~\ref{lemma:Form_lemma} and Lemma~\ref{lemma:Triple_lemma} the triples
    of integers associated to $\bw M$ and $\bw x$ are the same.  We can
    explicitly compute these triples for $\bw M$.  So the problem comes down to
    reconstructing $x$ given the triples of integers for $\bw x$, $m = 1,
    \ldots, n$.

    Let the triple of integers for $\bw x$ be $(v_m, d_m, c_m)$, and let $x_m$
    be the minor of $x$ that corresponds to the element in row $c_m - d_m +
    \binom{n}{m}$ and column $c_m$ in $\bw x$.  Since the determinant of any
    $m\times m$ minor of $x$ is either 0 or the product of $m$ of the nonzero
    entries of $x$, we know that $v_m$ is the sum of the $m$ smallest
    valuations of the entries of $x$.
  
    Now the triple for $m=1$ tells us that $x$ has an entry of valuation $v_1$
    in row $c_1 - d_1 + n$ and column $c_1$.  For $m > 1$, the $m$-tuple
    corresponding to $c_m$ is gotten from the $(m-1)$-tuple corresponding to
    $c_{(m-1)}$ by inserting a single integer $j_m$ somewhere.  Similarly, the
    $m$-tuple corresponding to $c_m - d_m + \binom{n}{m}$ is gotten from the
    $(m-1)$-tuple corresponding to $c_{(m-1)} - d_{(m-1)} + \binom{n}{m-1}$ by
    inserting a single integer $i_m$.  Indeed, to go from $x_{m-1}$ to $x_m$ we
    simply find the unique entry of $x$ which satisfies the following
    conditions:
    \begin{enumerate}
      \item Is not already in $x_{m-1}$.
      \item Has minimal valuation amongst entries satisfying condition 1.
      \item Is on the bottom-left-most diagonal amongst entries satisfying
	conditions 1 and 2.
      \item Is in the left-most column amongst entries satisfying
	conditions 1, 2, and 3.
    \end{enumerate}
    Then we let $x_m$ be the unique minor which contains this entry and
    $x_{m-1}$.  Since by assumption $x_{m-1}$ corresponds to the entry in row
    $c_{m-1} - d_{m-1} + \binom{n}{m-1}$ and column $c_{m-1}$, the above
    conditions enforce that $x_m$ corresponds to the entry in row $c_m - d_m +
    \binom{n}{m}$ and column $c_m$.

    Then we know that $x$ has an entry with valuation $v_m - v_{(m-1)}$ in row
    $i_m$ and column $j_m$.  As $m$ runs from 1 to $n$, we fill in all $n$
    nonzero entries of $x$.
  \end{proof}

  As a consequence we know the necessary and sufficient conditions on the
  valuations of determinants of minors of $g$ such that $g \in IxI$ for a given
  $x \in \W$.  In particular, if we find the triple $(v, d, c)$ for $x$, then
  entries of $g$ that are on diagonals further toward the lower-left corner
  than diagonal $d$ must have valuations strictly greater than $v$.  All other
  entries must have valuation at least $v$, and the entry in row $c - d + n$
  and column $c$ must have valuation equal to $v$.  Similar conditions apply
  to the determinants of minors of $g$, in terms of the triples $(v_m, d_m,
  c_m)$  for the matrices $\bw x$.

  \subsection{Determining the $w^{-1}U_1 w$-Orbit for a Given Element of $G(L)$}
  \label{sec:DeterminingUOrbitForElement}
  We fix $w \in W$.  Let $U' = w^{-1}U_1 w$.  Then $G(L)$ is partitioned into
  double cosets $U'xI$, where $x \in \W$.  Given an element $M \in G(L)$, we
  can find the unique $x$ such that $M \in U'xI$ by applying the following
  algorithm:
  \begin{enumerate}
    \item Let $i$ range from $n$ to 1, inclusive, starting at $n$.
    \item \label{item:u-orbit-repeat_from_here} Let $r_i \in \{1, \ldots, n\}$
      be the image of $i \in \{1, \ldots, n\}$ under the permutation in
      $\Sigma_n$ represented by $w^{-1}$.
    \item Find the entry in the $r_i$ row of $M$ which has minimal valuation in
      that row and which is the leftmost entry with this valuation.  Let $c_i$
      be the number of the column in which this entry is found.
    \item Use column operations by elements of $I$ to eliminate all other
      entries in this row.  This is possible because the entry we chose was the
      leftmost entry of minimal valuation in this row.
    \item Use row operations by elements of $U'$ to eliminate all entries in
      $c_i$ which are not in row $r_i$.  This is possible because all the
      entries that we wouldn't be able to eliminate with an element of $U'$
      have already been eliminated at earlier steps (for greater values of
      $i$).
    \item Decrement $i$ by 1 and repeat from
    step~\ref{item:u-orbit-repeat_from_here}.
  \end{enumerate}
  When this procedure is finished, the resulting matrix is a monomial matrix.
  It can be multiplied on the right by an element of $I$ to make all the
  entries be powers of $\e$, at which point we have a representative for an
  element of $\W$.  This is the $x$ we sought.

  The reason this procedure works is that finding $x$ such that $M \in U'xI$ is
  equivalent to finding $x$ such that $wM \in U_1 wxI$.  Looking at rows of $M$
  in the order $r_n, r_{n-1}, \ldots, 1$ corresponds to looking at rows of $wM$
  in the oder $n, n-1, \ldots, 1$.  Since elements of $U_1$ are
  upper-triangular, looking at the rows in this order would clearly let us
  compute $wx$ starting with the $n$-th row and working upward.  Since we want
  to compute $x$, we want to multiply the result by $w^{-1}$, which just
  permutes the rows.  So we are computing the rows of $x$ in the order $r_n,
  r_{n-1}, \ldots, 1$, which is exactly what the above algorithm does.

  \subsection{Sets of the form $\X \cap U_1 wI$}
  \label{sec:U-orbits}
  Fix a coweight $\nu = (\nu_1, \nu_2, \ldots \nu_n)$ which is strictly
  dominant.  That is, $\nu_i > \nu_j$ if $i < j$.  Fix $w \in W$, and let $I' =
  w I w^{-1}$ and $y = w x w^{-1}$.  We will consider the intersection $\X \cap
  U_1 wI$ and the action of $\A$ on this intersection.  We are interested in
  the left-multiplication action, but since $\A \subset I$, the
  left-multiplication action and the action $t \cdot gI = tgt^{-1}I$ are in
  fact the same action, which we will call the ``conjugation action''
  throughout this section.

  First, note that, as discussed in \cite{Kottwitz-f_nu}, left multiplication
  by $w^{-1}$ gives an isomorphism between $\X \cap U_1 wI$ and $\Xw \cap U_w
  I$.  Under this isomorphism, the conjugation action of $\A$ becomes a
  composition of an automorphism of $\A$ (conjugation by $w$) and the
  conjugation action of $\A$.  Therefore, if we show that all elements of $\A$
  act trivially on the homology of $\Xw \cap U_w I$ when they act by
  conjugation on the set, then they all act trivially on the homology of $\X
  \cap U_1 wI$ when they act by conjugation on the set.
  
  Now for any coweight $\mu$ and any $w \in W$ we can define $f_\mu : U_w \to
  U_w$ by $f_\mu(h) = h^{-1}\emu\s(h)\enmu$.  Then
  \begin{equation*}
    X_x(\emu) \cap U_w I = f_\mu^{-1}(IxI\enmu \cap U_w)/(U_w \cap I)
  \end{equation*}
  and setting $\mu = w^{-1}\nu$ we get
  \begin{equation}
    \label{f_u}
    \Xw \cap U_w I = f_{w^{-1}\nu}^{-1}(IxI\e^{-w^{-1}\nu} \cap U_w)/(U_w \cap I).
  \end{equation}
  But if $h \in U_w$ then
  \begin{equation*}
    f_{w^{-1}\nu}(h) = h^{-1} w^{-1} \en w \s(h) w^{-1}
    \enm w = w^{-1} \f(whw^{-1}) w,
  \end{equation*}
  so for any $S \subset U_w$ 
  \begin{equation*}
    f^{-1}_{w^{-1}\nu}(S) = w^{-1}\f^{-1}(wSw^{-1})w
  \end{equation*}
  and in particular, setting $S = IxI\e^{-w^{-1}\nu} \cap U_w$,
  \begin{equation}
    \label{move_w}
    f_{w^{-1}\nu}^{-1}(IxI\e^{-w^{-1}\nu} \cap U_w) =
    w^{-1}\f^{-1}(wIxIw^{-1}\enm
    \cap U_1) w.
  \end{equation}
  Combining (\ref{move_w}) with (\ref{f_u}) we see that
  \begin{equation}
    \Xw \cap U_w I = w^{-1} \bigl( \f^{-1}(I'yI'\enm \cap U_1) / (U_1 \cap
    I') \bigr) w
  \end{equation}
  and therefore
  \begin{equation}
    \label{eq:f_nu_isomorphism}
    \Xw \cap U_w I \cong \f^{-1}(I'yI'\enm \cap U_1) / (U_1 \cap I') 
  \end{equation}
  so that it is sufficient to study the behavior of $\f : U_1 \to U_1$ for
  strictly dominant $\nu$.

  \subsubsection{Behavior of $\f$}
  Let $g \in U_1$, and write $g_{ij}$ for the entry in the $i$-th row and
  $j$-th column of $g$.  Then we have
  \begin{equation}
    \label{eq:inverse}
    (g^{-1})_{ij} = \begin{cases}
      -\sum_{k = i + 1}^j g_{ik}(g^{-1})_{kj} & i < j \\
      1 & i = j \\
      0 & i > j
    \end{cases},
  \end{equation}
  with the cases when $i \ge j$ following from the fact that $g^{-1} \in U_1$.
  This allows computation of any entry of $g^{-1}$, since the lower bound of
  the summation is strictly greater than $i$, so for any given $j$ we simply
  compute the entries of the $j$-th column of $g^{-1}$ starting at the diagonal
  and going up until the entry we want.

  Now, since $g_{lj} = 0$ for $l > j$ and $(g^{-1})_{il} = 0$ for $i > l$,
  \begin{equation}
    \label{eq:def_f(g)_ij}
    \left(\f(g)\right)_{ij} = \sum_{l=i}^j (g^{-1})_{il} \cdot \e^{\nu_l-\nu_j}\s(g_{lj})
  \end{equation}
  and since $(g^{-1})_{ii} = 1$
  \begin{equation*}
    \left(\f(g)\right)_{ij} = \e^{\nu_i-\nu_j}\s(g_{ij}) + \sum_{l=i+1}^j (g^{-1})_{il} \cdot \e^{\nu_l-\nu_j}\s(g_{lj}).
  \end{equation*}
  Substituting the expression from (\ref{eq:inverse}), we get
  \begin{align}
    \label{eq:explicit_f_nu}
    \left(\f(g)\right)_{ij} &= \e^{\nu_i-\nu_j}\s(g_{ij}) +
    \sum_{l=i+1}^j  \left(- \sum_{k=i+1}^l g_{ik}(g^{-1})_{kl} \right) \cdot \e^{\nu_l-\nu_j}\s(g_{lj}) \nonumber \\
    &= \e^{\nu_i-\nu_j}\s(g_{ij}) - 
    \sum_{k=i+1}^j g_{ik} \left(\sum_{l=k}^j (g^{-1})_{kl} \cdot
    \e^{\nu_l-\nu_j}\s(g_{lj})\right). \nonumber \\
    \noalign{\noindent Now by (\ref{eq:def_f(g)_ij}) the summation in
    parentheses is just $\left(\f(g)\right)_{kj}$, and $\left(\f(g)\right)_{jj}
    = 1$, so we see that}
    \left(\f(g)\right)_{ij} &= \e^{\nu_i-\nu_j}\s(g_{ij}) - g_{ij} -
    \sum_{k=i+1}^{j-1} g_{ik} \left(\f(g)\right)_{kj}.
  \end{align}

  \begin{prop}
    \label{prop:f_nu_dependence}
    $\left(\f(g)\right)_{ij}$ only depends on $g_{ij}$ and on $\{ g_{pq} \,|\,
    q-p < j - i \}$---the entries of $g$ that are closer to the main diagonal
    than $(i,j)$.
  \end{prop}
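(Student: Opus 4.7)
The plan is to argue by induction on the difference $d = j - i$, using the recursive formula
\begin{equation*}
  \left(\f(g)\right)_{ij} = \e^{\nu_i-\nu_j}\s(g_{ij}) - g_{ij} -
  \sum_{k=i+1}^{j-1} g_{ik} \left(\f(g)\right)_{kj}
\end{equation*}
derived immediately above the proposition statement. This formula is the only real ingredient; everything else is bookkeeping.

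For the base case $d = 0$, we have $i = j$ and $(\f(g))_{ii} = 1$ (as noted in the derivation), which vacuously depends only on $g_{ii}$. For the inductive step, fix $i < j$ and assume the proposition for all pairs $(k,l)$ with $l - k < j - i$. The first two terms on the right-hand side of the recursion involve only $g_{ij}$, so they are fine. In each summand $g_{ik}(\f(g))_{kj}$ with $i < k < j$, the factor $g_{ik}$ satisfies $k - i < j - i$, so it lies in the allowed set $\{g_{pq} : q - p < j - i\}$. For the factor $(\f(g))_{kj}$, note that $j - k < j - i$, so by the inductive hypothesis it depends only on $g_{kj}$ and on entries $g_{pq}$ with $q - p < j - k$. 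Since $j - k < j - i$, both $g_{kj}$ itself and every such $g_{pq}$ satisfy $q - p < j - i$, and hence lie in the permitted set of entries.

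There is no real obstacle here: the recursion was written precisely so that the indices appearing on the right-hand side are strictly closer to the main diagonal than $(i,j)$, and the induction walks this recursion outward from the diagonal. The only point requiring care is the transitivity step $q - p < j - k < j - i$, which ensures that the inductive dependencies for $(\f(g))_{kj}$ translate correctly into allowed dependencies for $(\f(g))_{ij}$.
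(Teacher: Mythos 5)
Your proof is correct and is essentially identical to the paper's: both argue by induction on $j-i$ using the recursion (\ref{eq:explicit_f_nu}), observing that every index pair appearing on the right-hand side is strictly closer to the diagonal. The only cosmetic difference is that you start the base case at $j-i=0$ where the paper starts at $j-i=1$ (where the sum is empty); both choices work.
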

  \begin{proof}
    We take (\ref{eq:explicit_f_nu}) and proceed by induction on $j-i$.  The
    base case $j-i = 1$ follows because the summation drops out of
    (\ref{eq:explicit_f_nu}).  The inductive step follows because for
    all the terms in the summation $j - k < j - i$, since $k > i$, and $k - i <
    j - i$, since $k < j$.
  \end{proof}

  \begin{prop}
    \label{prop:f_nu_bijection}
    $\f$ is a bijection and $\f(U(\O_L)) = U(\O_L)$.
  \end{prop}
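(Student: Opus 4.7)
The plan is to invert $\f$ entry-by-entry, using the recursive formula derived in (\ref{eq:explicit_f_nu}) together with an analysis of the scalar equation $\e^{m}\s(x) - x = y$ in $L$.

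First I would rearrange (\ref{eq:explicit_f_nu}) so that, writing $h = \f(g)$ and $m_{ij} := \nu_i - \nu_j > 0$ for $i < j$, we have
\begin{equation*}
  \e^{m_{ij}}\s(g_{ij}) - g_{ij} = h_{ij} + \sum_{k=i+1}^{j-1} g_{ik}\, h_{kj}.
\end{equation*}
The right-hand side only involves $h_{pq}$ and $g_{pq}$ with $q - p < j - i$, i.e., entries strictly closer to the main diagonal than $(i,j)$. This sets up an induction on $j - i$ for inverting $\f$, provided we can solve a scalar equation of the form $\e^{m}\s(x) - x = y$ for each fixed $m > 0$.

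The next step is to analyze the map $\phi_m : L \to L$ defined by $\phi_m(x) = \e^{m}\s(x) - x$, for any integer $m > 0$. Writing $x = \sum_{n} a_n \e^n$ and $y = \sum_n b_n \e^n$ with $a_n, b_n \in \k$ (each supported on $n$ bounded below), the relation $\phi_m(x) = y$ becomes the recursion $a_{n-m}^q - a_n = b_n$, i.e., $a_n = a_{n-m}^q - b_n$. Given $y \in L$ with $b_n = 0$ for $n < N$, one checks that setting $a_n = 0$ for $n < N$ and then using the recursion defines a unique $x \in L$ with $\phi_m(x) = y$; if moreover $N \ge 0$, the solution satisfies $a_n = 0$ for $n < 0$, so $\phi_m$ restricts to a bijection $\O_L \to \O_L$.

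Combining these, I would prove both statements simultaneously by induction on $d = j - i$. The base case $d = 1$ is immediate from the scalar bijectivity, since the sum in the recursion is empty. For the inductive step, every $g_{ik}$ with $i < k < j$ and every $h_{kj}$ with $i < k < j$ has $q - p < d$, so by the inductive hypothesis the right-hand side is uniquely determined and lies in $\O_L$ whenever all relevant $h_{pq}$ do; then $\phi_{m_{ij}}$ uniquely recovers $g_{ij} \in L$, with $g_{ij} \in \O_L$ iff $h_{ij} \in \O_L$. Running this for all $(i,j)$ with $i < j$ produces the unique preimage $g \in U_1$ of any $h \in U_1$ and shows that $g \in U(\O_L)$ iff $h \in U(\O_L)$, giving both $\f$ bijective and $\f(U(\O_L)) = U(\O_L)$. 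The only genuinely non-formal step is the analysis of $\phi_m$; everything else is bookkeeping via Proposition~\ref{prop:f_nu_dependence}.
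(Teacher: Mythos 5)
Your proposal is correct and matches the paper's approach for surjectivity and for $\O_L$-preservation: both invert $\f$ entry-by-entry via the recursion in (\ref{eq:explicit_f_nu}) and the bijectivity of the scalar map $x \mapsto \e^m\s(x) - x$ on $L$ (and on $\O_L$) for $m > 0$, and your explicit coefficient recursion $a_n = a_{n-m}^q - b_n$ correctly unpacks the paper's remark that the leading coefficient of the solution is the negative of the leading coefficient of the right-hand side. The one structural difference is injectivity of $\f$: the paper proves it by a separate global argument (if $\f(g_1)=\f(g_2)$ then $\en\s(g_1 g_2^{-1})\enm = g_1 g_2^{-1}$, and strict dominance of $\nu$ forces the off-diagonal entries of $g_1 g_2^{-1}$ to vanish), whereas you fold injectivity into the same induction on $j-i$ by invoking uniqueness of each scalar solution. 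Both are valid; your version is somewhat more uniform, handling bijectivity and $\O_L$-preservation in a single induction, while the paper's global computation gives a quicker, coordinate-free route to injectivity once surjectivity is in hand.
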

  \begin{proof}
    First we prove surjectivity.  Given an $h \in U_1$ we construct $g \in U_1$
    such that $\f(g) = h$.  For a given $i$, we construct the entries $g_{ij}$,
    for $j \ge i$, by induction on $j$.  For the base case $j=i$, we have
    $g_{ij} = 1$.  Now suppose that we already know $g_{ii}, \ldots,
    g_{i,j-1}$.  Then by (\ref{eq:explicit_f_nu}) we must find $g_{ij}$ such
    that
    \begin{equation*}
      \e^{\nu_i-\nu_j}\s(g_{ij}) - g_{ij} = h_{ij} +
      \sum_{k=i+1}^{j-1} g_{ik} h_{kj}.
    \end{equation*}
    But $\nu$ is strictly dominant, so $\nu_i - \nu_j > 0$, and given any $r >
    0$ and any $a \in L$ the equation $\e^{r}\s(y) - y = a$ has a solution in
    $L$.  The coefficients of the solution can be computed explicitly---the
    leading coefficient is negative the leading coefficient of $a$, and the
    others can be computed inductively.  So we can find a $g_{ij}$ that
    satisfies our constraints.  Since we can do this for all pairs $(i,j)$, we
    have constructed a $g$ such that $\f(g) = h$.  Note that if $h$ is in
    $U(\O_L)$, then by the same induction on $j$ we see that $g$ must be in
    $U(\O_L)$.  Therefore $\f^{-1}(U(\O_L)) \subset U(\O_L)$.

    To prove injectivity, assume that
    \begin{equation*}
      \f(g_1) = \f(g_2).
    \end{equation*}
    Then we have
    \begin{align*}
      g_1^{-1}\en\s(g_1)\enm &= g_2^{-1}\en\s(g_2)\enm \\
      \en\s(g_1 g_2^{-1})\enm &= g_1 g_2^{-1}
    \end{align*}
    But since $\nu$ is strictly dominant, the only way this can happen is if
    the valuations of all the off-diagonal entries of $g_1 g_2^{-1}$ are
    infinite, which means $g_1 = g_2$.  Thus $\f$ is a bijection.

    Finally, by (\ref{eq:explicit_f_nu}) and because $\nu$ is strictly
    dominant, if $g$ is in $U(\O_L)$, then so is $\f(g)$.  Since we already
    knew that $\f^{-1}(U(\O_L)) \subset U(\O_L)$, we see that $\f(U(\O_L)) =
    U(\O_L)$.
  \end{proof}

  \subsubsection{The Conjugation Action of $\A$}
  We start with the ``conjugation action'' of $\A$ on $\Xw \cap U_w I$, as
  defined at the beginning of Section~\ref{sec:U-orbits}.  If we define an
  action of $\A$ on $I'yI'\enm \cap U_1$ by $t \cdot g = (wtw^{-1}) g
  (wt^{-1}w^{-1})$, then the two actions are compatible with the isomorphism in
  (\ref{eq:f_nu_isomorphism}).  We will investigate the structure of subsets of
  $\Xw \cap U_w I$ on which the action of $\A$ can be extended to an action of
  $A(\O_L)$.

  \begin{defn}
    \label{def:U_N}
    For any $N \in \Z$, let $\mu = (N, 2N, \ldots, nN)$ and define $U_N :=
    \e^{-\mu}U(\O_L)\e^{\mu}$.
  \end{defn}
  Note that $U_N$ is a subgroup of $U_1$.
  \begin{prop}
    \label{prop:f_bijection_U_N}
    $\f$ can be viewed as a function $U_N \to U_N$, and this function is
    bijective.
  \end{prop}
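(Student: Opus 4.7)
The plan is to reduce the statement to Proposition~\ref{prop:f_nu_bijection} by exploiting the fact that $U_N$ is obtained from $U(\O_L)$ by conjugation with $\emu$. By Definition~\ref{def:U_N}, the map $\Phi \colon g' \mapsto \enmu g' \emu$ is a bijection $U(\O_L) \to U_N$. My central claim is the intertwining identity $\f \circ \Phi = \Phi \circ \f$; concretely, for every $g' \in U(\O_L)$,
\[ \f(\enmu g' \emu) = \enmu \f(g') \emu. \]

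To verify this identity I would expand the left-hand side directly. Writing $g = \enmu g' \emu$, we have $g^{-1} = \enmu (g')^{-1} \emu$, and $\s(g) = \enmu \s(g') \emu$ since $\s$ fixes $\e$. Therefore
\[ \f(g) = g^{-1} \en \s(g) \enm = \enmu (g')^{-1} \emu \en \enmu \s(g') \emu \enm. \]
Since $\emu$, $\enmu$, $\en$, $\enm$ are all diagonal, they commute pairwise. Collapsing $\emu \en \enmu = \en$ in the middle and moving the trailing $\emu$ past $\enm$ yields $\enmu (g')^{-1} \en \s(g') \enm \emu = \enmu \f(g') \emu$, as claimed. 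This is a routine calculation.

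Once the intertwining identity is established, the proposition is immediate. The identity says that $\f|_{U_N}$ is conjugate, via the bijection $\Phi$, to $\f|_{U(\O_L)}$. In particular $\f$ maps $U_N$ into $U_N$, and bijectivity on $U_N$ follows from bijectivity on $U(\O_L)$, which is Proposition~\ref{prop:f_nu_bijection}. There is essentially no obstacle here: the entire content of the proof is the diagonal-matrix commutation computation, which works precisely because both $\mu$ and $\nu$ are cocharacters valued in the same split maximal torus.
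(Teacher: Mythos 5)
Your argument is the same as the paper's: establish the intertwining identity $\f(\enmu g \emu) = \enmu \f(g) \emu$ and then transport the conclusion of Proposition~\ref{prop:f_nu_bijection} through the conjugation bijection $U(\O_L) \to U_N$. The paper simply asserts the identity while you verify it explicitly, but the structure and content of the proof are identical.
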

  \begin{proof}
    Since
    \begin{equation*}
      \f(\enmu g \emu) = \enmu \f(g) \emu,
    \end{equation*}
    and since $\f(U(\O_L)) = U(\O_L)$ by Proposition~\ref{prop:f_nu_bijection},
    we see that $\f(U_N) = U_N$.  Also by
    Proposition~\ref{prop:f_nu_bijection}, $\f$ is a bijection.
  \end{proof}

  \begin{defn}
    For any $m \in \Z$, $m > 0$, let $\phi_m: U(\O_L) \to U(\O_L/\e^m\O_L)$ be the
    map induced by the quotient map $\O_L \to \O_L/\e^m\O_L$.  Let $\mu$ be as in
    definition~\ref{def:U_N} and define $\Umn := \e^{-\mu} (\ker \phi_m)
    \e^{\mu}$.
  \end{defn}

  Since $\ker \phi_m$ is a normal subgroup of $U(\O_L)$, $\Umn$ is a normal
  subgroup of $U_N$.  Furthermore, since $\nu$ is dominant, $\en (\ker\phi_m)
  \enm \subset \ker\phi_m$, and hence
  \begin{equation*}
    \en (\Umn) \enm \subset \Umn.
  \end{equation*}

  \begin{prop}
    \label{prop:additive}
    If $g \in U_N$ and $u \in \Umn$, then $(gu)_{ij} - g_{ij} \in
    \e^{((j-i)N+m)}\O_L$ for all $(i, j)$.
  \end{prop}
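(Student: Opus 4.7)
The plan is to unpack the definitions of $U_N$ and $\Umn$ into explicit entrywise valuation bounds on $g$ and $u$, and then expand $(gu)_{ij}$ via the usual matrix-product formula.

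First, I would translate the definitions. Since $g = \enmu h \emu$ for some $h \in U(\O_L)$, the entry formula gives $g_{ij} = \e^{(j-i)N} h_{ij}$. Upper-triangularity of $h$ then yields $g_{ij} = 0$ for $i > j$, $g_{ii} = 1$, and $g_{ij} \in \e^{(j-i)N}\O_L$ for $i < j$. The same conjugation applied to $\ker \phi_m$ gives $u_{ij} = 0$ for $i > j$, $u_{ii} = 1$, and $u_{ij} \in \e^{(j-i)N+m}\O_L$ for $i < j$.

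Next, I would split on the relative position of $i$ and $j$. The cases $i > j$ (both sides zero) and $i = j$ (both sides equal one) are immediate, and the difference lies trivially in $\e^{(j-i)N+m}\O_L$. For $i < j$, upper-triangularity collapses the product to
\begin{equation*}
  (gu)_{ij} = \sum_{k=i}^{j} g_{ik} u_{kj} = g_{ij} + u_{ij} + \sum_{k=i+1}^{j-1} g_{ik} u_{kj},
\end{equation*}
using $g_{ii} = u_{jj} = 1$, so that
\begin{equation*}
  (gu)_{ij} - g_{ij} = u_{ij} + \sum_{k=i+1}^{j-1} g_{ik} u_{kj}.
\end{equation*}

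Finally, I would bound each summand using the valuation estimates from the first step. The leading term $u_{ij}$ already lies in $\e^{(j-i)N+m}\O_L$, while for each middle index $k$ we have $g_{ik} u_{kj} \in \e^{(k-i)N} \cdot \e^{(j-k)N+m}\O_L = \e^{(j-i)N+m}\O_L$, the key point being that the $N$-contributions telescope to $(j-i)N$ regardless of $k$. Adding everything shows that $(gu)_{ij} - g_{ij}$ lies in the required ideal.

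There is no genuine obstacle: the argument is essentially a bookkeeping exercise. The only thing to be careful about is tracking the exponents introduced by conjugation by $\emu$ so that the factor of $N$ in each product $g_{ik}u_{kj}$ indeed telescopes to $(j-i)N$, and checking that the summation range $i \le k \le j$ correctly accounts for the upper-triangularity of both factors.
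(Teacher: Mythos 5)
Your proof is correct, but organized somewhat differently from the paper's.

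The paper's proof avoids any explicit matrix-product computation: it conjugates by $\e^{\mu}$ to transport the statement to $U(\O_L)$ and $\ker\phi_m$, where the claim collapses to the observation that multiplying $\e^{\mu}g\e^{-\mu} \in U(\O_L)$ by $\e^{\mu}u\e^{-\mu} \in \ker\phi_m$ (a matrix congruent to the identity mod $\e^m$) changes entries only by elements of $\e^m\O_L$; the extra $\e^{(j-i)N}$ factor then reappears when conjugating back. You instead stay in $U_N$, translate the definitions into explicit entrywise valuation bounds ($g_{ij} \in \e^{(j-i)N}\O_L$, $u_{ij} \in \e^{(j-i)N+m}\O_L$ off the diagonal), expand the matrix product, and verify that the $N$-weights telescope term by term. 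Both arguments hinge on the same underlying fact; the paper's version is shorter because the conjugation reduces to the $N=0$ case where the congruence is immediate, while yours is more explicit and makes the telescoping of exponents visible, which may be the clearer route for a reader who wants to see why the exponent $(j-i)N+m$ is the right one.
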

  \begin{proof}
    \begin{equation*}
      \e^\mu gu \e^{-\mu} = (\e^\mu g \e^{-\mu}) (\e^\mu u \e^{-\mu})
    \end{equation*}
    and $(\e^\mu g \e^{-\mu}) \in U(\O_L)$ and $(\e^\mu u \e^{-\mu}) \in \ker
    \phi_m$.  Therefore 
    \begin{equation*}
      (\e^\mu gu \e^{-\mu})_{ij} - (\e^\mu g \e^{-\mu})_{ij} \in \e^m\O_L.
    \end{equation*}
    But $(gu)_{ij} = \e^{(j-i)N} (\e^\mu gu \e^{-\mu})_{ij}$ and $g_{ij} =
    \e^{(j-i)N} (\e^\mu g \e^{-\mu})_{ij}$, and the proposition follows.
  \end{proof}

  \begin{prop}
    \label{prop:well-def}
    If $g,g' \in U_N$ and $g^{-1}g' \in \Umn$, then $(\f(g))^{-1}\f(g') \in
    \Umn$.
  \end{prop}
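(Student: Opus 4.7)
The plan is to write $g' = gu$ with $u := g^{-1}g' \in \Umn$, compute $\f(g')$ in terms of $\f(g)$ and $u$, and then use the normality of $\Umn$ in $U_N$ together with the stability of $\Umn$ under both $\s$ and conjugation by $\en$ to conclude.

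First I would expand
\begin{equation*}
  \f(gu) = (gu)^{-1} \en \s(g)\s(u) \enm = u^{-1} \bigl( g^{-1}\en\s(g)\enm \bigr) \bigl( \en \s(u) \enm \bigr) = u^{-1} \f(g) \bigl( \en \s(u) \enm \bigr),
\end{equation*}
so that
\begin{equation*}
  (\f(g))^{-1} \f(g') = \bigl( (\f(g))^{-1} u^{-1} \f(g) \bigr) \cdot \bigl( \en \s(u) \enm \bigr).
\end{equation*}
It then suffices to show each of the two parenthesized factors lies in $\Umn$.

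For the second factor, note that $\s$ preserves $\ker\phi_m$ (since $\s(\e^m\O_L) = \e^m\O_L$) and commutes with conjugation by $\e^\mu$, so $\s(\Umn) = \Umn$; combined with the observation $\en \Umn \enm \subset \Umn$ made just before the proposition, this gives $\en\s(u)\enm \in \Umn$. For the first factor, Proposition~\ref{prop:f_bijection_U_N} tells us that $\f(g) \in U_N$, and $\Umn$ is a normal subgroup of $U_N$, so $(\f(g))^{-1} u^{-1} \f(g) \in \Umn$ as well. Multiplying the two factors, $(\f(g))^{-1}\f(g') \in \Umn$, which is what we wanted.

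The only potentially subtle step is the second factor: one must be careful that $\s$ genuinely preserves $\Umn$ (not merely $U_N$) and that conjugation by $\en$ does too, but both reduce to the already-recorded fact $\en(\ker\phi_m)\enm \subset \ker\phi_m$ after conjugating by $\e^\mu$. Everything else is a one-line manipulation, so I do not expect any real obstacle.
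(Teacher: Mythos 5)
Your proof is correct and follows essentially the same argument as the paper: both write $g' = gu$, factor $(\f(g))^{-1}\f(g') = \bigl((\f(g))^{-1}u^{-1}\f(g)\bigr)\cdot\en\s(u)\enm$, and then invoke normality of $\Umn$ in $U_N$ together with stability of $\Umn$ under $\s$ and under conjugation by $\en$. You are slightly more explicit than the paper in citing Proposition~\ref{prop:f_bijection_U_N} to justify $\f(g)\in U_N$ before using normality, but the structure is identical.
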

  \begin{proof}
    Let $u = g^{-1}g' \in \Umn$.  Then $g' = gu$ and we have:
    \begin{align*}
      (\f(g))^{-1}\f(g') &= (\f(g))^{-1}(g')^{-1}\en\s(g')\enm \\
      &= (\f(g))^{-1} \cdot u^{-1} \cdot g^{-1}\en\s(g)\s(u)\enm \\
      &= (\f(g))^{-1} \cdot u^{-1} \cdot \f(g)\cdot\en\s(u)\enm. \\
      \noalign{\noindent And since $\Umn$ is normal in $U_N$ }
      &= u'\cdot\en\s(u)\enm
    \end{align*}
    for some $u' \in \Umn$.  But $\s(\Umn) \subset \Umn$, and since $\nu$ is
    dominant we have $\en\s(u)\enm \in \Umn$.  Hence $u' \cdot \en\s(u)\enm \in
    \Umn$.
  \end{proof}

  \begin{defn}
    Let $\U = U_N/\Umn$. Given an equivalence class $g\Umn$, we define
    $\ff(g\Umn)$ to be $\f(g)\Umn$.  By Proposition~\ref{prop:well-def} this
    gives us a well-defined map $\ff : \U \to \U$.
  \end{defn}

  Note that, by Proposition~\ref{prop:additive}, $\U$ is an affine variety of
  dimension $n(n-1)m/2$ over $\k$.

  \begin{prop}
    \label{prop:isomorphism}
    $\ff : \U \to \U$ is an isomorphism of varieties.
  \end{prop}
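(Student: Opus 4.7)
The plan is to verify that both $\ff$ and its set-theoretic inverse are morphisms of varieties over $\k$. Combined with the bijectivity of $\f: U_N \to U_N$ from Proposition~\ref{prop:f_bijection_U_N} and the well-definedness of $\ff$ from Proposition~\ref{prop:well-def}, this will give the desired isomorphism.

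Explicit coordinates on $\U$ are supplied by Proposition~\ref{prop:additive}: for each pair $(i,j)$ with $i < j$, an element $g \in U_N$ has $g_{ij} \in \e^{(j-i)N}\O_L$, and the class of $g$ in $\U$ is determined by (and is freely determined by) the coefficients of $\e^{(j-i)N}, \e^{(j-i)N+1}, \ldots, \e^{(j-i)N+m-1}$ in each $g_{ij}$, giving $n(n-1)m/2$ coordinates in $\k$.

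I would then show $\ff$ is a morphism by induction on $j-i$, reducing (\ref{eq:explicit_f_nu}) modulo $\e^{(j-i)N + m}$. The term $\e^{\nu_i - \nu_j}\s(g_{ij})$ only requires $\s(g_{ij})$ to precision $\e^{(j-i)N + m - (\nu_i - \nu_j)}$, which is available from the coordinates of $g$ via the polynomial action $\s: a \mapsto a^q$ on the coefficients in $\k$ (the extra precision is guaranteed because $\nu_i - \nu_j > 0$). For each product $g_{ik}(\f(g))_{kj}$ in the sum, the valuation estimates $g_{ik} \in \e^{(k-i)N}\O_L$ and $\f(g)_{kj} \in \e^{(j-k)N}\O_L$ show that the product is determined modulo $\e^{(j-i)N + m}$ by the coordinates of $g_{ik}$ together with the already-known (since $j-k < j-i$) coordinates of $\f(g)_{kj}$. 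Thus each coordinate of $\ff$ is a polynomial in the coordinates of $g$.

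The main obstacle is showing $\ff^{-1}$ is a morphism, since $\ff$ involves the Frobenius $\s$ and one might worry the inverse requires $q$-th roots. Fortunately, the inductive construction in the proof of Proposition~\ref{prop:f_nu_bijection} reduces everything to solving equations of the shape $\e^r \s(y) - y = a$ with $r = \nu_i - \nu_j > 0$. Writing $y = \sum_l y_l \e^l$ and $a = \sum_l a_l \e^l$ and comparing coefficients of $\e^l$ gives the recursion $y_l = y_{l-r}^q - a_l$ (with $y_l = 0$ below the leading degree), so $y_l = -\sum_{k\ge 0} a_{l-kr}^{q^k}$. This expresses each $y_l$ as a polynomial in the $a_l$'s that only involves $q$-th powers, never $q$-th roots, and a finite truncation suffices to compute $g_{ij}$ to any desired precision modulo $\e^{(j-i)N+m}$. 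Substituting these polynomial solutions into the induction on $j$ from Proposition~\ref{prop:f_nu_bijection} yields polynomial formulas for the coordinates of $\ff^{-1}$ in the coordinates of its input, completing the proof.
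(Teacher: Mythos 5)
Your proof is correct, but it takes a genuinely different route from the paper's. The paper first establishes bijectivity of $\ff$ by a direct coset argument (showing that $\ff(g\Umn)=\ff(g'\Umn)$ forces $g'g^{-1}\in\Umn$ via Proposition~\ref{prop:additive} and strict dominance of $\nu$), and then, rather than constructing $\ff^{-1}$ as a morphism, it applies a differential criterion: choosing coordinates ordered by distance to the main diagonal, Proposition~\ref{prop:f_nu_dependence} makes $d\ff$ block-lower-triangular; because $d\s=0$ (the Frobenius $a\mapsto a^q$ has vanishing differential) the diagonal blocks are themselves lower-triangular with $-1$'s on the diagonal, so $\det d\ff=\pm1$ everywhere, and a bijective morphism with everywhere-invertible differential is an isomorphism. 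Your approach instead constructs $\ff^{-1}$ as an explicit polynomial map by observing that the inversion of $\e^r\s(y)-y=a$ yields $y_l=-\sum_{k\ge0}a_{l-kr}^{q^k}$, involving only $q$-th powers, never roots; once this descends to $\U$ (which your truncation-modulo-$\e^{(j-i)N+m}$ check establishes) it simultaneously proves injectivity of $\ff$ and exhibits the inverse as a morphism. The two arguments exploit the Frobenius in dual ways: the paper uses that $\s$ contributes nothing to the differential, you use that $\s$ contributes only polynomials to the inverse formula. Yours is more hands-on and avoids appealing to the bijective-étale-implies-isomorphism lemma, at the cost of a more elaborate coordinate computation; the paper's is shorter once one grants that lemma. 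One small remark on your framing: bijectivity of $\f$ plus well-definedness of $\ff$ does not by itself give injectivity of $\ff$ — what supplies injectivity in your argument is the existence of the well-defined left inverse you construct, so that construction is doing more work than the opening sentence suggests.
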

  \begin{proof}
    Since $\f$ is surjective, so is $\ff$. 

    To show injectivity, consider $g\Umn$ and $g'\Umn$ such that
    \begin{equation*}
      \ff(g\Umn) = \ff(g'\Umn).
    \end{equation*}
    This means that $\f(g)$ and $\f(g')$ differ by
    right-multiplication by some element of $\Umn$.  Call this element $u$.
    Then we have:
    \begin{align}
    \f(g) &= \f(g')u \nonumber \\
    g^{-1}\en\s(g)\enm &= (g')^{-1}\en\s(g')\enm u \nonumber \\
    g' g^{-1} &= \en\s(g')\enm u \en\s(g^{-1})\enm \nonumber \\
    \noalign{\noindent and, since $\Umn$ is normal in $U_N$ and
      $\en\s(g^{-1})\enm \in U_N$,} \nonumber \\
    \label{eq:differ_by_u}
    g' g^{-1} &= \en\s(g' g^{-1})\enm u'
    \end{align}
    for some $u' \in \Umn$.  But now we must have:
    \begin{equation*}
      (g'g^{-1})_{ij} = (\en\s(g' g^{-1})\enm u')_{ij}
    \end{equation*}
    and by Proposition~\ref{prop:additive}
    \begin{equation*}
      (g'g^{-1})_{ij} - \e^{\nu_i - \nu_j}\s((g'g^{-1})_{ij}) \in \e^{((j-i)N+m)}\O_L. \\
    \end{equation*}
    Since $\nu$ is strictly dominant this is only possible if $(g'g^{-1})_{ij}
    \in \e^{((j-i)N+m)}\O_L$ for all $i < j$, which means $g' g^{-1} \in
    \Umn$, which means that $g\Umn = g'\Umn$.  Thus $\ff$ is bijective.
    
    Now by Proposition~\ref{prop:f_nu_dependence}, an entry of $\f(g)$ only
    depends on the corresponding entry of $g$ and on the entries of $g$ that
    are closer to the main diagonal than itself.  Therefore, we can pick a
    basis for $\U$ (just listing bases for the entries starting with the ones
    near the main diagonal and working out) in which $d\ff$, which is the
    matrix of the differential of $\ff$, is block-lower-triangular.  And since
    $\nu$ is dominant and $d\s = 0$, the blocks on the diagonal are themselves
    lower-triangular, with all diagonal entries equal to -1, and hence $\det
    d\ff$ is everywhere nonzero (and in fact is $\pm 1$).  Therefore $d\ff$ is
    bijective at all points, and $\ff$ is an isomorphism.
  \end{proof}

  \begin{prop}
    \label{prop:T_subset_U_N}
    Let $T \subset U_1$ such that the set of valuations of entries of elements
    of $T$ is bounded below. Then we can pick $N$ such that $T \subset U_N$ and
    $\f^{-1}(T) \subset U_N$.
  \end{prop}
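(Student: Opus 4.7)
The plan is to reformulate membership in $U_N$ as a collection of lower bounds on valuations of matrix entries, and then verify that both $T$ and $\f^{-1}(T)$ satisfy such bounds for $N$ chosen sufficiently negative. Unpacking Definition~\ref{def:U_N}, conjugation by $\emu$ scales the $(i,j)$ entry by $\e^{(i-j)N}$, so $g \in U_N$ if and only if $g \in U_1$ and $\val(g_{ij}) \ge (j-i)N$ for all $i < j$. The hypothesis on $T$ supplies a uniform $V$ with $\val(g_{ij}) \ge V$ for all $g \in T$ and all $i,j$; hence $T \subset U_N$ for any $N$ satisfying $(j-i)N \le V$ for $1 \le i < j \le n$, which can always be arranged by taking $N$ sufficiently small.

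The real work is to establish the same kind of uniform lower bound for entries of elements of $\f^{-1}(T)$. For this I would start from the relation (\ref{eq:explicit_f_nu}), rewritten as
\begin{equation*}
  g_{ij} - \e^{\nu_i - \nu_j}\s(g_{ij}) = -(\f(g))_{ij} - \sum_{k=i+1}^{j-1} g_{ik}(\f(g))_{kj},
\end{equation*}
and observe that since $\nu$ is strictly dominant and $i < j$, the term $\e^{\nu_i - \nu_j}\s(g_{ij})$ has strictly larger valuation than $g_{ij}$. Consequently the valuation of the left-hand side is exactly $\val(g_{ij})$, so $\val(g_{ij})$ is bounded below by the minimum of the valuations of the terms on the right. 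I would then induct on $j - i$: the base case $j - i = 1$ yields $\val(g_{ij}) \ge V$ directly because the sum is empty and $\f(g) \in T$; in the inductive step each $g_{ik}$ appearing in the sum satisfies $k - i < j - i$ and so is controlled by the inductive hypothesis, while each $(\f(g))_{kj}$ still has valuation at least $V$. Since $j - i$ ranges only over $\{1, \dots, n-1\}$, this produces a uniform lower bound $W$, depending only on $V$ and $n$, for $\val(g_{ij})$ across all $g \in \f^{-1}(T)$ and all $i, j$.

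With both bounds in hand, one chooses $N$ small enough that $(j-i)N \le \min(V, W)$ for every $1 \le i < j \le n$; then $T \subset U_N$ and $\f^{-1}(T) \subset U_N$ simultaneously, which is the claim. The main obstacle is the inductive step above: it hinges on using strict dominance of $\nu$ to prevent cancellation between $g_{ij}$ and $\e^{\nu_i - \nu_j}\s(g_{ij})$, which is what allows us to invert (\ref{eq:explicit_f_nu}) and read off a valuation bound on $g_{ij}$ from bounds on the entries of $\f(g)$ and on the entries of $g$ closer to the diagonal. Everything else reduces to bookkeeping with these bounds.
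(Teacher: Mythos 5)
Your argument is correct, but it takes a genuinely different (and longer) route from the one in the paper. The paper's proof is essentially two lines: pick a negative integer $N$ below the uniform lower bound $V$ on valuations of entries of $T$, observe that then $T \subset U_N$ (since $(j-i)N \le N < V$ for all $i<j$), and cite Proposition~\ref{prop:f_bijection_U_N} — which has already established that $\f$ restricts to a bijection $U_N \to U_N$ — to conclude $\f^{-1}(T) \subset \f^{-1}(U_N) = U_N$. You instead bypass that proposition and re-derive a uniform lower bound $W$ on the entries of $\f^{-1}(T)$ directly from the recursion (\ref{eq:explicit_f_nu}), inducting on $j-i$ and using strict dominance of $\nu$ to control the leading term. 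That induction is sound and yields what you claim, but it is essentially the same inductive argument already carried out inside the proof of Proposition~\ref{prop:f_nu_bijection} (where $\f^{-1}(U(\O_L)) \subset U(\O_L)$ is shown by inducting on $j$), so you are in effect inlining a step the paper has already packaged and conjugated to obtain Proposition~\ref{prop:f_bijection_U_N}. What your version buys is an explicit dependence of the bound $W$ on $V$ and $n$, and a proof that does not need to reuse that earlier machinery; what it costs is redundancy, since the cleaner structural fact — that $\f$ preserves each $U_N$ — is available and does all the work at once.
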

  \begin{proof}
    Let $N$ be any negative integer smaller than the lower bound of the
    valuations of entries of elements of $T$.  Then $T \subset U_N$ and hence,
    by Proposition~\ref{prop:f_bijection_U_N}, $\f^{-1}(T) \subset U_N$.
  \end{proof}
  \begin{corr}
    \label{corr:T_subset_U_N}
    The $N$ can be chosen such that $U_1 \cap I' \subset U_N$ as well.
  \end{corr}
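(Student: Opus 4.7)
The plan is to reduce the corollary to a direct application of Proposition~\ref{prop:T_subset_U_N} by enlarging $T$. Specifically, I would apply the proposition to the set $T' = T \cup (U_1 \cap I')$ in place of $T$. Provided $T'$ has valuations of entries bounded below, the proposition then yields an $N$ with $T' \subset U_N$ and $\f^{-1}(T') \subset U_N$; since $T \subset T'$ and $U_1 \cap I' \subset T'$, this $N$ works.

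The single fact I need to check is that the valuations of entries of elements of $U_1 \cap I'$ are bounded below. Here $I' = wIw^{-1}$ with $w \in W$, and under our identification $w$ is a monomial matrix with unit entries. Conjugation by $w$ merely permutes the rows and columns of the Iwahori form~(\ref{eq:Iwahori_form}), so every entry of every element of $I'$ still lies in $\O_L$. Hence every entry of every element of $U_1 \cap I'$ has valuation at least $0$, and $T'$ inherits a lower bound on valuations from $T$ (taking the minimum of that bound with $0$).

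The proof of the proposition shows that any negative integer smaller than the common lower bound works, so in practice one takes $N$ to be any negative integer strictly less than the minimum of the valuation bound for $T$ and $0$. There is no real obstacle here; the content of the corollary is just the observation that $U_1 \cap I'$ satisfies the hypothesis of Proposition~\ref{prop:T_subset_U_N} and so may be absorbed into $T$ without changing anything.
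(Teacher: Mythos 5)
Your proof is correct and takes essentially the same approach as the paper: the key observation in both is that $U_1 \cap I'$ has entry valuations bounded below (in fact by $0$, since conjugating the Iwahori form by the permutation matrix $w \in W$ keeps all entries in $\O_L$), after which one chooses $N$ small enough to accommodate both $T$ and $U_1 \cap I'$. Your packaging via $T' = T \cup (U_1 \cap I')$ is a slightly more formal way of saying what the paper says directly.
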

  \begin{proof}
    The valuations of all entries of elements of $U_1 \cap I'$ are bounded
    below by the difference between the smallest and the largest valuations of
    entries of $w$.  So $N$ just needs to be selected to also be smaller than
    this difference.
  \end{proof}

  \begin{prop}
    \label{prop:f_equals_fbar}
    Let $T \subset U_1$ is as in Proposition~\ref{prop:T_subset_U_N} and pick
    an $N$ per that proposition.  If there is an $m$ such that $T\Umn \subset
    T$ then $\ff^{-1}(T/\Umn) = \f^{-1}(T)/\Umn$, where both sides are viewed
    as subsets of $\U$.
  \end{prop}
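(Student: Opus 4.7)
The plan is to verify the equality of the two subsets of $\U$ by a direct set-theoretic argument, leaning on Proposition~\ref{prop:well-def} to handle the compatibility of $\f$ with passage to the quotient and on the hypothesis $T\Umn \subset T$ to ensure everything is saturated under $\Umn$-cosets.

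First I would check that both sides make sense as subsets of $\U = U_N/\Umn$. The right-hand side $T/\Umn$ is well-defined because $T \subset U_N$ and $T\Umn \subset T$ means $T$ is a union of $\Umn$-cosets. For the left-hand side $\f^{-1}(T)/\Umn$, I need $\f^{-1}(T) \subset U_N$, which is exactly the condition built into the choice of $N$ in Proposition~\ref{prop:T_subset_U_N}, and I need $\f^{-1}(T)$ to be a union of $\Umn$-cosets. To verify the latter, take $g \in \f^{-1}(T)$ and $u \in \Umn$; by Proposition~\ref{prop:well-def} applied to $g$ and $g' = gu$, we have $(\f(g))^{-1}\f(gu) \in \Umn$, so $\f(gu) \in \f(g)\Umn \subset T\Umn \subset T$, and hence $gu \in \f^{-1}(T)$.

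Next I would verify the two inclusions. For the forward inclusion $\f^{-1}(T)/\Umn \subset \ff^{-1}(T/\Umn)$, take $g\Umn$ with $g \in \f^{-1}(T)$; then by definition $\ff(g\Umn) = \f(g)\Umn$, and since $\f(g) \in T$ and $T$ is saturated by $\Umn$, the coset $\f(g)\Umn$ lies in $T/\Umn$. For the reverse inclusion, suppose $g\Umn \in \ff^{-1}(T/\Umn)$, so that $\f(g)\Umn \in T/\Umn$; unpacking this means $\f(g)\Umn \subset T$ as a subset of $U_N$, and in particular $\f(g) \in T$, so $g \in \f^{-1}(T)$ and $g\Umn \in \f^{-1}(T)/\Umn$.

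There is no real obstacle here --- the only subtle point is confirming that $\f^{-1}(T)$ is $\Umn$-saturated, and that is exactly what Proposition~\ref{prop:well-def} combined with the hypothesis $T\Umn \subset T$ gives us. Everything else is formal manipulation of cosets.
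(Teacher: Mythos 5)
Your proof is correct and follows essentially the same approach as the paper: a direct two-inclusion argument using Proposition~\ref{prop:well-def} and the hypothesis $T\Umn \subset T$. The only difference is that you explicitly verify that $\f^{-1}(T)$ is $\Umn$-saturated before proceeding, a point the paper's proof handles implicitly by working with the nonempty-intersection characterization of membership in the image; both formulations are equivalent given the hypotheses, so this is a harmless (and arguably clarifying) elaboration.
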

  \begin{proof}
    By Proposition~\ref{prop:T_subset_U_N}, the sets $T/\Umn$ and
    $\f^{-1}(T)/\Umn$ are well-defined subsets of $\U$.

    If an element of $\U$ is in $\f^{-1}(T)/\Umn$, we can pick a representative
    $g$ for it such that $\f(g) \in T$.  But then $\ff(g\Umn) = \f(g)\Umn$ has
    nonempty intersection with $T$, so $g\Umn \in \ff^{-1}(T/\Umn)$ as desired.

    Conversely, say an element of $\U$ is in $\ff^{-1}(T/\Umn)$.  This means
    that for any representative $g$ we have $\f(g)\Umn \cap T \ne \emptyset$.
    Hence, $\f(g) \in T \Umn \subset T$.  So $g \in \f^{-1}(T)$ and our
    original element of $\U$ is in $\f^{-1}(T)/\Umn$.
  \end{proof}

  \begin{prop}
    \label{prop:trivial-action}
    Assume we have a set $T$ as in Proposition~\ref{prop:f_equals_fbar} and can
    pick $m$ such that it satisfies the conditions of that proposition and such
    that $\Umn \subset I'$.  Assume further that $\f^{-1}(T)$ is preserved
    under right-multiplication by $U_1 \cap I'$, that $\f^{-1}(T)/(U_1 \cap
    I')$ is a variety, that $\A$ acts on $T$ via the action $t \cdot g =
    (wtw^{-1}) g (wt^{-1}w^{-1})$, and that the action of $\A$ on $T$ can be
    extended to an action of $A(\O_L)$ on $T$ given by the same formula.

    Then $\A$ acts on $\f^{-1}(T)/(U_1 \cap I')$, with the action given by the
    same formula as the action on $T$, and the resulting representation of $\A$
    on the Borel-Moore homology of $\f^{-1}(T)/(U_1 \cap I')$ is trivial.
  \end{prop}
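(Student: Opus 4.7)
My plan is to transport the given $A(\O_L)$-action on $T$ to a compatible algebraic action on the variety $\f^{-1}(T)/\Umn$ via the isomorphism $\ff$ of Proposition~\ref{prop:isomorphism}, use connectedness of the corresponding finite-level quotient of $A(\O_L)$ to force triviality on Borel-Moore homology, and then transfer this triviality down to $\f^{-1}(T)/(U_1 \cap I')$ using the unipotent principal bundle quotient.

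First I would verify that the formula $g \mapsto (wtw^{-1})g(wt^{-1}w^{-1})$ does give an action of $\A$ on $\f^{-1}(T)/(U_1\cap I')$. A short calculation using $\s(t)=t$, $\s(w)=w$, and commutativity of $A(L)$ (with which $\en$ also commutes) yields
\begin{equation*}
  \f\bigl((wtw^{-1}) g (wt^{-1}w^{-1})\bigr) = (wtw^{-1}) \f(g) (wt^{-1}w^{-1}),
\end{equation*}
which lies in $T$ because $T$ is $\A$-stable; hence the action preserves $\f^{-1}(T)$. Since $wtw^{-1}\in A$ normalizes $U_1$ and, using $t \in I$, also normalizes $I' = wIw^{-1}$, the action descends to $\f^{-1}(T)/(U_1\cap I')$.

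Next, Proposition~\ref{prop:f_equals_fbar} together with the isomorphism $\ff$ from Proposition~\ref{prop:isomorphism} identifies $\f^{-1}(T)/\Umn$ with $T/\Umn$ as varieties. Because $A(\O_L)$ normalizes $\ker\phi_m$ and hence $\Umn$, the given $A(\O_L)$-action on $T$ descends to $T/\Umn$; transporting it across $\ff^{-1}$ gives an algebraic $A(\O_L)$-action on $\f^{-1}(T)/\Umn$ whose restriction to $\A$ coincides with the $\A$-conjugation action, by the identity above. This action factors through the connected algebraic group $A(\O_L/\e^r\O_L)$ for $r$ sufficiently large, because the action on each entry modulo $\Umn$ depends only on diagonal entries of $t$ modulo bounded powers of $\e$. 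Since a connected algebraic group acts trivially on Borel-Moore homology, $\A$ acts trivially on $H_*^{BM}(\f^{-1}(T)/\Umn)$.

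Finally, the $\A$-equivariant quotient map $\f^{-1}(T)/\Umn \to \f^{-1}(T)/(U_1\cap I')$ is a principal bundle for the connected unipotent group $(U_1\cap I')/\Umn$, and such bundles induce an $\A$-equivariant isomorphism between the Borel-Moore homologies up to a shift in degrees (obtained by filtering the structure group by $\mathbb{G}_a$-subquotients and iteratively applying homotopy invariance for $\mathbb{A}^1$-bundles). Hence $\A$ acts trivially on $H_*^{BM}(\f^{-1}(T)/(U_1\cap I'))$ as well. The main technical obstacle I expect is making this last step precise: one must check that the principal bundle is at least \'etale-locally trivial on a suitably nice base so that the iterated $\mathbb{A}^1$-bundle argument applies cleanly, and verify that the $\A$-equivariance is preserved at each stage of the filtration.
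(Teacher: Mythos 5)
Your proposal is correct and follows essentially the same path as the paper's own proof: verify $\A$-equivariance of $\f$ and the quotient maps, pass through $\ff$ to $T/\Umn$ (where the action extends to $A(\O_L)$ and connectedness forces triviality), and then handle the quotient by the affine-space group $(U_1\cap I')/\Umn$ as a degree shift on Borel–Moore homology. The paper is terser at the last step — it simply invokes Proposition~\ref{prop:additive} to note that $(U_N\cap I')/\Umn$ is a finite-dimensional affine space and asserts the shift — but the content matches your filtered-$\mathbb{G}_a$-bundle justification; the concern you flag is thus already resolved in the paper's setting.
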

  \begin{proof}
    First, we note that the action of $\A$ described above is compatible with
    $\f$ and clearly preserves both $U_1 \cap I'$ and $\Umn$.  Since it is
    compatible with $\f$ and preserves $\Umn$, it is compatible with $\ff$ and
    descends to an action on $T/\Umn$.
  
    By Corollary~\ref{corr:T_subset_U_N} we can pick $N$ small enough that $U_1
    \cap I' \subset U_N$.  Then
    \begin{align*}
      \f^{-1}(T)/(U_1 \cap I') &= \f^{-1}(T)/(U_N \cap I') \\
      &=  \biggl(\bigl(\f^{-1}(T)/\Umn \biggr)/\biggl((U_N
      \cap I') / \Umn\biggr) \\
      \noalign{\noindent and by Proposition~\ref{prop:f_equals_fbar}}
      & = \biggl(\ff^{-1}\bigl(T/\Umn\bigr)\biggr)/\biggl((U_N
      \cap I') / \Umn\biggr).
    \end{align*}
    Now $(U_N \cap I') / \Umn$ is a finite-dimensional affine space by
    Proposition~\ref{prop:additive}, and the action of $\A$ preserves all the
    quotients involved, so the Borel-Moore homology of $\f^{-1}(T)/(U_1 \cap
    I')$ is the same as that of $\ff^{-1}(T/\Umn)$ but shifted in degree.
    Since $\ff$ is an isomorphism which is compatible with the action of $\A$,
    it is enough to consider the representation of $\A$ on the homology of
    $T/\Umn$ induced by the action of $\A$ on $T/\Umn$.  But on $T$ we can
    extend the action of $\A$ to an action of $A(\O_L)$, and this action also
    descends to $T/\Umn$.  Since $A(\O_L)$ is connected, the representation on
    the homology must be trivial.
  \end{proof}

  \subsection{Some Triviality Results}
  \label{sec:triviality-results}
  
  \begin{theorem}
    \label{thm:trivial-on-U-orbit}
    If $\nu$ is strictly dominant and $w \in W$, then the representation of
    $\A$ on the Borel-Moore homology of $\X \cap U_1 wI$ induced by the
    left-multiplication action of $\A$ on the set $\X \cap U_1 wI$ is trivial.
  \end{theorem}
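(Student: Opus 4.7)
My plan is to translate the statement, via the chain of isomorphisms developed in Section~\ref{sec:U-orbits}, into a direct application of Proposition~\ref{prop:trivial-action} to the set $T = I'yI'\enm \cap U_1$, where $y = wxw^{-1}$ and $I' = wIw^{-1}$.

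First, I will use the isomorphism $\X \cap U_1 wI \cong \Xw \cap U_w I$ given by left multiplication by $w^{-1}$, recalled at the start of Section~\ref{sec:U-orbits}. Under this isomorphism the left-multiplication action of $t \in \A$ becomes the composition of conjugation by $w$ with conjugation by $t$; since conjugation by $w$ is an automorphism of $\A$, it suffices to show that the conjugation action of $\A$ on $\Xw \cap U_w I$ induces the trivial representation on Borel-Moore homology. By the isomorphism~(\ref{eq:f_nu_isomorphism}), which intertwines the conjugation action on $\Xw \cap U_w I$ with the action $t \cdot g = (wtw^{-1}) g (wt^{-1}w^{-1})$ on $\f^{-1}(T)/(U_1 \cap I')$, the question reduces to showing triviality of the latter action on Borel-Moore homology.

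This is exactly the conclusion of Proposition~\ref{prop:trivial-action}, so it remains to verify its hypotheses. The valuations of entries of elements of $T$ are bounded below since $I'yI'$ is a bounded subset of $G(L)$; this lets me invoke Corollary~\ref{corr:T_subset_U_N} to fix an $N$ with $T$, $\f^{-1}(T)$, and $U_1 \cap I'$ all contained in $U_N$. Preservation of $\f^{-1}(T)$ under right-multiplication by $U_1 \cap I'$ follows from the identity $\f(gu) = u^{-1}\f(g)\en\s(u)\enm$: writing $\f(g) = \alpha y \beta \enm$ with $\alpha,\beta \in I'$, the factors $\en\enm$ cancel and one obtains $\f(gu) = u^{-1}\alpha y \beta \s(u) \enm$, which is in $I'yI'\enm$ because $\s$ preserves $I'$ and $U_1$. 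That $\A$ acts on $T$ by the stated formula and that this action extends to $A(\O_L)$ follows from the facts that $W$ normalizes $A$ so $wA(\O_L)w^{-1} = A(\O_L) \subset I'$, that diagonal matrices commute with $\enm$ and preserve the upper-triangular unipotent structure, and that conjugating the monomial matrix $y$ by a diagonal element of $A(\O_L)$ merely rescales its nonzero entries by units.

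The only delicate point is choosing $m$ large enough that simultaneously $\Umn \subset I'$ and $T\Umn \subset T$. The second condition reduces, after commuting $\Umn$ past $\enm$, to $\enm \Umn \en \subset I'$, so both conditions amount to lower bounds on the valuations of $(i,j)$ entries, namely at least $(j-i)N + m$ for $\Umn$ and at least $(j-i)N + m + \nu_j - \nu_i$ for $\enm \Umn \en$. Since $(j-i)N$ is very negative (as $N$ was chosen very negative) and $\nu_j - \nu_i < 0$ for $i < j$ by strict dominance, taking $m$ large enough dominates both negative contributions, and finiteness of the index set makes a uniform choice possible. Once these hypotheses are confirmed, Proposition~\ref{prop:trivial-action} delivers the result.
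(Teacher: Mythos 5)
Your proof is correct and follows the paper's own argument: reduce via the isomorphism \eqref{eq:f_nu_isomorphism} to the twisted conjugation action on $\f^{-1}(T)/(U_1 \cap I')$ with $T = I'yI'\enm \cap U_1$, then verify the hypotheses of Proposition~\ref{prop:trivial-action}. You fill in several details the paper elides (the explicit computation $\f(gu)=u^{-1}\f(g)\en\s(u)\enm$ for right-stability of $\f^{-1}(T)$, and the valuation bookkeeping in choosing $m$), but the route is the same.
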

  \begin{proof}
    Since $\A \subset I$, the left-multiplication action and the conjugation
    action coincide.  By (\ref{eq:f_nu_isomorphism}),
    \begin{equation*}
      \X \cap U_1 wI \cong \f^{-1}(I'yI'\enm \cap U_1)/(U_1 \cap I')
    \end{equation*}
    and the isomorphism sends the conjugation action on $\X \cap U_1wI$ to
    exactly the action described in the conditions of
    Proposition~\ref{prop:trivial-action} on $\f^{-1}(I'yI'\enm \cap U_N)/(U_N
    \cap I')$.  In this case, $T = I'yI'\enm \cap U_1$.  The valuations of the
    entries of elements of $T$ are bounded below, so
    Proposition~\ref{prop:T_subset_U_N} applies.  If we take $m$ large enough
    that $\enm\Umn\en \subset I'$, then
    \begin{align*}
      T\Umn &\subset (I'yI'\enm \en I' \enm) \cap U_N \\
      &= (I'yI'\enm) \cap U_N \\
      &= T
    \end{align*}
    so Proposition~\ref{prop:f_equals_fbar} applies.  $f^{-1}(T)$ is preserved
    under right-multiplication by $U_1 \cap I'$, $\A$ acts on $T$ by the
    requisite twisted conjugation action, and this action on $T$ can clearly be
    extended to an action of $A(\O_L)$.  So
    Proposition~\ref{prop:trivial-action} applies.
  \end{proof}
  \begin{theorem}
    \label{thm:trivial-on-a-piece}
    Assume that $\nu$ is strictly dominant.  Let $w = 1$ or the longest element
    of $W$.  Pick an integer $\delta$ which is nonpositive, and negative if $w
    = 1$.  Let $Y_\delta$ be the subset of $\X \cap U_1 w I$ which consists of
    elements which can be represented by an element of $U_1w$ which has a
    $(1,w(2))$ entry whose valuation is $\delta$.  Then $\A$ acts on $Y_\delta$
    by left-multiplication, and the representation of $\A$ induced on the
    Borel-Moore homology of $Y_\delta$ by this action is trivial.
  \end{theorem}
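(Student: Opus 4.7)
The plan is to mimic the proof of Theorem~\ref{thm:trivial-on-U-orbit}: we isolate a subset $T \subset I'yI'\enm \cap U_1$ that corresponds to $Y_\delta$ under the isomorphism developed in Section~\ref{sec:U-orbits}, and then apply Proposition~\ref{prop:trivial-action} to this $T$.

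First I would trace through the isomorphism $\X \cap U_1 wI \cong \f^{-1}(I'yI'\enm \cap U_1)/(U_1 \cap I')$: a coset $uwI$ corresponds to the class of $u \in U_1$ in the quotient. For both $w = 1$ and $w$ the longest element, a direct computation shows that the $(1, w(2))$ entry of $uw$ is just $u_{1,2}$. Combining this with (\ref{eq:explicit_f_nu}) and the strict dominance of $\nu$ gives
\[
(\f(u))_{1,2} = \e^{\nu_1-\nu_2}\s(u_{1,2}) - u_{1,2},
\]
so $\val((\f(u))_{1,2}) = \val(u_{1,2})$ and the defining valuation condition for $Y_\delta$ can be read off either $u$ or $\f(u)$. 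The natural choice is
\[
T := \{h \in I'yI'\enm \cap U_1 : \val(h_{1,2}) = \delta\},
\]
so that $Y_\delta$ is identified with $\f^{-1}(T)/(U_1 \cap I')$.

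The bulk of the work is verifying the hypotheses of Proposition~\ref{prop:trivial-action} for this $T$. Boundedness of valuations of entries of elements of $T$ is inherited from $I'yI'\enm \cap U_1$, and the twisted conjugation action of $\A$ (as well as its extension to $A(\O_L)$) preserves $\val(h_{1,2})$ since it merely rescales that entry by a diagonal unit. The containment $T\Umn \subset T$ reduces, via Proposition~\ref{prop:additive}, to taking $m$ large enough that $\e^{N+m}\O_L$ cannot move a valuation equal to $\delta$; $m$ is chosen large enough simultaneously to guarantee $\Umn \subset I'$ and to make the outer containment $T\Umn \subset I'yI'\enm \cap U_1$ work exactly as in Theorem~\ref{thm:trivial-on-U-orbit}.

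The main subtlety, and the place where the sign hypotheses on $\delta$ enter, is the right-multiplication stability of $\f^{-1}(T)$ under $U_1 \cap I'$: for $n \in U_1 \cap I'$ one has $(un)_{1,2} = u_{1,2} + n_{1,2}$, and one wants $\val((un)_{1,2}) = \val(u_{1,2}) = \delta$. For $w = 1$ the entry $n_{1,2}$ lies in $\O_L$, so $\val(n_{1,2}) \ge 0$ and we need $\delta < 0$; for $w$ the longest element, $I' = wIw^{-1}$ forces $n_{1,2} \in \e\O_L$, so $\val(n_{1,2}) \ge 1$ and $\delta \le 0$ suffices. These are precisely the hypotheses imposed in the theorem. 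With all conditions checked, Proposition~\ref{prop:trivial-action} delivers the triviality of the representation of $\A$ on the Borel-Moore homology of $Y_\delta$.
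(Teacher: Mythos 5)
Your proposal is correct and follows essentially the same route as the paper's proof: both identify $Y_\delta$ with $\f^{-1}(T)/(U_1\cap I')$ where $T$ is the locus of $I'yI'\enm\cap U_1$ with $(1,2)$-entry of valuation $\delta$ (the paper calls this $Z_\delta$), use (\ref{eq:explicit_f_nu}) to see that $\f$ preserves the valuation of the $(1,2)$ entry, and then check the hypotheses of Proposition~\ref{prop:trivial-action}, with Proposition~\ref{prop:additive} handling $T\Umn\subset T$ and the sign conditions on $\delta$ guaranteeing stability under right-multiplication by $U_1\cap I'$. Your explanation of exactly why the sign hypotheses on $\delta$ split the two cases of $w$ is in fact slightly more explicit than the paper's, which merely asserts the stability.
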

  \begin{proof}
    Left-multiplication by elements of $\A$ does not change the valuations of
    entries, so $\A$ acts by left-multiplication on $Y_\delta$.  Since $\A
    \subset I$, this action coincides with the conjugation action of $\A$ on
    $Y_\delta$.  By (\ref{eq:explicit_f_nu}), since $\nu$ is strictly dominant,
    we see that $\val(\f(g)_{1,2}) = \val(g_{1,2})$.  In particular, $Y_\delta
    \cong \f^{-1}(Z_\delta)/(U_1 \cap I')$, where $Z_\delta$ is the subset of
    $I'yI'\enm \cap U_1$ which consists of elements that can be represented by
    a matrix which has an entry of valuation $\delta$ in the $(1,2)$ position,
    and the isomorphism sends the conjugation action on $Y_\delta$ to the
    action described in the conditions of Proposition~\ref{prop:trivial-action}
    on $\f^{-1}(Z_\delta)/(U_1 \cap I')$.  In this case, $T = Z_\delta$.
    $Z_\delta$ satisfies the conditions of Corollary~\ref{prop:T_subset_U_N},
    since it is a subset of a set that satisfies those conditions.  If we take
    $m$ such that $\enm\Umn\en \subset I'$ and $N + m > \delta$, then by the
    argument for Theorem~\ref{thm:trivial-on-U-orbit} right-action by $\Umn$
    preserves $I'yI'\enm \cap U_1$, and by Proposition~\ref{prop:additive} this
    action preserves $Z_\delta$.  So $Z_\delta$ satisfies the conditions of
    Proposition~\ref{prop:f_equals_fbar}.  Since $\delta \le 0$ and $\delta <
    0$ if $w = 1$, $\f^{-1}(Z_\delta)$ is preserved by right-multiplication by
    $U_1 \cap I'$.  The twisted conjugation actions of both $\A$ and $A(\O_L)$
    preserve $Z_\delta$, so Proposition~\ref{prop:trivial-action} applies.
  \end{proof}

  Now we prove a result that we will be able to apply to most cases when $G =
  SL_3$.
  \begin{prop}
    \label{prop:Trivial-For-Closed-Transitive-Set}
    Assume that $\X$ is a disjoint union of subsets which have the following
    properties:
    \begin{itemize}
      \item Each subset is preserved by the left-multiplication action of
	$\A$.
      \item One of the subsets is closed in $\X$.
      \item The induced representation of $\A$ on the Borel-Moore homology of
        one of the subsets is trivial.
      \item $A(F)/\A$ acts simply transitively on the collection of subsets.
    \end{itemize}
    Then the representation of $\A$ on the Borel-Moore homology of $\X$ which
    is induced by the left-multiplication action of $\A$ on $\X$ is trivial.
    Furthermore, the representation of $A(F)$ on the Borel-Moore homology of
    $\X$ simply permutes the homology spaces of the subsets in our
    decomposition.
  \end{prop}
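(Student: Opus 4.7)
The plan is to promote the set-theoretic partition $\X = \bigsqcup_j S_j$ to a decomposition of $\X$ into clopen pieces, from which the triviality statement will follow because Borel--Moore homology of a locally finite clopen union is the direct sum of the Borel--Moore homologies of the pieces. Fix representatives $a_j \in A(F)$ of the cosets in $A(F)/\A$ such that $a_j \cdot S_0 = S_j$, where $S_0$ denotes the distinguished subset which is closed in $\X$ and on whose Borel--Moore homology $\A$ acts trivially.

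First I would observe that every $S_j$ is closed in $\X$: left multiplication by $a_j$ is an automorphism of $\X$ as a variety, and it carries the closed subset $S_0$ onto $S_j$. Next, because $A(F)$ is abelian and contains $\A$, left multiplication by $a_j$ commutes with the $\A$-action, so this automorphism is an $\A$-equivariant isomorphism $S_0 \cong S_j$; hence $\A$ acts trivially on $H_*^{BM}(S_j)$ for every $j$.

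The crux is then to show that each $S_j$ is also open in $\X$. Its complement is $\bigsqcup_{k \ne j} S_k$, a union of closed subsets, which is closed as soon as the family $\{S_k\}$ is locally finite in $\X$. This local finiteness should follow from the ind-scheme structure of $\X \subset G(L)/I$: a quasi-compact subset of $\X$ is contained in finitely many Iwahori double cosets, while left translation by $\e^{(i_1, \ldots, i_{n-1}, -i_1-\cdots-i_{n-1})}$ representing a nonzero coset in $A(F)/\A \cong \Z^{n-1}$ moves any fixed quasi-compact piece off itself for all but finitely many tuples $(i_1, \ldots, i_{n-1})$. Granting this, every $S_j$ is clopen in $\X$, whence
\begin{equation*}
  H_*^{BM}(\X) \;\cong\; \bigoplus_{j \in A(F)/\A} H_*^{BM}(S_j)
\end{equation*}
as $\A$-representations, and triviality of each summand immediately forces triviality of the left-hand side.

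Finally, for the permutation statement, the $A(F)$-action on $\X$ preserves the decomposition above, and for any $a \in A(F)$ left multiplication by $a$ sends $S_j$ homeomorphically onto $S_{j+[a]}$ and hence sends the $j$-th homology summand isomorphically onto the $(j+[a])$-th. The induced $A(F)$-representation on $H_*^{BM}(\X)$ therefore factors through $A(F)/\A$ and simply permutes the homology spaces of the pieces, as claimed. The only non-formal step I anticipate is the local finiteness of $\{S_j\}$; once that is verified, the rest reduces to standard properties of Borel--Moore homology of clopen disjoint unions together with equivariance under the abelian group $A(F)$.
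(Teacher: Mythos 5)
Your argument follows the same overall route as the paper's: transport the closedness of $S_0$ and the triviality of the $\A$-action on $H_*^{BM}(S_0)$ to every $S_j$ via the commuting $A(F)$-translation, then split the Borel--Moore homology of $\X$ over the pieces. The place where you go beyond the paper is in justifying that split. The paper simply asserts that a disjoint union of closed subsets has Borel--Moore homology equal to the coproduct of the pieces, which is not true for an arbitrary family of closed subsets (decompose an interval into its points); one needs the family to be locally finite, equivalently that each piece be clopen, and you correctly identify this gap and supply the missing input from the ind-scheme structure. Your local-finiteness sketch could be stated a bit more squarely as: a quasi-compact subset of the affine flag variety lies in finitely many Schubert cells $I\w I/I$, and each such cell meets only finitely many of the semi-infinite orbits $U_1\w' I/I$ that contain the pieces $S_j$, so only finitely many $S_j$ meet any fixed quasi-compact set. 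One small slip: for an infinite clopen disjoint union, $H_*^{BM}$ is the \emph{product} of the pieces' Borel--Moore homologies rather than the direct sum, since $H_c^*$ is the direct sum and $H_*^{BM}$ is its dual; neither the triviality nor the permutation conclusion is affected by this.
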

  \begin{proof}
    Since left-multiplication by $\e^\mu$ commutes with left-multiplication by
    elements of $\A$, and since $A(F)/\A$ acts simply transitively on our
    subsets, the representation of $\A$ on the Borel-Moore homology of each
    subset is trivial.  Since one of the subsets is closed in $\X$, and all the
    subsets are translates of each other by elements of $A(F)$, all the
    subsets are closed in $\X$.  Now $\X$ is a disjoint union of closed
    subsets, so the Borel-Moore homology of $\X$ is just the coproduct of the
    Borel-Moore homologies of the pieces.  Therefore, $\A$ also acts trivially
    on the Borel-Moore homology of $\X$.

    Finally, $A(F)/\A$ acts simply transitively on the closed pieces we have
    decomposed $\X$ into.  Therefore the representation of $A(F)$ on the
    Borel-Moore homology of $\X$ just permutes the homology spaces of the
    pieces.
  \end{proof}
  
  \begin{theorem}
    \label{thm:Triviality-For-One-Closed-Intersection}
    Assume that $\nu$ is strictly dominant, that $\X \cap U_1 wI$ is empty for
    all but one $w \in W$ and that the one nonempty intersection is closed in
    $\X$.  Then the sets $\X \cap U_1 \w I$ for $\w \in \W$ satisfy the
    conditions of Proposition~\ref{prop:Trivial-For-Closed-Transitive-Set}, and
    hence the conclusion of that proposition follows.
  \end{theorem}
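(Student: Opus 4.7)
The plan is to verify in turn each of the four hypotheses of Proposition~\ref{prop:Trivial-For-Closed-Transitive-Set} for the collection $\{\X \cap U_1 \w I : \w \in \W\}$. First I would check that these sets partition $\X$, which amounts to the decomposition $G(L) = \bigsqcup_{\w \in \W} U_1 \w I$; this follows immediately from the algorithm of Section~\ref{sec:DeterminingUOrbitForElement} applied with $w = e$, so that $U' = U_1$.

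Next I would verify stability under left-multiplication by $\A$. The key calculation is that for $t \in \A$ and $\w \in \W$, $tU_1 = U_1 t$ because $\A$ normalizes $U_1$, and $\w^{-1} t \w$ is still diagonal with unit entries, hence in $I$; this gives $t U_1 \w I = U_1 \w I$. That $\A$ also preserves $\X$ itself is a one-line calculation using $\s(t) = t$ and the commutativity of diagonal matrices. The closedness and triviality hypotheses of the proposition are both witnessed by the unique nonempty $\X \cap U_1 w I$: its closedness in $\X$ is given by hypothesis, and triviality of the $\A$-representation on its Borel-Moore homology is exactly Theorem~\ref{thm:trivial-on-U-orbit}.

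The main obstacle is simple transitivity of $A(F)/\A$ on the nonempty subsets. My approach is to identify $A(F)/\A$ with the translation lattice $X_*(A) \subset \W$ via the coset representatives $\e^\mu$, and to analyze the induced left-multiplication action on the indexing set $\W$. Using $\e^\mu \X = \X$ (because $\e^\mu$ and $b = \en$ are diagonal and $\s(\e^\mu) = \e^\mu$) together with $\e^\mu U_1 \e^{-\mu} = U_1$, left-multiplication by $\e^\mu$ sends $\X \cap U_1 \w I$ bijectively to $\X \cap U_1 (\e^\mu \w) I$. Hence emptiness or nonemptiness is constant on each $X_*(A)$-orbit in $\W$, and these orbits are exactly the cosets $w X_*(A)$ indexed by $w \in W$. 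The hypothesis that only one $w \in W$ yields a nonempty $\X \cap U_1 w I$ then pins the nonempty subsets down to a single orbit, on which $X_*(A)$ acts simply transitively by construction. With all four conditions in hand, the conclusion of Proposition~\ref{prop:Trivial-For-Closed-Transitive-Set} applies.
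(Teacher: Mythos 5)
Your proof is correct and takes essentially the same route as the paper's: both arguments decompose $\X$ into the pieces $\X \cap U_1 \w I$, observe that left-multiplication by $\e^\mu$ sends $\X \cap U_1 w I$ to $\X \cap U_1 \e^\mu w I$ (via $\e^\mu \X = \X$ and $\e^\mu U_1 = U_1 \e^\mu$), conclude that the nonempty pieces are exactly the $X_*(A)$-translates of the single nonempty $\X \cap U_1 w_0 I$, and then invoke Theorem~\ref{thm:trivial-on-U-orbit} together with the closedness hypothesis to feed Proposition~\ref{prop:Trivial-For-Closed-Transitive-Set}. You simply spell out the $\A$-stability of each piece and the simple transitivity in more detail than the paper does.
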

  \begin{proof}
    Since the sets $U_1 \w I$ for $\w \in \W$ partition $X$, we see that
    \begin{equation*}
      \X = \coprod_{\w\in\W} \X \cap U_1 \w I
    \end{equation*}
    as a set. Now if $\w = \e^\mu w$ with $w \in W$, then
    \begin{equation}
      \label{eq:translates-of-w-do-not-matter}
      \X \cap U_1 \w I = \e^\mu (\X \cap U_1 w I),
    \end{equation}
    since $\e^\mu \X = \X$ and $\e^\mu U_1 = U_1 \e^\mu$.  Let $w_0 \in W$ be
    the unique Weyl group element such that $\X \cap U_1 w_0 I$ is nonempty.
    By (\ref{eq:translates-of-w-do-not-matter}), if $\X \cap U_1 \w I$ is
    nonempty, we must have $\w = \e^\mu w_0$ for some $\mu$.  Now the
    representation of $\A$ on the Borel-Moore homology $\X \cap U_1 w_0 I$ is
    trivial by Theorem~\ref{thm:trivial-on-U-orbit}.  Since each $\X \cap U_1
    \w I$ is preserved by the action of $\A$, the conditions of
    Proposition~\ref{prop:Trivial-For-Closed-Transitive-Set} are satisfied.
  \end{proof}

  Finally, we prove a result that will be used for the remaining $G = SL_3$
  cases.
  \begin{prop}
    \label{prop:Triviality-For-Stratified-Set}
    Assume that we have a variety $S \subset X$ on which $\A$ acts, such the
    valuations of entries of representatives of points of $S$ are bounded
    below.  Further, assume that we have a stratification $S_0 \subset S_1
    \subset \cdots \subset S_m = S$, where $S_i$ is closed in $S_{i + 1}$ for
    all $i < m$.  Assume that on $S_0$ and on $T_i = S_i \setminus S_{i - 1}$
    for $i \ge 1$ the action of $\A$ can be extended to an action of $A(\O_L)$.
    Then the representation of $\A$ on the Borel-Moore homology of $S$ induced
    by the action of $\A$ on $S$ is trivial.
  \end{prop}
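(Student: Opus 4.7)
The plan is to prove the result by induction on $i$, showing that $\A$ acts trivially on $H^{BM}_*(S_i)$ for each $0 \le i \le m$; taking $i = m$ then yields the proposition.

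For the base case $i = 0$, the hypothesis provides an extension of the $\A$-action on $S_0$ to an $A(\O_L)$-action. Since $A(\O_L)$ is connected, it acts trivially on Borel-Moore homology---the same principle invoked at the end of the proof of Proposition~\ref{prop:trivial-action}---so in particular $\A$ acts trivially on $H^{BM}_*(S_0)$.

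For the inductive step, assume $\A$ acts trivially on $H^{BM}_*(S_{i-1})$. Since $S_{i-1}$ is closed in $S_i$ with open complement $T_i$, there is an $\A$-equivariant long exact sequence in Borel-Moore homology
\begin{equation*}
\cdots \to H^{BM}_{n+1}(T_i) \xrightarrow{\partial} H^{BM}_n(S_{i-1}) \to H^{BM}_n(S_i) \to H^{BM}_n(T_i) \xrightarrow{\partial} H^{BM}_{n-1}(S_{i-1}) \to \cdots.
\end{equation*}
The inductive hypothesis gives trivial $\A$-action on $H^{BM}_*(S_{i-1})$, and the base-case argument applied to $T_i$ (whose $\A$-action extends to an $A(\O_L)$-action) gives trivial action on $H^{BM}_*(T_i)$. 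Feeding these facts into the long exact sequence should then produce the desired triviality on $H^{BM}_n(S_i)$.

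The main obstacle is this last deduction. A bare diagram chase only forces $(t-1)^2 = 0$ on $H^{BM}_n(S_i)$ for each $t \in \A$, i.e.\ a unipotent rather than a trivial action. To upgrade to genuine triviality, I would use that the trivial actions on $S_{i-1}$ and $T_i$ both come from connected $A(\O_L)$-actions and not merely from abstract triviality; the natural route is to show that the connecting homomorphism $\partial$ vanishes $\A$-equivariantly, so that the long exact sequence breaks into $\A$-equivariant short exact sequences $0 \to H^{BM}_n(S_{i-1}) \to H^{BM}_n(S_i) \to H^{BM}_n(T_i) \to 0$, and then use the additional connected-group structure to argue that the corresponding extension class in $H^1(\A, \mathrm{Hom}(H^{BM}_n(T_i), H^{BM}_n(S_{i-1})))$ vanishes. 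Handling this extension issue is where the real content of the proof should sit; once it is addressed, the induction closes and we obtain $\A$ acting trivially on $H^{BM}_*(S_m) = H^{BM}_*(S)$.
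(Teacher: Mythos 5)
You correctly reproduce the skeleton of the paper's argument---induction on $i$, base case from connectedness of $A(\O_L)$, long exact sequence from the closed inclusion $S_{i-1} \hookrightarrow S_i$---and, to your credit, you spot the real issue: the long exact sequence alone only gives a unipotent action of $\A$ on $H^{BM}_*(S_i)$, not a trivial one. However, your proposed route around this (showing the connecting map vanishes equivariantly and then arguing the extension class in $H^1(\A, \mathrm{Hom}(\cdot,\cdot))$ dies via ``connected-group structure'') is left entirely open, and it is unclear how connectedness of $A(\O_L)$ would control an extension of $\A$-modules, since $\A$ itself is not connected. You also never invoke the hypothesis that the valuations of entries of representatives of points of $S$ are bounded below, which is a clear sign that something essential is being skipped.

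The paper closes the gap differently and more cheaply. Because valuations are bounded below, there is some $N > 0$ such that the $\A$-action on $S$ factors through the finite group $A(\O_F/\e^N\O_F)$. The category of representations of a finite group (with the coefficients used for Borel-Moore homology here) is semisimple, so the short exact sequence of $\A$-modules
\begin{equation*}
0 \to \ker(f) \to H^{BM}_j(S_i) \to \coker(g) \to 0
\end{equation*}
necessarily splits, giving $H^{BM}_j(S_i) \cong \ker(f) \oplus \coker(g)$ as $\A$-representations; both summands carry trivial actions (a subrepresentation of $H^{BM}_j(S_{i-1})$ and a quotient of $H^{BM}_j(T_i)$, respectively), so the whole thing is trivial. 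Your proof should be redirected at this point: rather than trying to kill the extension class by a geometric/connectedness argument, use the bounded-valuation hypothesis to reduce to representations of a finite group and invoke semisimplicity. That is where that hypothesis earns its keep.
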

  \begin{proof}
    Denote the Borel-Moore homology by $H^{BM}$.  Because of our assumption
    that the action of $\A$ can be extended to an action of $A(\O_L)$ on $T_i$,
    $\A$ acts trivially on $H_j^{BM}(T_i)$.  We will prove by induction on $i$
    that it acts trivially on $H_j^{BM}(S_i)$ for all $i$, which will give us
    our conclusion when $i = m$.

    The base case $i = 0$ follows from our assumption about the action of $\A$
    on $S_0$ being extensible to an action of $A(\O_L)$.  For the induction
    step, note that since $S_i$ is closed in $S_{i + 1}$, we have a long exact
    sequence in compactly supported cohomology:
    \begin{equation*}
      \xymatrix{
	\cdots \ar[r] &
	H^j_c(T_i) \ar[r] &
	H^j_c(S_{i+1}) \ar[r] &
	H^j_c(S_i) \ar[r] &
	\cdots}.
    \end{equation*}
    Taking duals, we have a long exact sequence in Borel-Moore homology:
    \begin{equation*}
     \xymatrix{
       \cdots \ar[r] &
       H_j^{BM}(S_i) \ar[r]^-f &
       H_j^{BM}(S_{i+1}) \ar[r]^-g &
       H_j^{BM}(T_i) \ar[r] &
       \cdots}
    \end{equation*}
    and hence the short exact sequence
    \begin{equation*}
     \xymatrix{
       0 \ar[r] &
       \ker(f) \ar[r]^-f &
       H_j^{BM}(S_{i+1}) \ar[r]^-g &
       \coker(g) \ar[r] &
       0}
    \end{equation*}
    Now $\A$ acts trivially on $H_j^{BM}(S_i)$ and hence on $\ker(f)$.  It acts
    trivially on $H_j^{BM}(T_i)$ and hence on $\coker(g)$.  Furthermore, since
    the valuations of entries of representatives of elements of $S$ are bounded
    below, there is some $N > 0$ such that the action of $\A$ on $S$ factors
    through $A(\O_F/\e^N\O_F)$, which is a finite group with $q^{nN}$ elements.
    Therefore, the representations of $\A$ on the Borel-Moore homology of
    subvarieties of $S$ are a semisimple category, and in particular we must
    have
    \begin{equation*}
      H_j^{BM}(S_{i+1}) = \ker(f) \oplus \coker(g)
    \end{equation*}
    as representations of $\A$.  Therefore, the representation of $\A$ on
    $H_j^{BM}(S_{i+1})$ is trivial.
  \end{proof}

  \begin{theorem}
    \label{thm:Triviality-For-Nice-Stratification}
    Assume that $\nu$ is strictly dominant and that that $\X \cap U_1 wI$ is
    empty for all but two values of $w$: $w_0$ and $w_1$, where $w_1$ is either
    $1$ or the longest element in $W$.  Assume further that if $\X \cap U_1
    w_1I$ is divided up into subsets $Y_\delta$ as in
    Theorem~\ref{thm:trivial-on-a-piece}, then for each $\delta$ there is a
    $\mu_\delta \in A(F)/\A$ such that
    \begin{equation*}
      (\X \cap U_1 w_0 I) \bigcup \left(\bigcup_{\delta > m} \e^{\mu_\delta} Y_\delta\right)
    \end{equation*}
    is closed in
    \begin{equation*}
      Z = (\X \cap U_1 w_0 I) \bigcup \left(\bigcup_{\delta} \e^{\mu_\delta}
      Y_\delta\right)
    \end{equation*}
    for all $m \in \Z$ and that $Z$ is closed in $\X$.  Then the representation
    of $\A$ on the Borel-Moore homology of $\X$ which is induced by the
    left-multiplication action of $\A$ on $\X$ is trivial.  Furthermore, the
    representation of $A(F)$ on the Borel-Moore homology of $\X$ simply
    permutes the homology spaces of the translates of $Z$.
  \end{theorem}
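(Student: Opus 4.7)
The plan is to decompose $\X$ into $A(F)/\A$-translates of $Z$ and apply Proposition~\ref{prop:Trivial-For-Closed-Transitive-Set}, using Proposition~\ref{prop:Triviality-For-Stratified-Set} to supply the key hypothesis that the $\A$-representation on $H^{BM}(Z)$ is trivial.

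For the decomposition: since only $w_0$ and $w_1$ give nonempty $\X \cap U_1 w I$, equation~(\ref{eq:translates-of-w-do-not-matter}) writes $\X$ as a disjoint union of the sets $\e^\mu(\X \cap U_1 w_0 I)$ and $\e^\mu Y_\delta$ as $\mu \in A(F)/\A$ and $\delta$ vary. The prescribed choice of $\mu_\delta$ groups each $\e^\mu Y_\delta$ with $\e^{\mu-\mu_\delta}(\X \cap U_1 w_0 I)$, so $\X = \coprod_{\mu' \in A(F)/\A} \e^{\mu'} Z$. Each translate is $\A$-stable because $\A$ commutes with $\e^{\mu'}$ and preserves each double-coset intersection; $Z$ is closed in $\X$ by hypothesis; and the action of $A(F)/\A$ on the translates is simply transitive since $\e^\mu(\X \cap U_1 w_0 I) = \X \cap U_1 w_0 I$ would force $\e^\mu w_0 I = w_0 I$, and hence $\mu = 0$.

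To verify triviality on $Z$ I would apply Proposition~\ref{prop:Triviality-For-Stratified-Set}. Enumerate the $\delta$ with $Y_\delta \neq \emptyset$ as $\delta_1 > \delta_2 > \cdots > \delta_k$; this is a finite list because $\delta \le 0$ from Theorem~\ref{thm:trivial-on-a-piece} and the valuations of entries of representatives of points in $\X$ are bounded below by the triples associated to $x$. Set $S_0 = \X \cap U_1 w_0 I$ and $S_i = S_0 \cup \bigcup_{j \le i} \e^{\mu_{\delta_j}} Y_{\delta_j}$, so that (choosing $m$ appropriately in the hypothesis) each $S_i$ is closed in $S_{i+1}$. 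On $S_0$ the $\A$-action extends to an $A(\O_L)$-action by the proof of Theorem~\ref{thm:trivial-on-U-orbit}, which invokes Proposition~\ref{prop:trivial-action} precisely in order to construct such an extension; on each $T_i = \e^{\mu_{\delta_i}} Y_{\delta_i}$ the corresponding extension is produced in the proof of Theorem~\ref{thm:trivial-on-a-piece}, and it transports across the translation by $\e^{\mu_{\delta_i}}$ because $A(\O_L)$ commutes with $\e^{\mu_{\delta_i}}$. Since $Z$ is a finite union of sets with bounded valuations of entries, the boundedness hypothesis of the proposition also holds, and we conclude that the $\A$-representation on $H^{BM}(Z)$ is trivial.

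Combining the two steps, Proposition~\ref{prop:Trivial-For-Closed-Transitive-Set} yields both the triviality of the $\A$-representation on $H^{BM}(\X)$ and the fact that $A(F)$ permutes the homology spaces of the translates of $Z$. I expect the main obstacle to be the bookkeeping: verifying that only finitely many $\delta$ contribute (so that the stratification is genuinely finite), and extracting from the proofs of Theorems~\ref{thm:trivial-on-U-orbit} and~\ref{thm:trivial-on-a-piece} the existence of the extended $A(\O_L)$-action on the strata themselves, rather than merely the resulting triviality of the homology representation that those theorems assert.
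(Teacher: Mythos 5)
Your overall plan---decompose $\X$ into $A(F)/\A$-translates of $Z$, establish triviality on $Z$ via the stratification and Proposition~\ref{prop:Triviality-For-Stratified-Set}, then apply Proposition~\ref{prop:Trivial-For-Closed-Transitive-Set}---is exactly the paper's plan, and the disjointness, closedness and finiteness checks you spell out are correct and left implicit in the paper.

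The gap is precisely the one you flag at the end, and it is real: the claim that the $\A$-action on $S_0 = \X \cap U_1 w_0 I$ and on each $T_i = \e^{\mu_{\delta_i}}Y_{\delta_i}$ extends to an $A(\O_L)$-action is false, and the proofs of Theorems~\ref{thm:trivial-on-U-orbit} and~\ref{thm:trivial-on-a-piece} do not produce such extensions. The conclusion of Proposition~\ref{prop:trivial-action} is only that the $\A$-representation on $H_j^{BM}\bigl(\f^{-1}(T)/(U_1\cap I')\bigr)$ is trivial; the $A(\O_L)$-action lives on $T/\Umn$ at the far end of a chain of affine-space fibrations and the map $\ff$, and $\ff$ is $\A$-equivariant but not $A(\O_L)$-equivariant, since for $\tau\in A(\O_L)$ with $\s(\tau)\neq\tau$ one has $\f(\tau g\tau^{-1}) = \tau g^{-1}\en\bigl(\tau^{-1}\s(\tau)\bigr)\s(g)\s(\tau)^{-1}\enm \neq \tau\f(g)\tau^{-1}$. (Indeed, the general failure of $A(\O_L)$ to act on $\X$ by left-multiplication is exactly why the paper abandons the $SL_2$ approach for $SL_3$.) The fix is the paper's own: invoke Theorems~\ref{thm:trivial-on-U-orbit} and~\ref{thm:trivial-on-a-piece} merely for the triviality of the $\A$-representations on $H_j^{BM}(S_0)$ and $H_j^{BM}(T_i)$, and observe that the proof of Proposition~\ref{prop:Triviality-For-Stratified-Set} only ever uses that triviality on each stratum (together with the bounded-below valuations for the semisimplicity step), never the $A(\O_L)$-action itself. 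With that substitution your argument coincides with the paper's.
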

  \begin{proof}
    First, note that, by Theorem~\ref{thm:trivial-on-a-piece}, $\A$ acts on the
    sets $Y_\delta$, and hence the sets $\e^{\mu_\delta}Y_\delta$, by
    left-multiplication, and the resulting representation on the Borel-Moore
    homology of $\e^{\mu_\delta}Y_\delta$ is trivial.  $\A$ also acts on $(\X
    \cap U_1 w_0 I)$ by left-multiplication, so it acts on $Z$ by
    left-multiplication.  By Theorem~\ref{thm:trivial-on-U-orbit}, the
    representation of $\A$ on the Borel-Moore homology of $(\X \cap U_1 w_0 I)$
    is trivial.  So by Proposition~\ref{prop:Triviality-For-Stratified-Set},
    $\A$ acts trivially on the Borel-Moore homology of $Z$.
    
    Since $A(F)/\A$ acts simply transitively on the translates of $Z$ and $Z$
    is closed in $\X$ by assumption,
    Proposition~\ref{prop:Trivial-For-Closed-Transitive-Set} applies to give us
    the desired result.
  \end{proof}

  \section{$G = SL_2$}
  \label{chap:SL_2}
  When $G = SL_2$, the left-multiplication action of $\A$ on $\X$ can be
  directly extended to the left-multiplication action of $A(\O_L)$ on $\X$.
  Indeed, let $g \in SL_2(L)$ and let
  \begin{equation*}
    \en = \begin{pmatrix}
      \e^{m} & 0 \\
      0 & \e^{-m} \\
    \end{pmatrix} 
  \end{equation*}
  with $m \neq 0$.  Pick an element of $A(\O_L)$, call it
  \begin{equation*}
    \tau = \begin{pmatrix}
      t & 0 \\
      0 & t^{-1}
    \end{pmatrix},
  \end{equation*}
  and let $g' = \tau g$.

  Now $g \in U_1 \w I$ for some $\w \in \W$.  There are two cases:
  \begin{caselist}
    \item
      \begin{equation*}
	\w = \begin{pmatrix} \e^k & 0 \\ 0 & \e^{-k} \end{pmatrix}
      \end{equation*}
      with $k \in \Z$.  Since we can change $g$ by right-multiplication
      by elements of $I$ without affecting anything, we can take
      \begin{equation*}
	g = \begin{pmatrix} \e^k & \e^{-k}a \\ 0 & \e^{-k} \end{pmatrix}
      \end{equation*}
      with $a \in L$.

      Let
      \begin{equation*}
	h = g^{-1}\en \s(g) =
	\begin{pmatrix}
	  \e^m & \s(a)\e^{m-2k} - a \e^{-m-2k} \\
	  0 & \e^{-m}
      	\end{pmatrix}
      \end{equation*}
      and 
      \begin{equation*}
	h' = (g')^{-1}\en \s(g') = 
	\begin{pmatrix}
	  \e^m t^{-1}\s(t) & \s(a)\e^{m-2k}t^{-1}\s(t) - a \e^{-m-2k}
	  t\s(t^{-1}) \\
	  0 & \e^{-m}t\s(t^{-1})
	\end{pmatrix}.
      \end{equation*}
      Now the valuations of $t^{-1}\s(t)$ and $t\s(t^{-1})$ are 0, so the
      valuations of the top-left, bottom-left, and bottom-right entries of $h$
      and $h'$ are clearly the same.  Since $m \neq 0$, the two terms in the
      top-right entry of each matrix have different valuations.  We see that
      the valuation of the top-right entry of $h$ is $\val(a) - |m| - 2k$, and
      the same is true for the top-right entry of $h'$.  Both $h$ and $h'$ have
      only one $2 \times 2$ minor, and its valuation is 1.  So by
      Theorem~\ref{thm:DeterminingIwahoriDoubleCosetForElement}, $h$ and $h'$
      are both in $IxI$ for the same $x$.  This means that $g$ and $g'$ are
      both in $\X$ for the same $x$.
    \item
      \begin{equation*}
	\w = \begin{pmatrix} 0 & \e^k \\ \e^{-k} & 0 \end{pmatrix}
      \end{equation*}
      with $k \in \Z$.  Since we can change $g$ by right-multiplication
      by elements of $I$ without affecting anything, we can take
      \begin{equation*}
	g = \begin{pmatrix} a\e^{-k} & \e^k \\ \e^{-k} & 0 \end{pmatrix}
      \end{equation*}
      with $a \in L$.

      Let
      \begin{equation*}
	h = g^{-1}\en \s(g) =
	\begin{pmatrix}
	  \e^{-m} & 0 \\
	  \s(a)\e^{m-2k} - a \e^{-m-2k} & \e^{m} \\
      	\end{pmatrix}
      \end{equation*}
      and 
      \begin{equation*}
	h' = (g')^{-1}\en \s(g') = 
	\begin{pmatrix}
	  \e^{-m} t\s(t)^{-1} & 0 \\
	  \s(a)\e^{m-2k}t^{-1}\s(t) - a \e^{-m-2k} t\s(t^{-1}) & \e^{m}t^{-1}\s(t)\\
	\end{pmatrix}.
      \end{equation*}
      Now the valuations of $t^{-1}\s(t)$ and $t\s(t^{-1})$ are 0, so the
      valuations of the top-left, top-right, and bottom-right entries of $h$
      and $h'$ are clearly the same.  Since $m \neq 0$, the two terms in the
      bottom-left entry of each matrix have different valuations.  We see that
      the valuation of the bottom-left entry of $h$ is $\val(a) - |m| - 2k$,
      and the same is true for the bottom-left entry of $h'$.  Both $h$ and
      $h'$ have only one $2 \times 2$ minor, and its valuation is 1.  So by
      Theorem~\ref{thm:DeterminingIwahoriDoubleCosetForElement}, $h$ and $h'$
      are both in $IxI$ for the same $x$.  This means that $g$ and $g'$ are
      both in $\X$ for the same $x$.
  \end{caselist}
  Since in both cases we found that $g$ and $\tau g$ are both in $\X$ for the
  same $x$, we conclude that $\X$ is preserved by the left-multiplication
  action of $A(\O_L)$.  Therefore, the representation of $\A$ on the
  Borel-Moore homology of $\X$ is trivial.
  
  \section{$G = SL_3$}
  \label{chap:SL_3}
  When $G = SL_3$, the left-multiplication action of $\A$ on $\X$ cannot be
  directly extended to a left-multiplication action of $A(\O_L)$ in all cases.
  We have to treat various cases directly.  Throughout this chapter, we
  identify $W$ with the permutation group $\Sigma_3$, and label the
  transpositions $(12)$ and $(23)$ by $s_1$ and $s_2$ respectively.  We will
  use these two elements as generators for $W$ as a Coxeter group.  Lengths of
  elements of $W$ will mean the lengths of the shortest expression in terms of
  $s_1$ and $s_2$.  Let  
  \begin{equation*}
    \eta := s_1 s_2 s_1 = s_2 s_1 s_2 = \begin{pmatrix}
      0 & 0 & 1 \\
      0 & 1 & 0 \\
      1 & 0 & 0
    \end{pmatrix}
  \end{equation*}
  be the maximal-length element of $W$.

  Now let us fix a point in the base alcove of the Bruhat-Tits building for
  $SL_3(L)$.  As discussed in~\cite{Kottwitz-Harmonic-Analysis}, for every
  point of $X$ there is a corresponding convex polytope in the standard
  apartment of the building.  In the case of $SL_3$, this has six vertices and
  the standard apartment is a plane, so the convex polytope is a hexagon.
  These hexagons have sides that are perpendicular to the edges of the base
  alcove.  To find the hexagon corresponding to a given point $gI$ of $X$, one
  needs to find, for each $w \in W$, the $x \in \W$ such that $g \in w^{-1}U_1w
  x I$.  Applying those six extended affine Weyl group elements to the base
  alcove gives us six images of our chosen point in the standard apartment.
  These images are the vertices of the hexagon.  Two vertices are connected to
  each other if the corresponding $w \in W$ have lengths that differ by 1.  We
  will label the vertices of a hexagon with the corresponding Weyl group
  elements.

  Given such a hexagon $E$, the set of all $gI$ such that the hexagon
  corresponding to $g$ is a subset of $E$ is a closed set in $X$.  This means
  that if we have a subset $S$ of $X$, the set of points whose hexagon is
  contained in the hexagon of some point of $S$ is a closed set containing $S$,
  and hence contains the closure of $S$.

  \begin{prop}
    \label{prop:closedness-of-hexagon-piece}
    Assume that $\X$ is a disjoint union of subsets which satisfy the following
    properties:
    \begin{itemize}
      \item Each subset is preserved by the left-multiplication action of
	$\A$.
      \item $A(F)/\A$ acts simply transitively on the collection of subsets.
      \item There is some subset $Y$, a $w_1 \in W$ and $y_1, y_2 \in \W$ such
	that if $E$ is the hexagon corresponding to any element of $Y$ the
	$w_1$ corner of $E$ is given by $y_1$ and the $\eta w_1$ corner of $E$
	is given by $y_2$.
    \end{itemize}
    Then $Y$ is a closed subset of $\X$.
  \end{prop}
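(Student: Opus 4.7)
The plan is to combine the hexagon closure criterion stated just before the proposition with the simply transitive action of $A(F)/\A$ on the pieces of the decomposition. By that criterion, the closure of $Y$ in $X$ is contained in the set of all $gI$ whose hexagon is a subset of the hexagon of some element of $Y$. It therefore suffices to show that any point of $\X$ with this property already lies in $Y$. Given such a $p \in \X$, simple transitivity lets us write $p = \e^\mu q'$ for some $q' \in Y$ and some coweight $\mu$, and the task reduces to showing $\mu = 0$.

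A direct computation using that $A(L)$ normalizes every conjugate of $U_1$ gives $\e^\mu \cdot w^{-1}U_1 w = w^{-1}U_1 w \cdot \e^\mu$, so if $q' \in w^{-1}U_1 w x I$ then $p \in w^{-1}U_1 w (\e^\mu x) I$. Hence the $w$-corner of the hexagon of $p$ is $\e^\mu$ times the $w$-corner of the hexagon of $q'$, which under the action of $\W$ on the apartment corresponds to translation by $\mu$. In particular, the $w_1$- and $\eta w_1$-corners of the hexagon of $p$ are $y_1 + \mu$ and $y_2 + \mu$, and both must lie inside the hexagon of $q$.

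For any hexagon of the type under consideration, the normal cone at its $w_1$-vertex contains the closure of the $w_1$-Weyl-chamber, because the edges of the hexagon are perpendicular to the walls of the base alcove and hence to the simple root hyperplanes that cut out the chamber. So for any direction $\rho$ in the open $w_1$-Weyl-chamber, the vertex $y_1$ is the extremum of the hexagon of $q$ in direction $\rho$. The requirement that $y_1 + \mu$ lie in the hexagon of $q$ then gives $\mu \cdot \rho \le 0$; since $\eta w_1$ labels the opposite chamber, the vertex $y_2$ is the extremum in direction $-\rho$, and the requirement that $y_2 + \mu$ lie in the hexagon gives $\mu \cdot \rho \ge 0$. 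Thus $\mu \cdot \rho = 0$ for every $\rho$ in a nonempty open cone, forcing $\mu = 0$. So $p = q' \in Y$, and $Y$ is closed in $\X$.

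The main obstacle is the geometric claim about the normal cone at the $w_1$-vertex. It is immediate for a regular hexagon, where the normal cone at each vertex is exactly the corresponding Weyl chamber; for general or even degenerate $SL_3$ ADL hexagons the normal cone at the vertex labeled $w_1$ may be larger, but the edge-direction constraint shows it always contains the closure of the $w_1$-chamber, which is enough for the argument.
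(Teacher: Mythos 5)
Your proof is correct and follows essentially the same route as the paper: both reduce to the fact that every admissible hexagon lies in the intersection of two $120^\circ$ cones with apexes at the shared opposite vertices $y_1$ and $y_2$, so no nonzero translate of that pair can remain inside such a hexagon. You express this via normal cones and extremal linear functionals on an open cone of directions, while the paper appeals directly to the cone picture in Figure~\ref{fig:cones}, but the geometric content is identical.
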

  \begin{proof}
    By assumption, if $gI \in \X$, then $gI = \e^\mu hI$ for some $\mu$, where
    $hI \in Y$.  This means that the corners of the hexagon corresponding to
    $gI$ are translates by $\e^\mu$ of the corners of the hexagon corresponding
    to $hI$.  Now assume that $gI \notin Y$, so that $\mu \neq (0, 0, 0)$.

    By assumption, all the hexagons corresponding to elements of $Y$ share a
    pair of opposite vertices.  The hexagon corresponding to $gI$ is a
    translate of one of those hexagons.  But if two hexagons share a pair of
    opposite vertices, one of them cannot contain a translate of the other.
    Indeed, let the two opposite vertices that the hexagons share be $z_1$ and
    $z_2$.  These are points in the standard apartment.  Since the sides of the
    hexagons must be perpendicular to the sides of the base alcove, both of the
    hexagons we are considering must lie in the intersection of two closed
    cones, one with vertex at $z_1$, and one with vertex at $z_2$, as shown in
    Figure~\ref{fig:cones}.  The angle of each cone is $120^\circ$.  Since this
    is less than $180^\circ$, if $z_1$ and $z_2$ are both translated by the
    same nonzero vector, one or the other of them will lie outside the
    intersection of the two cones.
    \begin{figure}[htbp]
      \begin{center}
	\includegraphics{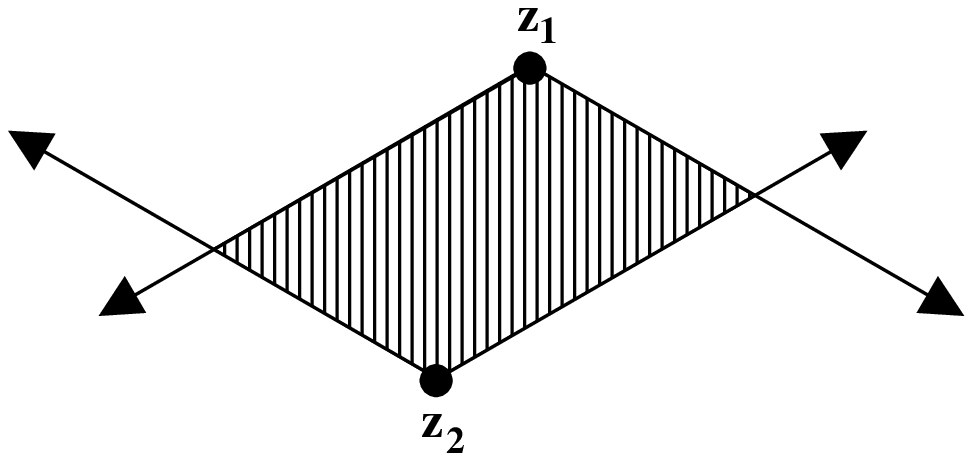}
      \end{center}
      \caption{Illustration of two opposite vertices of a hexagon.  The hexagon
      must be contained in the shaded region.}
      \label{fig:cones}
    \end{figure}

    Therefore, the hexagon corresponding to $gI$ is not contained in any of the
    hexagons corresponding to elements of $Y$.  This means that $gI$ is not in
    the closure of $Y$ in $\X$.  Since this was true for any $gI \notin Y$, it
    follows that $Y$ is closed in $\X$.
  \end{proof}

  \begin{theorem}
    \label{thm:Triviality-For-One-Intersection}
    Assume that $\nu$ is strictly dominant, that $\X \cap U_1 wI$ is empty for
    all but one $w \in W$, and that there is a $w_1 \in W$ and $y_1, y_2 \in
    \W$ such that if $E$ is the hexagon corresponding to any element of $\X
    \cap U_1 wI$ the $w_1$ corner of $E$ is given by $y_1$ and the $\eta w_1$
    corner of $E$ is given by $y_2$.  Then the representation of $\A$ on the
    Borel-Moore homology of $\X$ which is induced by the left-multiplication
    action of $\A$ on $\X$ is trivial.  Furthermore, the representation of
    $A(F)$ on the Borel-Moore homology of $\X$ simply permutes the homology
    spaces of translates of the one nonempty intersection $\X \cap U_1 wI$.
  \end{theorem}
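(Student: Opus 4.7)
The plan is to reduce this theorem to Theorem~\ref{thm:Triviality-For-One-Closed-Intersection} by using Proposition~\ref{prop:closedness-of-hexagon-piece} to establish that the unique nonempty intersection is closed in $\X$. That is the only thing beyond the hypothesis of Theorem~\ref{thm:Triviality-For-One-Closed-Intersection}, so once closedness is secured the rest of the conclusion is immediate.

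First I would set up the decomposition. Since the sets $U_1 \w I$ for $\w \in \W$ partition $X$, and by (\ref{eq:translates-of-w-do-not-matter}) we have $\X \cap U_1 \w I = \e^\mu(\X \cap U_1 w I)$ whenever $\w = \e^\mu w$, the nonempty pieces of the partition of $\X$ by $U_1\w I$-cosets are precisely the $A(F)/\A$-translates $\e^\mu(\X \cap U_1 w_0 I)$, where $w_0$ is the unique Weyl group element with $\X \cap U_1 w_0 I$ nonempty. Each piece is preserved by the left-multiplication action of $\A$ (since $\A \subset I$ and $\e^\mu$ commutes with the action of $\A$), and $A(F)/\A$ acts simply transitively on the collection of pieces. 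This checks the first two bullets of Proposition~\ref{prop:closedness-of-hexagon-piece} with $Y := \X \cap U_1 w_0 I$.

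Next, I would observe that the third bullet of Proposition~\ref{prop:closedness-of-hexagon-piece} is literally the hexagon hypothesis of the present theorem, with the same $w_1$, $y_1$, $y_2$. Applying the proposition then gives that $Y = \X \cap U_1 w_0 I$ is closed in $\X$. At that point the hypotheses of Theorem~\ref{thm:Triviality-For-One-Closed-Intersection} are met, and invoking it yields both the triviality of the representation of $\A$ on $H^{BM}(\X)$ and the statement that the $A(F)$-representation permutes the homology of the translates of $Y$.

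The main (and essentially only) obstacle is checking closedness of the unique nonempty intersection, which is precisely what Proposition~\ref{prop:closedness-of-hexagon-piece} is designed for; the hexagon rigidity argument there—two opposite hexagon vertices cannot both be preserved under a nonzero translation in the standard apartment—is the real geometric content, and it has already been established. The present theorem is therefore a clean assembly of that proposition with Theorem~\ref{thm:Triviality-For-One-Closed-Intersection}, and no further computation in $G(L)$ is needed.
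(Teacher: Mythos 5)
Your proposal is correct and follows exactly the paper's proof: the paper also applies Proposition~\ref{prop:closedness-of-hexagon-piece} (after noting the decomposition of $\X$ into $\X \cap U_1 \w I$) to conclude that $\X \cap U_1 w_0 I$ is closed, and then cites Theorem~\ref{thm:Triviality-For-One-Closed-Intersection}. Your write-up is just a more explicit verification of the hypotheses of Proposition~\ref{prop:closedness-of-hexagon-piece}, which the paper leaves implicit.
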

  \begin{proof}
    Let $w_0 \in W$ be the unique Weyl group element such that $\X \cap U_1 w_0
    I$ is nonempty.  Since $\X$ is the disjoint union of sets of the form $\X
    \cap U_1 \w I$ for $\w \in \W$, we see that by
    Proposition~\ref{prop:closedness-of-hexagon-piece} $\X \cap U_1 w_0 I$ is
    closed.  Now by Theorem~\ref{thm:Triviality-For-One-Closed-Intersection},
    our conclusion follows.
  \end{proof}

  Now, we will consider all possible $x \in \W$ and $\nu = (i, j, k)$.  We will
  reduce the set of combinations of $x$ and $\nu$ that we need to consider, and
  show that $\nu$ can always be assumed to be dominant.  Then for each
  remaining combination of $x$ and $\nu$ we will show that either
  Theorem~\ref{thm:Triviality-For-Nice-Stratification} or
  Theorem~\ref{thm:Triviality-For-One-Intersection} or applies.
  
  \subsection{Reduction Steps}
  \label{sec:Reduction-Steps}
  Following Beazley~\cite{Beazley-codimensions}, we define two outer
  automorphisms of $SL_3(L)$ that preserve $I$ and commute with $\s$.

  Let
  \begin{equation*}
    \tau :=
    \begin{pmatrix}
      0 & 0 & \e^{-1} \\
      1 & 0 & 0 \\
      0 & 1 & 0
    \end{pmatrix}
  \end{equation*}
  and define $\phi(g) := \tau g \tau^{-1}$.  Then $\phi$ is an automorphism of
  $SL_3(L)$ of order 3, clearly commutes with $\s$, since $\s(\tau) = \tau$,
  and by explicit computation preserves $I$.  Also by explicit computation, if
  $w \in W$, then
  \begin{equation*}
    \phi(\e^{(\mu_1,\mu_2,\mu_3)}w) = \e^{(-1, 0, 0) + s_1s_2w(0, 0, 1) +
    s_1s_2(\mu_1, \mu_2, \mu_3)}s_1s_2ws_2s_1.
  \end{equation*}
  In particular,
  \begin{equation*}
    \phi(s_1) = s_2 \quad \mbox{and} \quad \phi(s_2) = s_1s_2s_1 = s_2s_1s_2.
  \end{equation*}

  Define $\psi(g) := \eta(g^t)^{-1} \eta^{-1}$, where $\eta$ is the maximal
  length element of $W$.  Then $\psi$ is an automorphism of $SL_3(L)$ of order
  2, commutes with $\s$, since $\s(\eta) = \eta$, and by explicit computation
  preserves $I$.  Also by explicit computation, if $w \in W$, then
  \begin{equation*}
    \psi(\e^{(\mu_1,\mu_2,\mu_3)}w) = \e^{-\eta(\mu_1, \mu_2, \mu_3)}\eta w \eta.
  \end{equation*}
  In particular,
  \begin{equation*}
    \psi(s_1) = s_2 \quad \mbox{and} \quad \psi(s_2) = s_1.
  \end{equation*}

  Since $\phi$
  and $\psi$ preserve $I$ and commute with $\s$, we see that
  \begin{equation*}
    \X \cong \phi(\X) = X_{\phi(x)}(\e^{s_1s_2\nu})
  \end{equation*}
  and
  \begin{equation*}
    \X \cong \psi(\X) = X_{\psi(x)}(\e^{-\eta\nu}).
  \end{equation*}
  Now let $t \in \A$.  Then
  \begin{equation*}
    \phi(tg) = \phi(t)\phi(g) = t'\phi(g)
  \end{equation*}
  where $t' = s_1s_2ts_2s_1 \in \A$.  Similarly,
  \begin{equation*}
    \psi(tg) = \psi(t)\psi(g) = t'\psi(g)
  \end{equation*}
  where $t' = \eta t^{-1} \eta \in \A$.

  As we can see, $\phi$ and $\psi$ do not commute with the left-multiplication
  action of $\A$ on $\X$, but under these isomorphisms this action becomes the
  composition of some endomorphism of $\A$ and the left-multiplication action.
  As a result, if all elements of $\A$ act trivially on the homology of
  $\phi(\X)$ or $\psi(\X)$, then they all act trivially on the homology of
  $\X$.

  There is also an isomorphism between $\X$ and $X_x(\e^{w\nu})$, as discussed
  in \cite{Kottwitz-f_nu}, given by left-multiplication by $w$.  Again, this
  isomorphism does not commute with the left-multiplication action of $\A$, but
  under this isomorphism this action becomes a composition of the conjugation
  action of $w$ on $\A$ and the left-multiplication action.  As a result, if
  all elements of $\A$ act trivially on the cohomology of $X_x(\e^{w\nu})$,
  then they all act trivially on the cohomology of $\X$.  Since by assumption
  the integers in $\nu$ are all distinct, by appropriate choice of $w$, we can
  always. without changing $x$, reduce to the case where $\nu$ is strictly
  dominant: $\nu = (\nu_1, \nu_2, \nu_3)$, with $\nu_1 > \nu_2 > \nu_3$.

  Now we will use $\phi$, $\psi$, and left-multiplication by appropriate $w$ to
  reduce the number of cases we need to consider.  There are three
  possibilities, depending on $x$.
  \begin{caselist}
    \item The permutation part of $x$ is the identity.  In this case, $x =
      \e^{(\mu_1,\mu_2,\mu_3)}$, so
      \begin{equation*}
        \phi(x) = \e^{(\mu_3, \mu_1, \mu_2)}
      \end{equation*}
      and
      \begin{equation*}
        \phi^2(x) = \e^{(\mu_2, \mu_3, \mu_1)}.
      \end{equation*}
      Thus we can use $\phi$ to reduce to the cases where $\mu_3 \le \mu_2 \le
      \mu_1$ or $\mu_1 \le \mu_2 \le \mu_3$, then use $\psi$ to reduce to the
      cases which have $\mu_3 \le \mu_2 \le \mu_1$, and finally use
      left-multiplication by the appropriate $w$ to make sure $\nu = (i,j,k)$
      is strictly dominant.  So all cases where the permutation part of $x$ is
      the identity reduce to the cases where $\nu$ is strictly dominant,
      \begin{equation*}
        x = \begin{pmatrix}
          \e^d & 0 & 0 \\
          0 & \e^e & 0 \\
          0 & 0 & \e^f
        \end{pmatrix},
      \end{equation*}
      and $f \le e \le d$.

    \item The permutation part of $x$ is a transposition.  Since $\phi(s_2) =
      \phi^2(s_1) = \eta$, we can use $\phi$ to reduce to the cases where the
      permutation part of $x$ is $\eta$.  Then we can use left-multiplication
      by the appropriate $w$ to reduce to the cases where $\nu$ is strictly
      dominant.  Now if $\nu = (i, j, k)$ and $x = \e^{(d,e,f)}\eta$, then
      $\psi(\X) = X_{x'}(\e^{\nu'})$ where $\nu' = (-k, -j, -i)$ and $x' =
      \e^{(-f, -e, -d)}\eta$.  In particular, by using $\psi$ we can make sure
      that either $e$ is maximal in $\{d,e,f\}$ or that it's not minimal and $e
      \ge j$.  Note that if $\nu$ was strictly dominant, so is $\nu'$.  So all
      cases in which the permutation part of $x$ is a transposition reduce to
      the cases where $\nu = (i, j, k)$ is strictly dominant,
      \begin{equation*}
        x = \begin{pmatrix}
          0 & 0 & \e^d \\
          0 & \e^e & 0 \\
          \e^f & 0 & 0
        \end{pmatrix},
      \end{equation*}
      and one of the following four conditions holds:
      \begin{itemize}
      \item $f \le d < e$
      \item $f \le e \le d$ and $e \ge j$
      \item $d < f \le e$
      \item $d < e < f$ and $e \ge j$.
      \end{itemize}
    \item The permutation part of $x$ is a 3-cycle.  Since $\psi(s_1s_2) =
      s_2s_1$, we can use $\psi$ to reduce to the cases where the permutation
      part of $x$ is $s_2s_1$.  Further, given $x = \e^{(\mu_1, \mu_2,
        \mu_3)}s_2s_1$, we have:
      \begin{equation*}
        \phi(x) = \e^{(\mu_3-1, \mu_1, \mu_2 + 1)} s_2s_1
      \end{equation*}
      and
      \begin{equation*}
        \phi^2(x) = \e^{(\mu_2, \mu_3-1, \mu_1 + 1)} s_2s_1.
      \end{equation*}
      Thus one of $x, \phi(x), \phi^2(x)$ is of the form $\emu s_2s_1$, where
      $\mu = (\mu_1, \mu_2, \mu_3)$ and $\mu_3 > \max(\mu_1, \mu_2)$.
      Therefore, we can reduce all cases where the permutation part of $x$ is a
      3-cycle to the case
      \begin{equation*}
        x = \begin{pmatrix}
          0 & \e^d & 0 \\
          0 & 0 & \e^e \\
          \e^f & 0 & 0
        \end{pmatrix}
      \end{equation*}
      where $d < f$ and $e < f$.  We will treat this as two cases: $d \le e <
      f$, and $e < d < f$.  Using left-multiplication by the appropriate $w$ we
      can further reduce to the cases where $\nu$ is strictly dominant.
  \end{caselist}

  Note that in all cases, we have reduced to the situation where $\nu$ is
  strictly dominant.


  \subsection{Hexagons Corresponding to Certain Elements of $X$}
  We will now look at various types of elements of $X$ that can arise in the
  cases that we will consider.  For each such element $gI$, we will compute the
  possible hexagons that could correspond to it by finding, for each $U'$ (a
  conjugate of $U_1$ by an element of $W$) the possible $\w \in \W$ such that
  $g \in U'\w I$, as described in
  Section~\ref{sec:DeterminingUOrbitForElement}.  As described in that section,
  we will repeatedly look for the leftmost entry in a given row of some matrix
  which has valuation minimal amongst the entries in that row.  We will refer
  to such an entry as a ``minimal entry.''

  \subsubsection{Hexagons of Elements of $U_1 I$}
  \label{sec:hexagons-u}
  Let
  \begin{equation*}
    g =
    \begin{pmatrix}
      1 & a & b \\
      0 & 1 & c \\
      0 & 0 & 1
    \end{pmatrix}
  \end{equation*}
  be an element of $U_1$, and assume that it satisfies the following
  conditions:
  \begin{itemize}
    \item If $\val(b) \ge 0$, then $\val(a) < 0$.
    \item If $\val(c) \ge 0$, then $\val(b-ac) < 0$.
    \item If $\val(b) \ge \val(a)$, then $\val(b-ac) < 0$.
  \end{itemize}

  If $U' = U_1$, we look at the rows of $g$ in the order 3,2,1.  There is only
  one nonzero entry in the third row.  Once we have eliminated the other
  entries in the third column, the matrix we are left with is
  \begin{equation} \label{eq:w1-row3}
    \begin{pmatrix}
      1 & a & 0 \\ 
      0 & 1 & 0 \\
      0 & 0 & 1
    \end{pmatrix}.
  \end{equation}
  There is only one nonzero entry in the second row, so in this case
  \begin{equation*}\w = \begin{pmatrix}
      1 & 0 & 0 \\
      0 & 1 & 0 \\
      0 & 0 & 1
    \end{pmatrix}.
  \end{equation*}

  If $U' = s_1 U_1 s_1$, we look at the rows of $g$ in the order 3,1,2.  After
  the first step our matrix is the one in (\ref{eq:w1-row3}).  Now we look for
  the minimal entry in the first row of this simplified matrix.  Which entry in
  the first row is minimal depends on $\val(a)$, and we see that
  \begin{equation*}
    \w = \begin{pmatrix}
      0 & \e^{\val(a)} & 0 \\
      \e^{-\val(a)} & 0 & 0 \\
      0 & 0 & 1
    \end{pmatrix}
  \end{equation*}
  if $\val(a) < 0$ and 
  \begin{equation*}\w = \begin{pmatrix}
      1 & 0 & 0 \\
      0 & 1 & 0 \\
      0 & 0 & 1
    \end{pmatrix}.
  \end{equation*}
  otherwise.

  If $U' = s_2 U_1 s_2$, we look at the rows of $g$ in the order 2,3,1.  In the
  second row, the minimal entry depends on $\val(c)$, but in either case there
  is only one nonzero entry in the third row.  We see that
  \begin{equation*}
    \w = \begin{pmatrix}
      1 & 0 & 0 \\
      0 & 0 & \e^{\val(c)} \\
      0 & \e^{-\val(c)}  & 0
    \end{pmatrix}
  \end{equation*}
  if $\val(c) < 0$ and 
  \begin{equation*}\w = \begin{pmatrix}
      1 & 0 & 0 \\
      0 & 1 & 0 \\
      0 & 0 & 1
    \end{pmatrix}.
  \end{equation*}
  otherwise.

  If $U' = s_2s_1 U_1 s_1s_2$, we look at the rows of $g$ in the order 2,1,3.
  If $\val(c) < 0$, then the minimal entry in the second row is $c$, and once
  we have eliminated the other entries in the second row and third column the
  matrix we are left with is
  \begin{equation*}\begin{pmatrix}
      1 & a - \frac{b}{c} & 0 \\
      0 & 0 & c \\
      0 & -\frac{1}{c} & 0
    \end{pmatrix}.
  \end{equation*}
  The minimal entry in the first row depends on how $\val(b-ac)$ and $\val(c)$
  compare.  If $\val(b-ac) \ge \val(c)$, then the minimal entry is 1, and
  \begin{equation*}
    \w = \begin{pmatrix}
      1 & 0 & 0 \\
      0 & 0 & \e^{\val(c)} \\
      0 & \e^{-\val(c)}  & 0
    \end{pmatrix}.
  \end{equation*}
  Otherwise, the minimal entry is $a - b/c$ and
  \begin{equation*}
    \w = \begin{pmatrix}
      0 & \e^{\val(b-ac) - \val(c)} & 0 \\
      0 & 0 & \e^{\val(c)} \\
      \e^{-\val(b-ac)} & 0 & 0
    \end{pmatrix}.
  \end{equation*}
  If, on the other hand, $\val(c) \ge 0$, then the minimal entry in the second
  row is $1$, and once we have eliminated the other entries in the second row
  and second column the matrix we are left with is
  \begin{equation*}\begin{pmatrix}
      1 & 0 & b-ac \\
      0 & 1 & 0 \\
      0 & 0 & 1
    \end{pmatrix}.
  \end{equation*}
  Since in this case, by assumption, $\val(b-ac) < 0$, $b-ac$ is the minimal
  entry in the first row, and we see that
  \begin{equation*}\w = \begin{pmatrix}
      0 & 0 & \e^{\val(b-ac)} \\
      0 & 1 & 0 \\
      \e^{-\val(b-ac)} & 0 & 0
    \end{pmatrix}.
  \end{equation*}

  If $U' = s_1s_2 U_1 s_2s_1$, we look at the rows of $g$ in the order 1,3,2.
  Since $\val(a) < 0$ if $\val(b) \ge 0$, the minimal entry in the first row is
  either $a$ or $b$.  If $\val(b) < \val(a)$, this entry is $b$, and after
  eliminating the other entries in the first row and third column we are left
  with
  \begin{equation}\label{eq:w1-row1-b-less-a}
    \begin{pmatrix}
      0 & 0 & b \\
      -\frac{c}{b} & 1 - \frac{ac}{b} & 0 \\
      -\frac{1}{b} & -\frac{a}{b} & 0
    \end{pmatrix}.
  \end{equation}
  In this case, $\w$ depends on how $\val(a)$ compares to 0.  If $\val(a) < 0$,
  we get
  \begin{equation*}\w = \begin{pmatrix}
      0 & 0 & \e^{\val(b)} \\
      \e^{-\val(a)} & 0 & 0 \\
      0 & \e^{\val(a) - \val(b)} & 0  
    \end{pmatrix}.
  \end{equation*}
  Otherwise, we get 
  \begin{equation*}\w = \begin{pmatrix}
      0 & 0 & \e^{\val(b)} \\
      0 & 1 & 0 \\
      \e^{-\val(b)} & 0 & 0
    \end{pmatrix}.
  \end{equation*}
  If, instead, $\val(b) \ge \val(a)$, then the minimal entry in the first row
  is $a$, and after eliminating the other entries in the first row and second
  column we are left with
  \begin{equation}\label{eq:w1-row1-b-ge-a}
    \begin{pmatrix}
      0 & a & 0 \\
      -\frac{1}{a} & 0 & c - \frac{b}{a} \\
      0 & 0 & 1
    \end{pmatrix}.
  \end{equation}
  There is only one nonzero entry in the third row, so we get
  \begin{equation*}
    \w = \begin{pmatrix}
      0 & \e^{\val(a)} & 0 \\
      \e^{-\val(a)} & 0 & 0 \\
      0 & 0 & 1
    \end{pmatrix}.
  \end{equation*}

  If $U' = s_1s_2s_1 U_1 s_1s_2s_1$, we look at the rows of $g$ in the order
  1,2,3. After the first step, if $\val(b) < \val(a)$, we are left with the
  matrix in (\ref{eq:w1-row1-b-less-a}).  The minimal entry in the second row
  depends on how $\val(b-ac)$ compares to $\val(c)$.  If $\val(b-ac) <
  \val(c)$, then we get
  \begin{equation*}\w = \begin{pmatrix}
      0 & 0 & \e^{\val(b)} \\
      0 & \e^{\val(b-ac) - \val(b)} & 0 \\
      \e^{-\val(b-ac)} & 0 & 0
    \end{pmatrix}.
  \end{equation*}
  Otherwise, we get
  \begin{equation*}\w = \begin{pmatrix}
      0 & 0 & \e^{\val(b)} \\
      \e^{\val(c)-\val(b)} & 0 & 0 \\
      0 & \e^{-\val(c)} & 0  
    \end{pmatrix}.
  \end{equation*}
  If, on the other hand, $\val(b) \ge \val(a)$, then we are left with the
  matrix in (\ref{eq:w1-row1-b-ge-a}).  Since in this case $\val(b-ac) <
  0$ by assumption, we get
  \begin{equation*}
    \w = \begin{pmatrix}
      0 & \e^{\val(a)} & 0 \\
      0 & 0 & \e^{\val(b-ac) - \val(a)} \\
      \e^{-\val(b-ac)} & 0 & 0
    \end{pmatrix}.
  \end{equation*}

  \subsubsection{Hexagons of Elements of $U_1 s_1 I$}
  \label{sec:hexagons-u-s1}
  Let
  \begin{equation*}
    g =
    \begin{pmatrix}
      a & 1 & b \\
      1 & 0 & c \\
      0 & 0 & 1
    \end{pmatrix}
  \end{equation*}
  be an element of $U_1 s_1$, and assume that it satisfies the following
  conditions:
  \begin{itemize}
    \item $\val(c) < 0$.
    \item If $\val(a) \le 0$, then $\val(b) < \val(a)$.
  \end{itemize}

  If $U' = U_1$, we look at the rows of $g$ in the order 3,2,1.  There is only
  one nonzero entry in the third row.  Once we have eliminated the other
  entries in the third column, the matrix we are left with is
  \begin{equation}\label{eq:w2-row3}
    \begin{pmatrix}
      a & 1 & 0 \\
      1 & 0 & 0 \\
      0 & 0 & 1
    \end{pmatrix}.
  \end{equation}
  There is only one nonzero entry in the second row, so we get 
  \begin{equation*}
    \w = \begin{pmatrix}
      0 & 1 & 0 \\
      1 & 0 & 0 \\
      0 & 0 & 1
    \end{pmatrix}.
  \end{equation*}

  If $U' = s_1 U_1 s_1$, we look at the rows of $g$ in the order 3,1,2.  After
  the first step our matrix is the one in (\ref{eq:w2-row3}).  Which entry in
  the first row is minimal depends on $\val(a)$, and we see that
  \begin{equation*}
    \w = \begin{pmatrix}
      \e^{\val(a)} & 0 & 0 \\
      0 & \e^{-\val(a)} & 0 \\
      0 & 0 & 1
    \end{pmatrix}
  \end{equation*}
  if $\val(a) \le 0$ and 
  \begin{equation*}\w = \begin{pmatrix}
      0 & 1 & 0 \\
      1 & 0 & 0 \\
      0 & 0 & 1
    \end{pmatrix}.
  \end{equation*}
  otherwise.

  If $U' = s_2 U_1 s_2$, we look at the rows of $g$ in the order 2,3,1.  Since
  by assumption $\val(c) < 0$, $c$ is the minimal entry in the second row.
  After eliminating the other entries in the second row and third column, we
  are left with
  \begin{equation}\label{eq:w2-row2}
    \begin{pmatrix}
      a-\frac{b}{c} & 1 & 0 \\
      0  & 0 & c \\
      -\frac{1}{c} & 0 & 0
    \end{pmatrix}.
  \end{equation}
  There is only one nonzero entry in the third row, so we get
  \begin{equation*}
    \w = \begin{pmatrix}
      0 & 1 & 0 \\
      0 & 0 & \e^{\val(c)} \\
      \e^{-\val(c)} & 0 & 0
    \end{pmatrix}.
  \end{equation*}

  If $U' = s_2s_1 U_1 s_1s_2$, we look at the rows of $g$ in the order 2,1,3.
  After the first step our matrix is the one in (\ref{eq:w2-row2}).  The
  minimal entry in the first row depends on how $\val(b-ac)$ compares to
  $\val(c)$.  If $\val(b-ac) \le \val(c)$, then the minimal entry is the first
  one, and we get
  \begin{equation*}
    \w = \begin{pmatrix}
      \e^{\val(b-ac)-\val(c)} & 0 & 0 \\
      0 & 0 & \e^{\val(c)} \\
      0 & \e^{-\val(b-ac)} & 0
    \end{pmatrix}.
  \end{equation*}
  Otherwise, the minimal entry is the second one, and we get
  \begin{equation*}
    \w = \begin{pmatrix}
      0 & 1 & 0 \\
      0 & 0 & \e^{\val(c)} \\
      \e^{-\val(c)} & 0 & 0
    \end{pmatrix}.
  \end{equation*}

  If $U' = s_1s_2 U_1 s_2s_1$, we look at the rows of $g$ in the order 1,3,2.
  Since by assumption either $\val(a) > 0$ or $\val(b) < \val(a)$, the minimal
  entry is either 1 or $b$.  If $\val(b) < 0$, we eliminate the other entries
  in the first row and third column and are left with
  \begin{equation}\label{eq:w2-row1-b-less-zero}
    \begin{pmatrix}
      0 & 0 & b \\
      1-\frac{ac}{b} & -\frac{c}{b} & 0 \\
      -\frac{a}{b} & -\frac{1}{b} & 0
    \end{pmatrix}.
  \end{equation}
  In this case, we see that
  \begin{equation*}
    \w = \begin{pmatrix}
      0 & 0 & \e^{\val(b)} \\
      0 & \e^{-\val(a)} & 0  \\
      \e^{\val(a)-\val(b)} & 0 & 0
    \end{pmatrix}
  \end{equation*}
  if $\val(a) \le 0$ and 
  \begin{equation*}
    \w = \begin{pmatrix}
      0 & 0 & \e^{\val(b)} \\
      1 & 0 & 0  \\
      0 & \e^{-\val(b)} & 0
    \end{pmatrix}
  \end{equation*}
  if $\val(a) > 0$.  If, on the other hand, $\val(b) \ge 0$, then 1 is the
  minimal entry in the first row, and after eliminating the other entries in
  that row and the second column we are left with
  \begin{equation}\label{eq:w2-row1-b-ge-zero}
    \begin{pmatrix}
      0 & 1 & 0 \\
      1 & 0 & c \\
      0 & 0 & 1
    \end{pmatrix}.
  \end{equation}
  Since there is only one nonzero entry in the third row, we get 
  \begin{equation*}\w = \begin{pmatrix}
      0 & 1 & 0 \\
      1 & 0 & 0 \\
      0 & 0 & 1
    \end{pmatrix}.
  \end{equation*}

  If $U' = s_1s_2s_1 U_1 s_1s_2s_1$, we look at the rows of $g$ in the order
  1,2,3.  After the first step, if $\val(b) < 0$ we are left with the matrix in
  (\ref{eq:w2-row1-b-less-zero}).  The minimal entry in the second row depends
  on how $\val(b-ac)$ compares to $\val(c)$.  We get
  \begin{equation*}
    \w = \begin{pmatrix}
      0 & 0 & \e^{\val(b)} \\
      \e^{\val(b-ac)-\val(b)} & 0 & 0  \\
      0 & \e^{-\val(b-ac)} & 0
    \end{pmatrix}
  \end{equation*}
  if $\val(b-ac) \le \val(c)$ and 
  \begin{equation*}
    \w = \begin{pmatrix}
      0 & 0 & \e^{\val(b)} \\
      0 & \e^{\val(c) - \val(b)} & 0  \\
      \e^{-\val(c)} & 0 & 0
    \end{pmatrix}
  \end{equation*}
  if $\val(b-ac) > \val(c)$.  If, on the other hand, $\val(b) \ge 0$, we are
  left with the matrix in (\ref{eq:w2-row1-b-ge-zero}).  Since $\val(c) < 0$ by
  assumption, in this case we get
  \begin{equation*}
    \w = \begin{pmatrix}
      0 & 1 & 0 \\
      0 & 0 & \e^{\val(c)} \\
      \e^{-\val(c)} & 0 & 0
    \end{pmatrix}.
  \end{equation*}

  \subsubsection{Hexagons of Elements of $U_1 s_2 I$}
  \label{sec:hexagons-u-s2}
  Let
  \begin{equation*}
    g =
    \begin{pmatrix}
      1 & b & a \\
      0 & c & 1 \\
      0 & 1 & 0
    \end{pmatrix}
  \end{equation*}
  be an element of $U_1 s_2$, and assume that it satisfies the following
  conditions:
  \begin{itemize}
    \item $\val(a) < 0$.
    \item $\val(b-ac) < \val(c)$.
    \item $\val(b-ac) < 0$.
  \end{itemize}

  If $U' = U_1$, we look at the rows of $g$ in the order 3,2,1.  There is only
  one nonzero entry in the third row.  Once we have eliminated the other
  entries in the second column, the matrix we are left with is
  \begin{equation}\label{eq:w3-row3}
    \begin{pmatrix}
      1 & 0 & a \\
      0 & 0 & 1 \\
      0 & 1 & 0
    \end{pmatrix}.
  \end{equation}
  There is only one nonzero entry in the second row, so we get 
  \begin{equation*}
    \w = \begin{pmatrix}
      1 & 0 & 0 \\
      0 & 0 & 1 \\
      0 & 1 & 0
    \end{pmatrix}.
  \end{equation*}

  If $U' = s_1 U_1 s_1$, we look at the rows of $g$ in the order 3,1,2.  After
  the first step our matrix is the one in (\ref{eq:w3-row3}).  Since
  $\val(a) < 0$, the lowest-valuation entry in the first row is $a$, and we get
  \begin{equation*}
    \w = \begin{pmatrix}
      0 & 0 & \e^{\val(a)} \\
      \e^{-\val(a)} & 0 & 0 \\
      0 & 1 & 0
    \end{pmatrix}.
  \end{equation*}

  If $U' = s_2 U s_2$, we look at the rows of $g$ in the order 2,3,1.  In the
  second row the minimal entry depends on the valuation of $c$.  If $\val(c)
  \le 0$, then the minimal entry is $c$.  Once we have eliminated the other
  entries in the second row and second column, the matrix we are left with is
  \begin{equation}
    \label{eq:w3-row2-c-min}
    \begin{pmatrix}
      1 & 0 & a - \frac{b}{c} \\ 
      0 & c & 0 \\
      0 & 0 & -\frac{1}{c}
      \end{pmatrix}.
  \end{equation}
  If $\val(c) > 0$, then the minimal entry in the second row is 1.  Once we
  have eliminated the entries in the second row and third column, the matrix we
  are left with is
  \begin{equation}
    \label{eq:w3-row2-1-min}
    \begin{pmatrix}
      1 & b-ac & 0\\ 
      0 & 0 & 1 \\
      0 & 1 & 0
      \end{pmatrix}.
  \end{equation}
  In either case, there is only one nonzero entry in the third row so we get
  \begin{equation*}
    \w = \begin{pmatrix}
      1 & 0 & 0 \\
      0 & \e^{\val(c)} & 0\\
      0 & 0 & \e^{-\val(c)}
    \end{pmatrix}
  \end{equation*}
  if $\val(c) \le 0$ and
  \begin{equation*}
    \w = \begin{pmatrix}
      1 & 0 & 0 \\
      0 & 0 & 1\\
      0 & 1 & 0
    \end{pmatrix}
  \end{equation*}
  if $\val(c) > 0$.

  If $U' = s_2s_1 U s_1s_2$, we look at the rows of $g$ in the order 2,1,3.  As
  for $U' = s_2Us_2$, after the first step we are left with either the matrix
  in (\ref{eq:w3-row2-c-min}) or the matrix in (\ref{eq:w3-row2-1-min}),
  depending on $\val(c)$.  If $\val(c) \le 0$, then because $\val(b-ac) <
  \val(c)$ the minimal entry in the first row is $a-b/c$.  So in this case
  \begin{equation*}
    \w = \begin{pmatrix}
      0 & 0 & \e^{\val(b-ac) - \val(c)}\\
      0 & \e^{\val(c)} & 0 \\
      \e^{-\val(b-ac)} & 0 & 0
    \end{pmatrix}.
  \end{equation*}
  If $\val(c) > 0$, then, because $\val(b-ac) < 0$, the minimal entry in the
   first row is $b-ac$.  So in this case
  \begin{equation*}
    \w =  \begin{pmatrix}
      0 & e^{\val(b-ac)} & 0\\ 
      0 & 0 & 1 \\
      e^{-\val(b-ac)} & 0 & 0
      \end{pmatrix}.
  \end{equation*}

  If $U' = s_1s_2 U s_2s_1$, we look at the rows of $g$ in the order 1,3,2.
  Since $\val(a) < 0$, the minimal entry must be $a$ or $b$.  If it is $a$,
  then once we have eliminated the other entries in the first row and third
  column the matrix we are left with is
  \begin{equation}
    \label{eq:w3-row1-a-min}
    \begin{pmatrix}
      0 & 0 & a \\ 
      -\frac{1}{a} & c - \frac{b}{a} & 0 \\
      0 & 1 & 0
    \end{pmatrix}.
  \end{equation}
  There is only one nonzero entry in the third row, so in this case 
  \begin{equation*}
    \w =  \begin{pmatrix}
      0 & 0 & \e^{\val(a)} \\ 
      \e^{-\val(a)} & 0 & 0 \\
      0 & 1 & 0 
      \end{pmatrix}.
  \end{equation*}
  If the minimal entry in the first row is $b$, then once we have eliminated
  the other entries in the first row and second column, the matrix we are left
  with is
  \begin{equation}
    \label{eq:w3-row1-b-min}
    \begin{pmatrix}
      0 & b & 0 \\ 
      -\frac{c}{b} & 0 & 1 - \frac{ac}{b} \\
      -\frac{1}{b} & 0 & -\frac{a}{b}
    \end{pmatrix}.
  \end{equation}
  Since $\val(a) < 0$, the minimal entry in the third row is $-a/b$, so in this
  case
  \begin{equation*}
    \w =  \begin{pmatrix}
      0 & \e^{\val(b)} & 0\\ 
      \e^{-\val(a)} & 0 & 0 \\
      0 & 0 & \e^{\val(a) - \val(b)}
      \end{pmatrix}.
  \end{equation*}

  If $U' = s_1s_2s_1 U s_1s_2s_1$, we look at the rows of $g$ in the order
  1,2,3. After the first step we are left with the matrix in
  (\ref{eq:w3-row1-a-min}) if $\val(a) < \val(b)$ or the matrix in
  (\ref{eq:w3-row1-b-min}) if $\val(a) \ge \val(b)$.  In the former case, since
  $\val(b-ac) < 0$, we have
  \begin{equation*}
    \w =  \begin{pmatrix}
      0 & 0 & \e^{\val(a)} \\ 
      0 & \e^{\val(b-ac) - \val(a)} & 0 \\
      \e^{-\val(b-ac)} & 0 & 0 
      \end{pmatrix}.
  \end{equation*}
  In the latter case, since $\val(b-ac) < c$, we have
  \begin{equation*}
    \w =  \begin{pmatrix}
      0 & \e^{\val(b)} & 0\\ 
      0 & 0 & \e^{\val(b-ac) - \val(b)} \\
      \e^{-\val(b-ac)} & 0 & 0
      \end{pmatrix}.
  \end{equation*}  
  
  \subsubsection{Hexagons of Elements of $U_1 s_2 s_1 I$}
  \label{sec:hexagons-u-s2-s1}
  Let
  \begin{equation*}
    g =
    \begin{pmatrix}
      b & 1 & a \\
      c & 0 & 1 \\
      1 & 0 & 0
    \end{pmatrix}
  \end{equation*}
  be an element of $U_1 s_2 s_1$, and assume that it satisfies the following
  conditions:
  \begin{itemize}
    \item $\val(c) \le 0$.
    \item $\val(a) \ge 0$.
    \item $\val(b) \ge \val(c)$.
  \end{itemize}
  Since $\val(a) \ge 0$, it can be eliminated by the right-action of $I$
  without changing anything else in $g$, so we can assume that $a = 0$.

  If $U' = U_1$, we look at the rows of $g$ in the order 3,2,1.  If $U' = s_1
  U_1 s_1$, we look at the rows of $g$ in the order 3,1,2.  In either case,
  there is only one nonzero entry in the third row, and once we have eliminated
  the other entries in the first column we are left with
  \begin{equation*}
    \w = \begin{pmatrix}
      0 & 1 & 0 \\
      0 & 0 & 1 \\
      1 & 0 & 0
    \end{pmatrix}.
  \end{equation*}

  If $U' = s_2 U_1 s_2$, we look at the rows of $g$ in the order 2,3,1.  If $U'
  = s_2s_1 U_1 s_1s_2$, we look at the rows of $g$ in the order 2,1,3.  In
  either case, by assumption $\val(c) \le 0$, so $c$ is the minimal entry in
  the second row.  After eliminating the other entries in the second row and
  first column, we are left with
  \begin{equation}\label{eq:w5-row2}
    \begin{pmatrix}
      0 & 1 & -\frac{b}{c} \\
      c & 0 & 0 \\
      0 & 0 & -\frac{1}{c}
    \end{pmatrix}.
  \end{equation}
  Since by assumption $\val(b) \ge \val(c)$, the $-b/c$ entry can be eliminated
  by the right-action of $I$ without changing anything else in $g$.  So we see
  that in both of these cases 
  \begin{equation*}
    \w = \begin{pmatrix}
      0 & 1 & 0 \\
      \e^{\val(c)} & 0 & 0 \\
      0 & 0 & \e^{-\val(c)}
    \end{pmatrix}.
  \end{equation*}

  If $U' = s_1s_2 U_1 s_2s_1$, we look at the rows of $g$ in the order 1,3,2.
  The minimal entry in the first row depends on the valuation of $b$.  If
  $\val(b) \le 0$, then $b$ is the minimal entry, and after eliminating the
  other entries in the first row and first column we are left with
  \begin{equation}\label{eq:w5-row1-b-le-zero}
    \begin{pmatrix}
      b & 0 & 0 \\
      0 & -\frac{c}{b} & 1 \\
      0& -\frac{1}{b} & 0
    \end{pmatrix}.
  \end{equation}
  There is only one nonzero entry in the third row, so in this case
  \begin{equation*}
    \w = \begin{pmatrix}
      \e^{\val(b)} & 0 & 0 \\
      0 & 0 & 1 \\
      0 & \e^{-\val(b)} & 0
    \end{pmatrix}.
  \end{equation*}
  If, on the other hand, $\val(b) > 0$, then the minimal entry in the first row
  is $1$, and after eliminating the $b$ we are left with
  \begin{equation}\label{eq:w5-row1-b-greater-zero}
    \begin{pmatrix}
      0 & 1 & 0 \\
      c & 0 & 1 \\
      1 & 0 & 0
    \end{pmatrix}.
  \end{equation}
  There is only one nonzero entry in the third row, so 
  \begin{equation*}
    \w = \begin{pmatrix}
      0 & 1 & 0 \\
      0 & 0 & 1 \\
      1 & 0 & 0
    \end{pmatrix}.
  \end{equation*}

  If $U' = s_1s_2s_1 U_1 s_1s_2s_1$, we look at the rows of $g$ in the order
  1,2,3.  After the first step, if $\val(b) \le 0$ we are left with the matrix
  in (\ref{eq:w5-row1-b-le-zero}).  Since $\val(c) \le \val(b)$, the minimal
  entry in the second row is $-c/b$, and we get
  \begin{equation*}
    \w = \begin{pmatrix}
      \e^{\val(b)} & 0 & 0 \\
      0 & \e^{\val(c) - \val(b)} & 0 \\
      0 & 0 & \e^{-\val(c)}
    \end{pmatrix}.
  \end{equation*}
  If, on the other hand, $\val(b) > 0$, we are left with the matrix in
  (\ref{eq:w5-row1-b-greater-zero}).  Since $\val(c) \le 0$ by assumption, the
  minimal entry in the second row is $c$ and we get
  \begin{equation*}
    \begin{pmatrix}
      0 & 1 & 0 \\
      \e^{\val(c)} & 0 & 0 \\
      0 & 0 & \e^{-\val(c)} 
    \end{pmatrix}.
  \end{equation*}

  \subsubsection{Hexagons of Elements of $U_1 s_1 s_2 s_1 I$}
  \label{sec:hexagons-u-s1-s2-s1}
  Let
  \begin{equation*}
    g =
    \begin{pmatrix}
      b & a & 1 \\
      c & 1 & 0 \\
      1 & 0 & 0
    \end{pmatrix}
  \end{equation*}
  be an element of $U_1 s_1s_2s_1$, and assume that it satisfies the following
  conditions:
  \begin{itemize}
    \item If $\val(b) > 0$, then $\val(a) \le 0$.
    \item $\val(b-ac) \le \val(c)$.
    \item $\val(b-ac) \le 0$.
  \end{itemize}

  If $U' = U_1$, we look at the rows of $g$ in the order 3,2,1.  There is only
  one nonzero entry in the third row.  Once we have eliminated the other
  entries in the first column, the matrix we are left with is
  \begin{equation} \label{eq:w4-row3}
    \begin{pmatrix}
      0 & a & 1 \\ 
      0 & 1 & 0 \\
      1 & 0 & 0
    \end{pmatrix}.
  \end{equation}
  There is only one nonzero entry in the second row, so in this case
  \begin{equation*}\w = \begin{pmatrix}
      0 & 0 & 1 \\
      0 & 1 & 0 \\
      1 & 0 & 0
    \end{pmatrix}.
  \end{equation*}

  If $U' = s_1 U_1 s_1$, we look at the rows of $g$ in the order 3,1,2.  After
  the first step our matrix is the one in (\ref{eq:w4-row3}).  Which entry in
  the first row is minimal depends on $\val(a)$, and we see that
  \begin{equation*}
    \w = \begin{pmatrix}
      0 & \e^{\val(a)} & 0 \\
      0 & 0 & \e^{-\val(a)}  \\
      1 & 0 & 0
    \end{pmatrix}
  \end{equation*}
  if $\val(a) \le 0$ and 
  \begin{equation*}\w = \begin{pmatrix}
      0 & 0 & 1 \\
      0 & 1 & 0 \\
      1 & 0 & 0
    \end{pmatrix}.
  \end{equation*}
  otherwise.
  
  If $U' = s_2 U_1 s_2$, we look at the rows of $g$ in the order 2,3,1.  In the
  second row, the minimal entry depends on $\val(c)$, but in either case there
  is only one nonzero entry in the third row.  We see that
  \begin{equation*}
    \w = \begin{pmatrix}
      0 & 0 & 1 \\
      \e^{\val(c)} & 0 & 0\\
      0 & \e^{-\val(c)}  & 0
    \end{pmatrix}
  \end{equation*}
  if $\val(c) \le 0$ and 
  \begin{equation*}\w = \begin{pmatrix}
      0 & 0 & 1 \\
      0 & 1 & 0 \\
      1 & 0 & 0
    \end{pmatrix}.
  \end{equation*}
  otherwise.

  If $U' = s_2s_1 U_1 s_1s_2$, we look at the rows of $g$ in the order 2,1,3.
  If $\val(c) \le 0$, then the minimal entry in the second row is $c$, and once
  we have eliminated the other entries in the second row and third column the
  matrix we are left with is
  \begin{equation*}\begin{pmatrix}
      0 & a - \frac{b}{c} & 1 \\
      c & 0 & 0 \\
      0 & -\frac{1}{c} & 0
    \end{pmatrix}.
  \end{equation*}
  Since $\val(b-ac) \le \val(c)$, the minimal entry in the first row is $a -
  b/c$, and
  \begin{equation*}
    \w = \begin{pmatrix}
      0 & \e^{\val(b-ac) - \val(c)} & 0 \\
      \e^{\val(c)} & 0 & 0 \\
      0 & 0 & \e^{-\val(b-ac)}
    \end{pmatrix}.
  \end{equation*}
  If, on the other hand, $\val(c) > 0$, then the minimal entry in the second
  row is $1$, and once we have eliminated the other entries in the second row
  and second column the matrix we are left with is
  \begin{equation*}\begin{pmatrix}
      b-ac & 0 & 1 \\
      0 & 1 & 0 \\
      1 & 0 & 0
    \end{pmatrix}.
  \end{equation*}
  Since in this case, by assumption, $\val(b-ac) \le 0$, $b-ac$ is the minimal
  entry in the first row, and we see that
  \begin{equation*}\w = \begin{pmatrix}
      \e^{\val(b-ac)}  & 0 & 0\\
      0 & 1 & 0 \\
      0 & 0 & \e^{-\val(b-ac)} 
    \end{pmatrix}.
  \end{equation*}

  If $U' = s_1s_2 U_1 s_2s_1$, we look at the rows of $g$ in the order 1,3,2.
  Since $\val(a) \le 0$ if $\val(b) > 0$, the minimal entry in the first row is
  either $a$ or $b$.  If $\val(b) \le \val(a)$, this entry is $b$, and after
  eliminating the other entries in the first row and third column we are left
  with
  \begin{equation}\label{eq:w4-row1-b-le-a}
    \begin{pmatrix}
      b & 0 & 0 \\
      0 & 1 - \frac{ac}{b} & -\frac{c}{b}  \\
      0 & -\frac{a}{b} & -\frac{1}{b}
    \end{pmatrix}.
  \end{equation}
  In this case, $\w$ depends on how $\val(a)$ compares to 0.  If $\val(a) \le
  0$, we get
    \begin{equation*}\w = \begin{pmatrix}
      \e^{\val(b)} & 0 &  0\\
      0 & 0 & \e^{-\val(a)}  \\
      0 & \e^{\val(a) - \val(b)} & 0  
    \end{pmatrix}.
  \end{equation*}
  Otherwise, we get 
  \begin{equation*}\w = \begin{pmatrix}
      \e^{\val(b)} & 0 &  0\\
      0 & 1 & 0 \\
      0 & 0 & \e^{-\val(b)} 
    \end{pmatrix}.
  \end{equation*}
  If, instead, $\val(b) > \val(a)$, then the minimal entry in the first row is
  $a$, and after eliminating the other entries in the first row and second
  column we are left with
  \begin{equation}\label{eq:w4-row1-b-g-a}
    \begin{pmatrix}
      0 & a & 0 \\
      c - \frac{b}{a} & 0 & -\frac{1}{a} \\
      1 & 0 & 0
    \end{pmatrix}.
  \end{equation}
  There is only one nonzero entry in the third row, so we get
  \begin{equation*}
    \w = \begin{pmatrix}
      0 & \e^{\val(a)} & 0 \\
      0 & 0 & \e^{-\val(a)} \\
      1 & 0 & 0
    \end{pmatrix}.
  \end{equation*}

  If $U' = s_1s_2s_1 U_1 s_1s_2s_1$, we look at the rows of $g$ in the order
  1,2,3. After the first step, if $\val(b) \le \val(a)$, we are left with the
  matrix in (\ref{eq:w4-row1-b-le-a}).  Since $\val(b-ac) \le \val(c)$, we get
  \begin{equation*}\w = \begin{pmatrix}
      \e^{\val(b)} & 0 & \\
      0 & \e^{\val(b-ac) - \val(b)} & 0 \\
      0 & 0 & \e^{-\val(b-ac)} 
    \end{pmatrix}.
  \end{equation*}
  If, on the other hand, $\val(b) > \val(a)$, then we are left with the
  matrix in (\ref{eq:w4-row1-b-g-a}).  Since in this case $\val(b-ac) \le
  0$ by assumption, we get
  \begin{equation*}
    \w = \begin{pmatrix}
      0 & \e^{\val(a)} & 0 \\
      \e^{\val(b-ac) - \val(a)}  & 0 & 0\\
      0 & 0 & \e^{-\val(b-ac)} 
    \end{pmatrix}.
  \end{equation*}

  \subsection{Stratifications of $\X$}
  For each of our possible values of $x$ and $\nu$, we will examine the
  intersections $\X \cap U_1 w I$ for various $w \in W$.  We will determine the
  possible hexagons that correspond to points of the intersection and use those
  to show that either Theorem~\ref{thm:Triviality-For-Nice-Stratification} or
  Theorem~\ref{thm:Triviality-For-One-Intersection} applies to $\X$.

  For every point in $\X \cap U_1 wI$, we can pick a representative $g \in
  U_1 w$.  Let
  \begin{equation*}
    g = \begin{pmatrix}
      1 & a & b \\
      0 & 1 & c \\
      0 & 0 & 1
    \end{pmatrix}w
  \end{equation*}
  for some $a,b,c \in L$.  Then
  \begin{equation}
    \label{eq:image-matrix}
    h = g^{-1}\en\s(g) = w^{-1}\begin{pmatrix}
      \e^i & \alpha & \beta \\
      0 & \e^j & \gamma \\
      0 & 0 & \e^k
    \end{pmatrix}w
  \end{equation} where
  \begin{align}
    \label{eq:alpha}
    \alpha & = \e^i\s(a) - \e^j a \\
    \label{eq:beta}
    \beta & = \e^i\s(b) - \e^k b - a(\e^j \s(c) - \e^k c) \\
    \label{eq:altbeta}
          &= \e^i\s(b) - \e^k b - a\gamma \\
    \label{eq:gamma}
    \gamma & = \e^j \s(c) - \e^k c. \\
    \noalign{\noindent Note that}
    \label{eq:minor}
    \beta\e^j - \alpha\gamma &= \e^{i+j}\s(b) - \e^{k+j} b - \e^i\s(a)\gamma
  \end{align}

  We will now look at each of the cases that we did not eliminate in
  Section~\ref{sec:Reduction-Steps}.

  \subsubsection{$x = \e^{(d, e, f)}s_2 s_1$, with $d \le e < f$}
  Let
  \begin{equation}
    x = \e^{(d, e, f)}s_1 s_2 s_1 = \begin{pmatrix}
        0 & \e^d & 0 \\
        0 & 0 & \e^e  \\
        \e^f & 0 & 0
      \end{pmatrix}
  \end{equation}
  where $d \le e < f$, with $d + e + f = 0$.  Let $\nu = (i, j, k)$
  with $i > j > k$ and $i + j + k = 0$.  We will show that the intersection $\X
  \cap U_1 wI$ is nonempty for only one value of $w \in W$, and that
  Theorem~\ref{thm:Triviality-For-One-Intersection} applies.

  Since $d \le e < f$, by
  Theorem~\ref{thm:DeterminingIwahoriDoubleCosetForElement} the entry in the
  first row and second column of $h$ must have valuation $d$ and the
  determinant of the $2 \times 2$ minor in the top right-hand corner must have
  valuation $d + e$.  That means that $w$ can only be one of $1$ and $s_2$,
  since for all other values of $w$ either that entry or the determinant of
  that minor is 0.

  If $w = 1$, then
  \begin{equation*}
    h = \begin{pmatrix}
      \e^i & \alpha & \beta \\
      0 & \e^j & \gamma \\
      0 & 0 & \e^k
    \end{pmatrix}.
  \end{equation*}
  Thus $\val(\alpha) = d$.  The valuation of the determinant of the
  \begin{equation*}
    \begin{pmatrix}\alpha & \beta \\ 0 & \e^k\end{pmatrix}
  \end{equation*}
  minor must be greater than $d + e$, so
  \begin{align*}
    \val(\alpha\e^k) & > d + e \\
    d + k &> d + e \\
    k & > e.
  \end{align*}

  If $w = s_2$, then
  \begin{equation*}
    h = \begin{pmatrix}
      \e^i & \beta & \alpha \\
      0 & \e^k & 0 \\
      0 & \gamma & \e^j
    \end{pmatrix}
  \end{equation*}
  The valuation of the determinant of the minor in the top right-hand corner
  must be $d + e$.  This gives us
  \begin{align*}
    \val(\alpha\e^k) &=  d + e  \\
    \noalign{\noindent but $d \le\val(\alpha)$, so}
    d+ k &\le d +e \\
    k &\le e
  \end{align*}
  
  Therefore, once $k$ and $e$ are fixed there is only one possible value of $w$
  for which $\X \cap U_1 wI$ might be nonempty.  If $k > e$ the intersection is
  only nonempty if $w = 1$, and if $k \le e$ the intersection is only nonempty
  if $w = s_2$.

  We now look at these two possibilities.

  \subsubsubsection{The case $k > e$}
  
  In this case, $\X \cap U_1wI$ is nonempty only if $w = 1$, so
  \begin{equation*}
    h = \begin{pmatrix}
      \e^i & \alpha & \beta \\
      0 & \e^j & \gamma \\
      0 & 0 & \e^k
    \end{pmatrix}.
  \end{equation*}
  By Theorem~\ref{thm:DeterminingIwahoriDoubleCosetForElement}, the necessary
  conditions for $h$ to be in $IxI$ include
  \begin{align}
    \label{eq:3-d-min-k-g-e-val-alpha}
    \val(\alpha) &= d \\
    \label{eq:3-d-min-k-g-e-val-beta}
    \val(\beta) &\ge d \\
    \label{eq:3-d-min-k-g-e-val-minor}
    \val(\alpha\gamma - \beta\e^j) &= d + e.
  \end{align}
  We will use these to determine the valuations of $a$, $b$, $c$, and $b -
  ac$.

  From (\ref{eq:alpha})~and~(\ref{eq:3-d-min-k-g-e-val-alpha}) and because $j > k
  > e \ge d$ by assumption we know that
  \begin{equation}
    \label{eq:3-d-min-k-g-e-val-a}
    \val(a) = d - j < 0.
  \end{equation}

  From (\ref{eq:3-d-min-k-g-e-val-beta}) and our assumption that $k > e$ we
  know that $\val(\beta\e^j) \ge d + j > d + k > d + e$.  Then
  (\ref{eq:gamma}),~(\ref{eq:3-d-min-k-g-e-val-minor})~and~(\ref{eq:3-d-min-k-g-e-val-alpha})
  tell us that
  \begin{align}
    \val(\alpha\gamma) &= d + e \nonumber\\
    d + \val(\gamma) &= d + e \nonumber \\
    \val(\gamma) &= e \nonumber \\
    \label{eq:3-d-min-k-g-e-val-c}
    \val(c) &= e - k < 0
  \end{align}

  Using (\ref{eq:3-d-min-k-g-e-val-a})~and~(\ref{eq:3-d-min-k-g-e-val-c}) we
  see that
  \begin{equation}
    \label{eq:3-d-min-k-g-e-trailing-term}
    \val(\e^j a\s(c)) = d + (e - k) < d
  \end{equation}
  and
  \begin{equation*}
    \val(\e^k a c) = d + e - j < d + (e - k) = \val(\e^j a\s(c)).
  \end{equation*}
  By~(\ref{eq:3-d-min-k-g-e-val-beta}), and~(\ref{eq:beta}), this means that
  \begin{align}
    \val(\e^i\s(b) - e^k b) &= d + e - j \nonumber \\
    \val(\e^k b) &= d + e - j \nonumber \\
    \label{eq:3-d-min-k-g-e-val-b}
    \val(b) &= (d-j) + (e - k) = i - f.
  \end{align}
  Comparing (\ref{eq:3-d-min-k-g-e-val-b}) to (\ref{eq:3-d-min-k-g-e-val-a}) we
  see that
  \begin{equation}
    \label{eq:3-d-min-k-g-e-val-b-l-a}
    \val(b) < \val(a),
  \end{equation}
  since $e < k$.

  Now from (\ref{eq:3-d-min-k-g-e-val-b}) we see that
  \begin{equation*}
    \val(\e^i \s(b)) = d + (e-k) + (i - j) > d + (e - k).
  \end{equation*}
  Since $\val(\beta) \ge d$, by (\ref{eq:3-d-min-k-g-e-trailing-term}) we must
  have
  \begin{align}
    \val(\e^k(b-ac)) &= d + (e-k) \nonumber \\
    \label{eq:3-d-min-k-g-e-val-b-m-ac}
    \val(b - ac) &= d + e - 2k = (d - k) + (e - k) < \val(c),
  \end{align}
  where the last inequality follows because $d - k \le e - k < 0$.

  Now $\val(a) < 0$ and $\val(b-ac) < 0$.  So the conditions of
  Section~\ref{sec:hexagons-u} are satisfied.  Since $\val(c) < 0$ and
  $\val(b-ac) < \val(c)$, the hexagon for $g$ has the vertices
  \begin{equation*}
    \heximages
	{\begin{pmatrix}
	    1 & 0 & 0 \\
	    0 & 1 & 0 \\
	    0 & 0 & 1
	\end{pmatrix}}
	{\begin{pmatrix}
	    0 & \e^{d-j} & 0 \\
	    \e^{j-d} & 0 & 0 \\
	    0 & 0 & 1
	\end{pmatrix}}
	{\begin{pmatrix}
	    1 & 0 & 0 \\
	    0 & 0 & \e^{e-k} \\
	    0 & \e^{k-e} & 0
	\end{pmatrix}}
	{\begin{pmatrix}
	    0 & \e^{d-k} & 0 \\
	    0 & 0 & \e^{e-k} \\
	    \e^{2k+f} & 0 & 0
	\end{pmatrix}}
	{\begin{pmatrix}
	    0 & 0 & \e^{i-f} \\
	    \e^{j-d} & 0 & 0 \\
	    0 & \e^{k-e} & 0
	\end{pmatrix}}
	{\begin{pmatrix}
	    0 & 0 & \e^{i-f} \\
	    0 & \e^{j-k} & 0 \\
	    \e^{2k+f} & 0 & 0
	\end{pmatrix}}.
  \end{equation*}
  This hexagon is completely determined by our choice of $i,j,k,d,e,f$.  So all
  elements of $\X \cap U_1 I$ have the same corresponding hexagon, and
  Theorem~\ref{thm:Triviality-For-One-Intersection} applies.

  \subsubsubsection{The case $k \le e$}
  In this case, $\X \cap U_1wI$ is nonempty only if $w = s_2$, so
  \begin{equation*}
    h = \begin{pmatrix}
      \e^i & \beta & \alpha \\
      0 & \e^k & 0 \\
      0 & \gamma & \e^j
    \end{pmatrix}.
  \end{equation*}
  By Theorem~\ref{thm:DeterminingIwahoriDoubleCosetForElement}, necessary
  conditions for $h$ to be in $IxI$ include
  \begin{align}
    \label{eq:3-d-min-k-le-e-val-beta}
    \val(\beta) &= d \\
    \label{eq:3-d-min-k-le-e-k-vs-d}
    \val(\e^k) &> d \implies k > d \\
    \label{eq:3-d-min-k-le-e-i-vs-f}
    \val(\e^{j+k}) &> d + e \implies j + k > d + e \implies i < f \\
    \label{eq:3-d-min-k-le-e-val-alpha}
    \val(\alpha\e^k) &= d + e \\
    \label{eq:3-d-min-k-le-e-val-minor}
    \val(\alpha\gamma - \beta\e^j) &> d + e.
  \end{align}
  We will use these to determine the valuations of $a$, $b$, $c$, and $b -
  ac$.

  From
  (\ref{eq:alpha}),~(\ref{eq:3-d-min-k-le-e-val-alpha}),~and~(\ref{eq:3-d-min-k-le-e-i-vs-f}),
  we know that
  \begin{equation}
    \label{eq:3-d-min-k-le-e-val-a}
    \val(a) = d + e - j - k = i - f < 0.
  \end{equation}

  \begin{lemma}
    \label{lemma:3-d-min-k-le-e-val-c}
    $\val(c) \le 0$ if and only if $e \ge j$, and when this happens
    $\val(c) = j - e$.
  \end{lemma}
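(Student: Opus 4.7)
The plan is to determine $\val(\gamma)$ in two independent ways and match the results, using that $\gamma = \e^j\s(c) - \e^k c$ is both an explicit expression in $c$ and an entry of $h$ constrained by the conditions at (\ref{eq:3-d-min-k-le-e-val-beta})--(\ref{eq:3-d-min-k-le-e-val-minor}).

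First, directly from $\gamma = \e^j\s(c) - \e^k c$ and the standing inequality $j > k$: if $\val(c) < 0$, then $\val(\e^k c) = k + \val(c) < j + \val(c) = \val(\e^j\s(c))$, so $\val(\gamma) = k + \val(c) < k$; if $\val(c) = 0$, then the $\e^k c$ term has valuation exactly $k$ and dominates, giving $\val(\gamma) = k$; and if $\val(c) > 0$, both summands lie in $\e^{k+1}\O_L$, so $\val(\gamma) > k$. Hence $\val(c) \le 0$ iff $\val(\gamma) \le k$, and in that case $\val(c) = \val(\gamma) - k$.

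Second, $\val(\alpha) = d + e - k$ by (\ref{eq:3-d-min-k-le-e-val-alpha}) and $\val(\beta) = d$ by (\ref{eq:3-d-min-k-le-e-val-beta}), so $\val(\alpha\gamma) = d + e - k + \val(\gamma)$ and $\val(\beta\e^j) = d + j$. A three-way case analysis on how $j$ compares to $e$ pins down $\val(\gamma)$ via the minor constraint $\val(\alpha\gamma - \beta\e^j) > d + e$: if $j > e$, then $\val(\beta\e^j) > d + e$ automatically, so $\val(\alpha\gamma) > d + e$ and $\val(\gamma) > k$; if $j = e$, then $\val(\beta\e^j) = d + e$ exactly, which forces $\val(\alpha\gamma) = d + e$ with the leading terms at level $\e^{d+e}$ cancelling, giving $\val(\gamma) = k$; and if $j < e$, then $\val(\beta\e^j) = d + j < d + e$, so $\alpha\gamma$ must contribute a matching leading term at $\e^{d+j}$, yielding $\val(\alpha\gamma) = d + j$ and hence $\val(\gamma) = j + k - e < k$.

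Matching the two computations gives the trichotomy $\val(c) > 0 \iff j > e$, $\val(c) = 0 \iff j = e$, and $\val(c) = j - e < 0 \iff j < e$. Combining the last two yields exactly the statement of the lemma: $\val(c) \le 0$ iff $e \ge j$, with $\val(c) = j - e$ on that range (the $j = e$ subcase giving $0 = j - e$ automatically). No step is genuinely hard; the mildly delicate point is the cancellation argument in the $j \le e$ subcases, which is a routine non-archimedean triangle-inequality computation using that $\val(\beta\e^j) \le d + e$.
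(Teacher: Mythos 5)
Your proof is correct and takes essentially the same approach as the paper: both pin down $\val(\gamma)=k+\val(c)$ from the formula for $\gamma$ using $j>k$, compute $\val(\alpha)=d+e-k$ and $\val(\beta\e^j)=d+j$ from the given conditions, and use the minor constraint $\val(\alpha\gamma-\beta\e^j)>d+e$ to force $\val(\alpha\gamma)$ and $\val(\beta\e^j)$ to either both exceed $d+e$ or coincide. The paper packages this as a single chain of biconditionals where you spell it out via a three-way case split on $j$ versus $e$, but the underlying idea is identical.
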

  \begin{proof}
    By (\ref{eq:3-d-min-k-le-e-val-minor}), $\val(\alpha\gamma) \le d + e$ if
    and only if $\val(\beta\e^j) \le d + e$.  By
    (\ref{eq:3-d-min-k-le-e-val-beta}),
    \begin{equation*}
      \val(\beta\e^j) = d + j.
    \end{equation*}
    By (\ref{eq:3-d-min-k-le-e-val-alpha})~and~(\ref{eq:gamma}),
    \begin{equation}
      \label{eq:3-d-min-k-le-e-val-alpha-gamma}
      \val(\alpha\gamma) = \val(c) + k + d + e - k = \val(c) + d + e.
    \end{equation}
    Now we see that
    \begin{align*}
      j \le e &\iff d + j \le d + e \\
              &\iff \val(c) + d + e \le d + e \\
              &\iff \val(c) \le 0.
    \end{align*}
    If $j \le e$, then, by (\ref{eq:3-d-min-k-le-e-val-minor}),
    \begin{equation*}
      \val(\alpha\gamma) = \val(\beta\e^j) = d + j,
    \end{equation*}
    and then, by (\ref{eq:3-d-min-k-le-e-val-alpha-gamma}), $\val(c) = j - e$.
  \end{proof}

  \begin{lemma}
    \label{lemma:3-d-min-k-le-e-val-b}
    $\val(b) \le \val(a)$ if and only if $e \ge i$ and when this happens
    $\val(b) = d + 2i$.
  \end{lemma}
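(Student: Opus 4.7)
The plan is to exploit the valuation condition~(\ref{eq:3-d-min-k-le-e-val-minor}) on the upper-right $2\times 2$ minor of $h$. Using~(\ref{eq:minor}) to unfold that minor, the condition becomes
\begin{equation*}
\val(\e^{i+j}\s(b) - \e^{k+j}b - \e^i\s(a)\gamma) > d+e.
\end{equation*}
Since $i > k$, the first term always has strictly larger valuation than the second, so the $b$-dependent part of the expression contributes valuation exactly $-i + \val(b)$. The whole analysis will therefore reduce to comparing $-i + \val(b)$ with $\val(\e^i\s(a)\gamma)$.

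First I would split on whether $e \ge j$ via Lemma~\ref{lemma:3-d-min-k-le-e-val-c}. When $e \ge j$, substituting $\val(c) = j - e$ and the value $\val(a) = d+e+i$ from~(\ref{eq:3-d-min-k-le-e-val-a}) gives $\val(\e^i\s(a)\gamma) = i + d$, collapsing the analysis to a comparison of $-i + \val(b)$ with $i + d$. When $e < j$, we have $\val(c) > 0$, and a short direct check of each possible comparison rules out $\val(b) \le \val(a)$; since $e < j < i$ in this range, this is consistent with the claimed equivalence.

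In the main range $e \ge j$, I would organize the argument as a trichotomy on how $\val(b)$ compares with $2i + d$. If $\val(b) < 2i + d$, the minor has valuation $-i + \val(b)$, which must exceed $d + e$; this forces $\val(b) > d+e+i = \val(a)$ and, for the range to be nonempty, $e < i$. If $\val(b) > 2i + d$, the minor has valuation $i + d$, which forces $i > e$ and $\val(b) > \val(a)$. Only in the third sub-case $\val(b) = 2i + d$, where the leading terms of $-\e^{k+j}b$ and $-\e^i\s(a)\gamma$ cancel, is $\val(b) \le \val(a)$ possible. Combining these observations yields both directions: if $\val(b) \le \val(a)$, we must be in the third sub-case, so $\val(b) = d + 2i$ and $d + 2i \le d+e+i$ forces $e \ge i$; conversely, if $e \ge i$, the first two sub-cases are ruled out (each requires $e < i$ or $i > e$), so $\val(b) = d + 2i \le d + e + i = \val(a)$.

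The main obstacle will be the cancellation sub-case when $e > i$ strictly. There the equality $\val(\e^{k+j}b) = \val(\e^i\s(a)\gamma) = i + d$ only forces the sum to have valuation at least $i + d$, whereas the minor condition demands valuation strictly exceeding $d + e$, a gap of $e - i$ extra orders. Since the lemma only asserts $\val(b) = d + 2i$ rather than identifying a specific $b$, I expect these additional orders of cancellation to merely constrain the lower-order coefficients of $b$ without altering its leading valuation; but verifying this demands careful bookkeeping of the Frobenius action against the leading behavior of $a\gamma$.
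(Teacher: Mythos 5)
Your argument is correct, and the computations match the paper's where they overlap; the difference is organizational. The paper cases on how $e$ compares to $j$ and $i$, deducing $\val(b)$ in each branch, whereas you case on how $\val(b)$ compares to $d+2i$, deducing which branch is consistent with the minor condition and the comparison between $e$ and $i$. Both rely on the same inputs: equation~(\ref{eq:minor}) applied to~(\ref{eq:3-d-min-k-le-e-val-minor}), the value $\val(a)=d+e+i$ from~(\ref{eq:3-d-min-k-le-e-val-a}), and Lemma~\ref{lemma:3-d-min-k-le-e-val-c} to evaluate $\val(\gamma)$. Your version has the small advantage of making the biconditional logic very transparent; the paper's version avoids even raising the cancellation question because it never conditions on $\val(b)$.

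The ``obstacle'' you flag in the final paragraph is not actually an obstacle, and I would drop that caveat. In your sub-case $\val(b)=d+2i$ with $e>i$, the additional $e-i$ orders of cancellation are indeed required of the coefficients of $b$, but the lemma only asserts the value of $\val(b)$, not that every element of $\e^{(d+2i)}\O_L^\times$ occurs as a $b$-coordinate. Since the lemma is a conditional statement about elements of $\X \cap U_1 s_2 I$ (it asserts nothing if that set is empty), and since sub-cases~1 and~2 are already ruled out under $e\ge i$ by pure valuation bookkeeping, the conclusion $\val(b)=d+2i$ follows with no further analysis of the Frobenius-twisted cancellation. Your trichotomy already does all the work; no ``careful bookkeeping of the Frobenius action'' is needed.

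One presentational gap: for $e<j$ you invoke ``a short direct check'' without writing it. The check is that $\val(c)>0$ gives $\val(\gamma)=k+\val(c)>k$, hence $\val(\e^i\s(a)\gamma) > i+(d+e+i)+k = d+e+(i-j) > d+e$, so~(\ref{eq:3-d-min-k-le-e-val-minor}) forces $-i+\val(b)>d+e$, i.e.\ $\val(b)>\val(a)$. You should spell this out, since it is the piece that makes the biconditional complete rather than just true on the subrange $e\ge j$.
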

  \begin{proof}
    From (\ref{eq:3-d-min-k-le-e-val-minor}) and (\ref{eq:minor}) we see that
    \begin{equation}
      \label{eq:3-d-min-k-le-e-expand-minor}
      \val(\e^{i+j}\s(b) - \e^{j+k}b - \e^i\s(a)\gamma) > d + e.
    \end{equation}
    Now we consider the three possible ways that $e$ can relate to $i$ and $j$.

    If $e \ge i$, then $i > j$ implies $e > j$.  In this case, by
    Lemma~\ref{lemma:3-d-min-k-le-e-val-c},
    (\ref{eq:gamma}),~and~(\ref{eq:3-d-min-k-le-e-val-a}),
    \begin{align}
      \label{eq:3-d-min-k-le-e-trailing-term-e-ge-j}
      \val(\e^i\s(a)\gamma) &= i + (d + e - j - k) + (j - e + k) \nonumber \\
                            &= d + i
    \end{align}
    Since $e \ge i$, $d + i \le d + e$.  But then
    (\ref{eq:3-d-min-k-le-e-expand-minor}) tells us that
    \begin{equation*}
      \val(\e^{i+j}\s(b) - \e^{j+k}b) = d + i.
    \end{equation*}
    Therefore
    \begin{align*}
      \val(b) &= d + i - j - k \\
              &= \val(a) + (i - e) \\
              &\le \val(a)
    \end{align*}
    since $i \le e$.  Since $-j -k = i$, in this case $\val(b) = d + 2i$.

    If $j \le e < i$, then (\ref{eq:3-d-min-k-le-e-trailing-term-e-ge-j}) still
    holds, but now $d + i > d + e$.  By (\ref{eq:3-d-min-k-le-e-expand-minor}),
    \begin{equation*}
      \val(\e^{i+j}\s(b) - \e^{j+k}b) > d + e
    \end{equation*}
    so $\val(b) > d + e - j - k = \val(a)$.

    If $e < j$, then, by Lemma~\ref{lemma:3-d-min-k-le-e-val-c}, $\val(c) > 0$.
    This means that by (\ref{eq:gamma})~and~(\ref{eq:3-d-min-k-le-e-val-a})
    \begin{align*}
      \val(\e^i\s(a)\gamma) &> i + d + e - j - k + k \\
                            &= d + e + (i - j) \\
                            &> d + e.
    \end{align*}
    So by the same argument as in the case when $j \le e < i$, $\val(b) >
    \val(a)$.
  \end{proof}

  \begin{lemma}
    \label{lemma:3-d-min-k-le-e-val-b-m-ac}
    $\val(b-ac) = d - k < 0$.
  \end{lemma}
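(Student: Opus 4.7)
The plan is to start from the identity $\e^k(b-ac) = \e^i\s(b) - \e^j a\s(c) - \beta$, obtained by expanding $\beta$ as in (\ref{eq:altbeta}) and substituting $a\gamma = \e^j a\s(c) - \e^k ac$. Since $\val(\beta) = d$ by (\ref{eq:3-d-min-k-le-e-val-beta}), once I show that each of $\val(\e^i\s(b))$ and $\val(\e^j a\s(c))$ strictly exceeds $d$, I can conclude that $\val(\e^k(b-ac)) = d$, i.e., $\val(b-ac) = d - k$. The inequality $d - k < 0$ then follows from (\ref{eq:3-d-min-k-le-e-k-vs-d}).

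To produce these bounds I would run through the same three sub-cases used in the proof of Lemma~\ref{lemma:3-d-min-k-le-e-val-b}: $e \ge i$, then $j \le e < i$, then $e < j$. In the first, the exact values $\val(c) = j-e$ and $\val(b) = d + 2i$ coming from Lemmas~\ref{lemma:3-d-min-k-le-e-val-c} and~\ref{lemma:3-d-min-k-le-e-val-b} give $\val(\e^i\s(b)) = d + 3i > d$ (since $i > 0$) and $\val(\e^j a\s(c)) = d + (j - k) > d$ (since $j > k$). In the second sub-case the formula $\val(c) = j - e$ still applies, so the $c$-term bound is unchanged; for the $b$-term, only $\val(b) > \val(a) = d + e + i$ is available, giving $\val(\e^i\s(b)) > 2i + d + e$, which exceeds $d$ precisely when $2i + e > 0$. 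The case hypothesis $k \le e$ combined with $i + j + k = 0$ gives $2i + e \ge 2i + k = i - j > 0$, so this works.

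In the third sub-case ($e < j$), Lemma~\ref{lemma:3-d-min-k-le-e-val-c} instead yields $\val(c) > 0$, and the $b$-term argument from the previous sub-case carries over verbatim. For the $c$-term, $\val(\e^j a\s(c)) > j + (i - f) = d + e - k$; the case hypothesis $k \le e$ makes $d + e - k \ge d$, and the strict inequality is preserved because $\val(c) > 0$ strictly, giving $\val(\e^j a\s(c)) > d$ as needed.

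The only real obstacle is that in the second and third sub-cases we have only the inequality $\val(b) > \val(a)$, not an exact value, so a priori $\val(\e^i\s(b))$ might fail to clear $d$. I expect to resolve this not by strengthening Lemma~\ref{lemma:3-d-min-k-le-e-val-b} but simply by exploiting the standing hypothesis $k \le e$, which converts the loose lower bound on $\val(b)$ into the strict inequality $2i + e > 0$ that is precisely what the argument requires.
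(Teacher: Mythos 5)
Your proof is correct and takes essentially the same approach as the paper's: rewrite the expansion of $\beta$ from (\ref{eq:altbeta}) to isolate $\e^k(b-ac)$, then show that the two competing terms $\e^i\s(b)$ and $\e^j a\s(c)$ each have valuation strictly greater than $d = \val(\beta)$, forcing $\val(\e^k(b-ac)) = d$. The only real difference is organizational: the paper splits on $e \ge j$ versus $e < j$, and in the first case obtains the bound $\val(\e^i\s(b)) > d$ indirectly (by first deducing $\val(\e^k b) \ge d$ from (\ref{eq:3-d-min-k-le-e-expand-beta}) rather than invoking Lemma~\ref{lemma:3-d-min-k-le-e-val-b}), whereas you split into the three sub-cases inherited from Lemma~\ref{lemma:3-d-min-k-le-e-val-b} and plug in its conclusions directly, using the standing hypothesis $k \le e$ to close the gap when only $\val(b) > \val(a)$ is available. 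Both routes are sound and rely on the same earlier lemmas.
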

  \begin{proof}
    From (\ref{eq:beta})~and~(\ref{eq:3-d-min-k-le-e-val-beta}) we see that
    \begin{equation}
      \label{eq:3-d-min-k-le-e-expand-beta}
      \val(\e^i\s(b) - \e^k b + \e^k ac - \e^j a\s(c)) = d
    \end{equation}
    
    If $e \ge j$, then by Lemma~\ref{lemma:3-d-min-k-le-e-val-c} and
    (\ref{eq:3-d-min-k-le-e-val-a})
    \begin{align*}
      \val(\e^k ac) &= k + (d + e - j - k) + (j - e) \nonumber \\
      \val(\e^k ac) &= d \\
      \noalign{\noindent and}
      \val(\e^j a\s(c)) &= j + (d + e - j - k) + (j - e) \nonumber \\
                        &= d + (j - k) \nonumber \\
      \val(\e^j a\s(c)) &> d.
    \end{align*}
    In this case, (\ref{eq:3-d-min-k-le-e-expand-beta}) tells us that
    $\val(\e^i\s(b) - \e^k b) \ge d$ so that $\val(\e^kb) \ge d$.  But then
    \begin{equation*}
      \val(\e^i\s(b)) \ge d + (i - k) > d.
    \end{equation*}
    Looking at (\ref{eq:3-d-min-k-le-e-expand-beta}) again we see that
    \begin{equation*}
      \val(\e^k(b - ac)) = d.
    \end{equation*}

    If $e < j$, then from
    Lemmas~\ref{lemma:3-d-min-k-le-e-val-c}~and~\ref{lemma:3-d-min-k-le-e-val-b},
    and (\ref{eq:3-d-min-k-le-e-val-a}) we see that $\val(c) > 0$ and $\val(b)
    > d + e - j - k$.  So in this case
    \begin{align*}
      \val(\e^i\s(b)) &> d + (e - k) + (i - j) \\
                      &> d
    \end{align*}
    since $e \ge k$ by assumption.  Also,
    \begin{align*}
      \val(\e^j a \s(c)) &> d + e - j - k  + j \\
                         &= d + (e - k) \\
                         &\ge d.
    \end{align*}
    By (\ref{eq:3-d-min-k-le-e-expand-beta}) again we see that
    \begin{equation*}
      \val(\e^k(b - ac)) = d.
    \end{equation*}

    So in either case, $\val(\e^k(b - ac)) = d$, which means $\val(b-ac) = d -
    k$.  By (\ref{eq:3-d-min-k-le-e-k-vs-d}), $d - k < 0$.
  \end{proof}

  Now $\val(a) < 0$, $\val(b-ac) < 0$, and $\val(b-ac) < \val(c)$.  So the
  conditions of Section~\ref{sec:hexagons-u-s2} are satisfied.  We have three
  cases:
  \begin{caselist}
    \item $e < j$.  Then $\val(c) > 0$ and $\val(b) > \val(a)$.  In this case,
      the hexagon for $g$ is
      \begin{equation*}
	\heximages
	    {\begin{pmatrix}
		1 & 0 & 0 \\
		0 & 0 & 1 \\
		0 & 1 & 0
	    \end{pmatrix}}
	    {\begin{pmatrix}
		0 & 0 & \e^{i - f} \\
		\e^{f-i} & 0 & 0 \\
		0 & 1 & 0
	    \end{pmatrix}}
	    {\begin{pmatrix}
		1 & 0 & 0 \\
		0 & 0 & 1 \\
		0 & 1 & 0
	    \end{pmatrix}}
	    {\begin{pmatrix}
		0 & e^{d-k} & 0\\ 
		0 & 0 & 1 \\
		\e^{k-d} & 0 & 0
	    \end{pmatrix}}
	    {\begin{pmatrix}
		0 & 0 & \e^{i - f} \\
		\e^{f-i} & 0 & 0 \\
		0 & 1 & 0
	    \end{pmatrix}}
	    {\begin{pmatrix}
		0 & 0 & \e^{i-f} \\ 
		0 & \e^{j-e} & 0 \\
		\e^{k-d} & 0 & 0
	    \end{pmatrix}}.
      \end{equation*}

      \item $j \le e < i$.  Then $j - e \val(c) \le 0$ and $\val(b) > \val(a)$.
      In this case, the hexagon for $g$ is
      \begin{equation*}
	\heximages
	    {\begin{pmatrix}
		1 & 0 & 0 \\
		0 & 0 & 1 \\
		0 & 1 & 0
	    \end{pmatrix}}
	    {\begin{pmatrix}
		0 & 0 & \e^{i - f} \\
		\e^{f-i} & 0 & 0 \\
		0 & 1 & 0
	    \end{pmatrix}}
	    {\begin{pmatrix}
		1 & 0 & 0 \\
		0 & \e^{j-e} & 0\\
		0 & 0 & \e^{e-j}
	    \end{pmatrix}}
	    {\begin{pmatrix}
		0 & 0 & \e^{i-f}\\
		0 & \e^{j-e} & 0 \\
		\e^{k-d} & 0 & 0
	    \end{pmatrix}}
	    {\begin{pmatrix}
		0 & 0 & \e^{i - f} \\
		\e^{f-i} & 0 & 0 \\
		0 & 1 & 0
	    \end{pmatrix}}
	    {\begin{pmatrix}
		0 & 0 & \e^{i-f} \\ 
		0 & \e^{j-e} & 0 \\
		\e^{k-d} & 0 & 0
	    \end{pmatrix}}.
      \end{equation*}

      \item $e \ge i$.  Then $j - e = \val(c) \le 0$ and $d + 2i = \val(b) \le
      \val(a)$.  In this case, the hexagon for $g$ is
      \begin{equation*}
	\heximages
	    {\begin{pmatrix}
		1 & 0 & 0 \\
		0 & 0 & 1 \\
		0 & 1 & 0
	    \end{pmatrix}}
	    {\begin{pmatrix}
		0 & 0 & \e^{i - f} \\
		\e^{f-i} & 0 & 0 \\
		0 & 1 & 0
	    \end{pmatrix}}
	    {\begin{pmatrix}
		1 & 0 & 0 \\
		0 & \e^{j-e} & 0\\
		0 & 0 & \e^{e-j}
	    \end{pmatrix}}
	    {\begin{pmatrix}
		0 & 0 & \e^{i-f}\\
		0 & \e^{j-e} & 0 \\
		\e^{k-d} & 0 & 0
	    \end{pmatrix}}
	    {\begin{pmatrix}
		0 & \e^{d+2i} & 0\\ 
		\e^{f-i} & 0 & 0 \\
		0 & 0 & \e^{e-i}
	    \end{pmatrix}}
	    {\begin{pmatrix}
		0 & \e^{d+2i} & 0\\ 
		0 & 0 & \e^{j-i} \\
		\e^{k-d} & 0 & 0
	    \end{pmatrix}}.
      \end{equation*}
  \end{caselist}
  In all three cases, the hexagon is completely determined by our choice of
  $i,j,k,d,e,f$.  So all elements of $\X \cap U_1 s_2 I$ have the same
  corresponding hexagon, and Theorem~\ref{thm:Triviality-For-One-Intersection}
  applies.

  \subsubsection{$x = \e^{(d, e, f)}s_2 s_1$, with $e < d < f$}
  Let
  \begin{equation}
    x = \e^{(d, e, f)}s_1 s_2 s_1 = \begin{pmatrix}
        0 & \e^d & 0 \\
        0 & 0 & \e^e  \\
        \e^f & 0 & 0
      \end{pmatrix}
  \end{equation}
  where $e < d < f$, with $d + e + f = 0$.  Let $\nu = (i, j, k)$
  with $i > j > k$ and $i + j + k = 0$.  We will show that the intersection $\X
  \cap U_1 wI$ is nonempty for only one value of $w \in W$, and that
  Theorem~\ref{thm:Triviality-For-One-Intersection} applies.

  Since $e < d < f$, by
  Theorem~\ref{thm:DeterminingIwahoriDoubleCosetForElement} the entry in the
  second row and third column of $h$ must have valuation $e$ and the
  determinant of the $2 \times 2$ minor in the top right-hand corner must have
  valuation $d + e$.  That means that $w$ can only be one of $1$ and $s_1$,
  since for all other values of $w$ either that entry or the determinant of
  that minor is 0.

  If $w = 1$, then
  \begin{equation*}
    h = \begin{pmatrix}
      \e^i & \alpha & \beta \\
      0 & \e^j & \gamma \\
      0 & 0 & \e^k
    \end{pmatrix}.
  \end{equation*}
  Thus $\val(\gamma) = e$.  The valuation of the determinant of the
  \begin{equation*}
    \begin{pmatrix} \e^i & \beta \\ 0 & \gamma \end{pmatrix} \end{equation*}
  minor must be greater than $d + e$.  This gives us
  \begin{align*}
    \val(\gamma\e^i) & > d + e \\
    e + i &> d + e \\
    i & > d.
  \end{align*}

  If $w = s_1$, then
  \begin{equation*}
    h = \begin{pmatrix}
      \e^j & 0 & \gamma \\
      \alpha & \e^i & \beta \\
      0 & 0 & \e^k
    \end{pmatrix}
  \end{equation*}
  The valuation of the determinant of the minor in the top right-hand corner
  must be $d + e$.  This gives us
  \begin{align*}
    \val(\gamma\e^i) &=  d + e  \\
    \noalign{\noindent but $e \le\val(\gamma)$, so}
    e+ i &\le d +e \\
    i &\le d
  \end{align*}
  
  Therefore, once $i$ and $d$ are fixed there is only one possible value of $w$
  for which $\X \cap U_1 w I$ might be nonempty.  If $i > d$ the intersection
  is only nonempty if $w = 1$, and if $i \le d$ the intersection is only
  nonempty if $w = s_1$.

  We now look at these two cases.

  \subsubsubsection{The case $i > d$}
  In this case, $\X \cap U_1 w I$ is only nonempty if $w = 1$, so
  \begin{equation*}
    h = \begin{pmatrix}
      \e^i & \alpha & \beta \\
      0 & \e^j & \gamma \\
      0 & 0 & \e^k
    \end{pmatrix}.
  \end{equation*}
  By Theorem~\ref{thm:DeterminingIwahoriDoubleCosetForElement}, the necessary
  conditions for $h$ to be in $IxI$ include
  \begin{align}
    \label{eq:case4_val_gamma}
    \val(\gamma) &= e \\
    \label{eq:case4_k_vs_e}
    \val(\e^k) &> e \implies k > e \\
    \label{eq:case4_val_beta}
    \val(\beta) &\ge e \\
    \label{eq:case4_i_vs_f}
    \val(\e^{j+k}) &> d + e \implies j + k > d + e \implies i < f \\
    \label{eq:case4_val_minor}
    \val(\alpha\gamma - \beta\e^j) &= d + e.
  \end{align}
  We will use these to determine the valuations of $a$, $b$, $c$, and $b -
  ac$.

  From (\ref{eq:gamma}), (\ref{eq:case4_val_gamma}), and
  (\ref{eq:case4_k_vs_e}), we know that
  \begin{equation}
    \label{eq:case4_val_c}
    \val(c) = e - k < 0.
  \end{equation}

  \begin{lemma}
    \label{lemma:case4_val_a}
    $\val(a) < 0$ if and only if $d < j$ and when this happens $\val(a) = d -
    j$.
  \end{lemma}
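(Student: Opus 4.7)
The plan is to deduce $\val(a)$ from the defining relation $\alpha = \e^i\s(a) - \e^j a$ together with the Iwahori-double-coset conditions already extracted in the preceding case analysis, namely $\val(\gamma) = e$, $\val(\beta) \ge e$, and $\val(\alpha\gamma - \beta\e^j) = d + e$. I will split the analysis into the two subcases $d < j$ and $d \ge j$. The key preliminary observation in both directions is that since $\nu$ is strictly dominant we have $i > j$, so the two terms in $\e^i\s(a) - \e^j a$ always have distinct valuations $i + \val(a)$ and $j + \val(a)$; hence no cancellation can occur, and $\val(\alpha) = j + \val(a)$ whenever $a \ne 0$.

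In the case $d < j$, combining $\val(\beta) \ge e$ with $j > d$ yields $\val(\beta\e^j) \ge e + j > d + e$, so the equation $\val(\alpha\gamma - \beta\e^j) = d + e$ forces $\val(\alpha\gamma) = d + e$, and together with $\val(\gamma) = e$ this gives $\val(\alpha) = d$. If $\val(a) \ge 0$ then both terms of $\alpha$ would have valuation at least $j > d$, a contradiction, so $\val(a) < 0$, and the preliminary observation then gives $\val(\alpha) = j + \val(a) = d$, i.e., $\val(a) = d - j < 0$ as required. Conversely, if $d \ge j$, I will argue by contradiction: assuming $\val(a) < 0$ gives $\val(\alpha) = j + \val(a) < j \le d$, so $\val(\alpha\gamma) = j + \val(a) + e < d + e$; but then $\val(\alpha\gamma - \beta\e^j) = d + e$ forces $\val(\alpha\gamma) = \val(\beta\e^j)$, i.e., $\val(\beta) = \val(a) + e < e$, contradicting $\val(\beta) \ge e$.

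The main subtle point will be keeping careful track of when a valuation relation is exact versus just a lower-bound inequality: the conditions from Theorem~\ref{thm:DeterminingIwahoriDoubleCosetForElement} mix the exact equation $\val(\alpha\gamma - \beta\e^j) = d + e$ with the inequality $\val(\beta) \ge e$, and the forward and reverse implications extract different consequences from this combination. Once that bookkeeping is in place, strict dominance of $\nu$ handles everything else by ruling out cancellation in $\e^i\s(a) - \e^j a$.
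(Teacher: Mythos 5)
Your proof is correct and follows essentially the same route as the paper: it combines the no-cancellation observation for $\alpha = \e^i\s(a) - \e^j a$ (forced by $i > j$) with the comparison of $\val(\alpha\gamma)$ against $\val(\beta\e^j)$ in the minor equation $\val(\alpha\gamma - \beta\e^j) = d+e$. The only cosmetic difference is that you prove the backward implication in contrapositive form ($d \ge j \Rightarrow \val(a) \ge 0$) while the paper argues directly ($\val(a) < 0 \Rightarrow d < j$); the ingredients and bookkeeping are identical.
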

  \begin{proof}
    If $d < j$, then, by (\ref{eq:case4_val_beta}), $\val(\beta e^j) > d + e$.
    But then, by (\ref{eq:case4_val_minor}), (\ref{eq:case4_val_gamma}), and
    (\ref{eq:alpha}),
    \begin{align*}
      \val(\alpha\gamma) &= d + e \\
      \val(\alpha) &= d \\
      \val(a) &= d - j < 0.
    \end{align*}

    Conversely, if $\val(a) < 0$, then, by (\ref{eq:alpha}) and
    (\ref{eq:case4_val_gamma}), $\val(\alpha\gamma) < j + e$.  Since, by
    (\ref{eq:case4_val_beta}), $\val(\beta \e^j) \ge j + e$, it follows from
    (\ref{eq:case4_val_minor}) that
    \begin{align*}
      \val(\alpha\gamma - \beta\e^j) &< j + e \\
      d + e &< j + e \\
      d &< j.  \qedhere
    \end{align*}
  \end{proof}

  \begin{lemma}
    \label{lemma:case4_val_b}
    $\val(b) = i - f < 0$ and $\val(b) < \val(a)$.
  \end{lemma}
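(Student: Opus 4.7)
The plan is to extract $\val(b)$ from the minor identity (\ref{eq:minor}) applied to (\ref{eq:case4_val_minor}). Rewrite
\begin{equation*}
\alpha\gamma - \beta\e^j = -\e^{i+j}\s(b) + \e^{j+k}b - \e^i\s(a)\gamma,
\end{equation*}
whose valuation is $d+e$ by (\ref{eq:case4_val_minor}). The bulk of the argument is to show that $\val(\e^i\s(a)\gamma) > d+e$, which then isolates the two $b$-terms on the right.

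For the $\e^i\s(a)\gamma$ bound I would split according to Lemma~\ref{lemma:case4_val_a}. If $d < j$, then $\val(a) = d-j$, and combined with $\val(\gamma) = e$ from (\ref{eq:case4_val_gamma}), this gives $\val(\e^i\s(a)\gamma) = d+e+(i-j) > d+e$ since $\nu$ is strictly dominant. If $d \ge j$, then $\val(a) \ge 0$, so $\val(\e^i\s(a)\gamma) \ge i+e > d+e$ because we are in the case $i > d$. Either way, this term can be ignored when reading off the valuation $d+e$, so $\val(\e^{i+j}\s(b) - \e^{j+k}b) = d+e$. By strict dominance $i > k$, so the two summands have distinct valuations $i+j+\val(b)$ and $j+k+\val(b)$; in particular $b\ne 0$, and the minimum wins, giving $j+k+\val(b) = d+e$, hence $\val(b) = d+e-j-k = i-f$ after using $i+j+k = d+e+f = 0$. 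This is negative by (\ref{eq:case4_i_vs_f}).

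For the inequality $\val(b) < \val(a)$, if $d \ge j$ then Lemma~\ref{lemma:case4_val_a} gives $\val(a) \ge 0 > i-f = \val(b)$, so the claim is immediate. If $d < j$, then $\val(a) = d-j$, and $\val(b) - \val(a) = (i-f)-(d-j) = (i+j)-(d+f) = (-k)-(-e) = e-k$, which is negative by (\ref{eq:case4_k_vs_e}).

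The only subtle point is the noncancellation step: I need to be sure that $\e^{i+j}\s(b)$ and $\e^{j+k}b$ cannot cancel to produce a valuation higher than $d+e$. This is handled cleanly because strict dominance $i > k$ forces their valuations to differ by a nonzero amount, regardless of what $b$ happens to be. Everything else is bookkeeping against the identities $i+j+k = 0$ and $d+e+f = 0$.
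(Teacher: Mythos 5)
Your proof is correct, and it takes a slightly cleaner route than the paper's. Both arguments split on whether $d < j$ or $d \ge j$ and read off $\val(b)$ from the fact that the surviving pair $\e^{i+j}\s(b) - \e^{j+k}b$ has valuation $d+e$; however, the paper only uses the minor identity (\ref{eq:case4_expand_minor}) in the case $d \ge j$, and in the case $d < j$ it instead unwinds the $\beta$-condition (\ref{eq:case4_expand_beta}), computing $\val(a\e^k c) = e + (d-j)$ and $\val(a\e^j\s(c)) = e + (d-k)$ and comparing them to isolate $\val(\e^i\s(b) - \e^k b)$. Your observation that $\val(\e^i\s(a)\gamma) = d+e+(i-j) > d+e$ already holds when $d < j$ (because $\val(a) = d-j$, $\val(\gamma) = e$, and $i > j$ by strict dominance) lets you reuse the minor expansion in both branches, so the only case distinction needed is in plugging in the value of $\val(a)$ from Lemma~\ref{lemma:case4_val_a}. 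That is a genuine streamlining, avoiding the extra bookkeeping with $a\e^k c$ versus $a\e^j\s(c)$. The noncancellation point you flagged is handled the same way in both treatments: $i \ne k$ forces the two $b$-terms to have distinct valuations, so the minimum $j+k+\val(b)$ is attained.
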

  \begin{proof}
    First, note that, by (\ref{eq:case4_i_vs_f}), $i - f < 0$.  So we need to
    show that $\val(b) = i - f$ and $\val(b) < \val(a)$.

    If $d < j$, then, by Lemma~\ref{lemma:case4_val_a}, $\val(a) = d
    - j$.  In this case, by (\ref{eq:case4_val_c}),
    \begin{align}
      \val(a\e^k c) &= d - j + k + e - k \nonumber \\
                    &= e + (d - j) \nonumber \\
                    &< e \nonumber \\
      \noalign{\noindent and}
      \val(ae^j\s(c)) &= d - j + j + e - k \nonumber \\
      \label{eq:case4_trailing_term_d_less_j}
                      &= e + (d - k) \\
                      &> e + (d - j). \nonumber
    \end{align}
    But by (\ref{eq:case4_val_beta})
    \begin{equation}
      \label{eq:case4_expand_beta}
      \val(\e^i\s(b) - \e^k b - a\e^j\s(c) + a\e^k c) \ge e.
    \end{equation}
    So we must have
    \begin{align*}
      \val(\e^i\s(b) - \e^k b) &= e + d - j \\
      \val(b) &= e + d - j - k \\
              &= i - f.
    \end{align*}
    In this case,
    \begin{align*}
      \val(b) - \val(a) &= i - f - (d - j) \\
                        &= i + j - (f + d) \\
                        &= e - k \\
                        &< 0 \\
      \noalign{\noindent by (\ref{eq:case4_k_vs_e}) so}
      \val(b) &< \val(a).
    \end{align*}

    If $d \ge j$, then, by Lemma~\ref{lemma:case4_val_a}, $\val(a) \ge 0$.  In
    this case, by (\ref{eq:case4_val_gamma}) and our assumption that $i >d$,
    \begin{align*}
      \val(\e^i \s(a)\gamma) &\ge e + i \\
                    &> e + d.
    \end{align*}
    But by (\ref{eq:case4_val_minor}) and (\ref{eq:minor})
    \begin{equation}
      \label{eq:case4_expand_minor}
      \val(\e^{i+j}\s(b) - \e^{j+k}b - \e^i\s(a)\gamma) = e + d.
    \end{equation}
    So we must have
    \begin{align*}
      \val(\e^{i+j}\s(b) - \e^{j+k}b) &= e + d \\
      \val(b) &= e + d - j - k \\
              &= i - f.
    \end{align*}
    Since in this case $\val(a) \ge 0$ and $\val(b) < 0$ , we see that $\val(b)
    < \val(a)$.
  \end{proof}

  \begin{lemma}
    \label{lemma:case4_val_b_m_ac}
    $\val(b - ac) < \val(c)$ if and only if $d < k$ and when this happens
    $\val(b - ac) - \val(c) = d - k$.
  \end{lemma}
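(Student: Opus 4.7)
The plan is to start from the expansion
\begin{equation*}
\beta = \e^i\s(b) - \e^k b - a\gamma = \e^i\s(b) - \e^k(b-ac) - a\e^j\s(c)
\end{equation*}
obtained by substituting (\ref{eq:gamma}) into (\ref{eq:beta}), and solve it for the quantity of interest:
\begin{equation*}
\e^k(b-ac) = \e^i\s(b) - a\e^j\s(c) - \beta.
\end{equation*}
The strategy is then to compute the valuation of each of the three terms on the right-hand side and see when one of them dominates strictly with valuation below $e$, which is where $\val(\e^k c) = e$ sits.

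The valuations of two of the three terms are immediate from earlier work. By Lemma~\ref{lemma:case4_val_b}, $\val(\e^i\s(b)) = i + (i-f) = 2i + d + e$. By (\ref{eq:case4_val_beta}), $\val(\beta) \ge e$. The interesting term is $a\e^j\s(c)$, whose valuation depends on $\val(a)$ via Lemma~\ref{lemma:case4_val_a}: if $d < j$ then $\val(a) = d - j$ and $\val(a\e^j\s(c)) = d + e - k$, while if $d \ge j$ then $\val(a) \ge 0$ and $\val(a\e^j\s(c)) \ge j + e - k > e$ (since $j > k$).

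I would then split into cases according to how $d$ compares to $j$ and $k$. Using $i+j+k=0$ and $i > j > k$, one checks $2i + d + e > e$ whenever $d \ge k$ (since then $2i+d > 2i+k = i-j > 0$), and one also checks $2i + d + e > d + e - k$ always (the difference is $i - j > 0$). The key split is then on $d$ versus $k$. If $d < k$, then necessarily $d < j$, so the three valuations are $2i + d + e$, $d + e - k$, and $\ge e$, and the middle one is strictly smaller than the other two; hence $\val(\e^k(b-ac)) = d + e - k$, giving $\val(b-ac) = d + e - 2k$ and $\val(b-ac) - \val(c) = d - k < 0$. If $d \ge k$, then in both subcases ($k \le d < j$ and $d \ge j$) all three terms have valuation $\ge e$, so $\val(\e^k(b-ac)) \ge e$ and thus $\val(b-ac) \ge e - k = \val(c)$.

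The main obstacle is simply organizing the case analysis without redundancy: the ``if'' direction requires showing that the $a\e^j\s(c)$ term uniquely attains the minimum valuation when $d < k$, which in turn requires the strict inequalities $2i + d + e > d + e - k$ and $d + e - k < e$; the ``only if'' direction requires handling the two remaining subcases uniformly to show the $\ge$ bound. Nothing deeper is needed, since the valuations of $a$, $b$, $c$ have all been pinned down by the preceding lemmas.
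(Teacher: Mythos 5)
Your proof is correct and follows essentially the same route as the paper's: you expand $\beta$ using (\ref{eq:gamma}), isolate $\e^k(b-ac)$, bound the valuations of the other terms using Lemmas~\ref{lemma:case4_val_a} and~\ref{lemma:case4_val_b}, and split on $d$ versus $k$ and $j$ exactly as the paper does. Solving explicitly for $\e^k(b-ac) = \e^i\s(b) - a\e^j\s(c) - \beta$ at the outset is a slightly cleaner way to organize the same computation.
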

  \begin{proof}
    In all cases,
    \begin{align}
      \val(\e^i\s(b)) &= i + i - f \nonumber \\
      &= i - j - k + d + e \nonumber \\
      &= e + (i - j) + (d - k) \nonumber \\
      \label{eq:case4_trailing_term2}
      &> e + (d - k).
    \end{align}
    
    If $d < k < j$, then (\ref{eq:case4_trailing_term_d_less_j}) holds.  Since
    $d < k$, $\val(ae^j\s(c)) = e + (d - k) < e$.  At the same time,
    (\ref{eq:case4_trailing_term2}) holds.  So for (\ref{eq:case4_expand_beta})
    to hold, we must have
    \begin{align*}
      \val(\e^k b - \e^k ac) &= e + (d-k) \\
      \val(b - ac)  &= e + (d - k) - k  \\
      \val(b - ac) &< e - k \\
      \noalign{\noindent and by (\ref{eq:case4_val_c})}
      \val(b - ac) &< \val(c)
    \end{align*}
    In this case, $\val(b-ac) - \val(c) = e + d - k - k - (e - k) = d - k$.

    If $k \le d < j$, then (\ref{eq:case4_trailing_term_d_less_j}) and
    (\ref{eq:case4_trailing_term2}) still hold, but now $d \ge k$ so that
    $\val(\e^i\s(b) - ae^j\s(c)) \ge e$.  So for
    (\ref{eq:case4_expand_beta}) to hold, we must have
    \begin{align*}
      \val(\e^k b - \e^k ac) &\ge e \\
      \val(b - ac) &\ge e - k  \\
      \noalign{\noindent and by (\ref{eq:case4_val_c})}
      \val(b-ac) &\ge \val(c) .
    \end{align*}

    If $d \ge j > k$, then, by Lemma~\ref{lemma:case4_val_a}, $\val(a) \ge 0$.
    In this case, $\val(a\e^j\s(c)) \ge j + (e - k) > e$ and, by
    (\ref{eq:case4_trailing_term2}), $\val(\e^i\s(b)) > e$.  Now by the same
    argument as the case when $k \le d <j$, $\val(b-ac) \ge \val(c)$.
  \end{proof}

  Now $\val(b) < 0$, $\val(c) < 0$, and $\val(b) < \val(a)$.  So the conditions
  of Section~\ref{sec:hexagons-u} are satisfied.  We have three cases:
  \begin{caselist}
    \item $d < k$.  Then $d - j = \val(a) < 0$ and $\val(b-ac) < \val(c)$, with
      $\val(b-ac) - \val(c) = d - k$.  In this case, the hexagon for $g$ is
      \begin{equation*}
	\heximages
	    {\begin{pmatrix}
		1 & 0 & 0 \\
		0 & 1 & 0 \\
		0 & 0 & 1
	    \end{pmatrix}}
	    {\begin{pmatrix}
		0 & \e^{d-j} & 0 \\
		\e^{j-d} & 0 & 0 \\
		0 & 0 & 1
	    \end{pmatrix}}
	    {\begin{pmatrix}
		1 & 0 & 0 \\
		0 & 0 & \e^{e-k} \\
		0 & \e^{k-e} & 0
	    \end{pmatrix}}
	    {\begin{pmatrix}
		0 & \e^{d-k} & 0 \\
		0 & 0 & \e^{e-k} \\
		\e^{2k+f} & 0 & 0
	    \end{pmatrix}}
	    {\begin{pmatrix}
		0 & 0 & \e^{i-f} \\
		\e^{j-d} & 0 & 0  \\
		0 & \e^{k-e} & 0
	    \end{pmatrix}}
	    {\begin{pmatrix}
		0 & 0 & \e^{i-f} \\
		0 & \e^{j-k} & 0  \\
		\e^{2k+f} & 0 & 0
	    \end{pmatrix}}
      \end{equation*}

    \item $k \le d < j$.  Then $d - j = \val(a) < 0$ and $\val(b-ac) \ge
      \val(c)$.  In this case, the hexagon for $g$ is
      \begin{equation*}
	\heximages
	    {\begin{pmatrix}
		1 & 0 & 0 \\
		0 & 1 & 0 \\
		0 & 0 & 1
	    \end{pmatrix}}
	    {\begin{pmatrix}
		0 & \e^{d-j} & 0 \\
		\e^{j-d} & 0 & 0 \\
		0 & 0 & 1
	    \end{pmatrix}}
	    {\begin{pmatrix}
		1 & 0 & 0 \\
		0 & 0 & \e^{e-k} \\
		0 & \e^{k-e} & 0
	    \end{pmatrix}}
	    {\begin{pmatrix}
		1 & 0 & 0 \\
		0 & 0 & \e^{e-k} \\
		0 & \e^{k-e} & 0
	    \end{pmatrix}}
	    {\begin{pmatrix}
		0 & 0 & \e^{i-f} \\
		\e^{j-d} & 0 & 0  \\
		0 & \e^{k-e} & 0
	    \end{pmatrix}}
	    {\begin{pmatrix}
		0 & 0 & \e^{i-f} \\
		\e^{j-d} & 0 & 0  \\
		0 & \e^{k-e} & 0
	    \end{pmatrix}}	  
      \end{equation*}

    \item $j \le d$.  Then $\val(a) \ge 0$ and $\val(b-ac) \ge \val(c)$.  In
      this case, the hexagon for $g$ is
      \begin{equation*}
	\heximages
	    {\begin{pmatrix}
		1 & 0 & 0 \\
		0 & 1 & 0 \\
		0 & 0 & 1
	    \end{pmatrix}}
	    {\begin{pmatrix}
		1 & 0 & 0 \\
		0 & 1 & 0 \\
		0 & 0 & 1
	    \end{pmatrix}}
	    {\begin{pmatrix}
		1 & 0 & 0 \\
		0 & 0 & \e^{e-k} \\
		0 & \e^{k-e} & 0
	    \end{pmatrix}}
	    {\begin{pmatrix}
		1 & 0 & 0 \\
		0 & 0 & \e^{e-k} \\
		0 & \e^{k-e} & 0
	    \end{pmatrix}}
	    {\begin{pmatrix}
		0 & 0 & \e^{i-f} \\
		0 & 1 & 0  \\
		\e^{f-i} & 0 & 0
	    \end{pmatrix}}
	    {\begin{pmatrix}
		0 & 0 & \e^{i-f} \\
		\e^{j-d} & 0 & 0  \\
		0 & \e^{k-e} & 0
	    \end{pmatrix}}
      \end{equation*}
  \end{caselist}
  In all three cases, the hexagon is completely determined by our choice of
  $i,j,k,d,e,f$.  So all elements of $\X \cap U_1 s_2 I$ have the same
  corresponding hexagon, and Theorem~\ref{thm:Triviality-For-One-Intersection}
  applies.
    
  \subsubsubsection{The case $i \le d$}
  In this case, $\X \cap U_1 w I$ is only nonempty if $w = s_1$, so
  \begin{equation*}
    h = \begin{pmatrix}
      \e^j & 0 & \gamma \\
      \alpha & \e^i & \beta \\
      0 & 0 & \e^k
    \end{pmatrix}.
  \end{equation*}
  By Theorem~\ref{thm:DeterminingIwahoriDoubleCosetForElement}, the necessary
  conditions for $h$ to be in $IxI$ include
  \begin{align}
    \label{eq:case3_val_beta}
    \val(\beta) &= e \\
    \label{eq:case3_val_gamma}
    \val(\e^i\gamma) &= d + e \\
    \label{eq:case3_val_minor}
    \val(\beta\e^j - \alpha\gamma) &> d + e.
  \end{align}
  We will use these to determine the valuations of $a$, $b$, $c$, and $a -
  b/c$.

  From (\ref{eq:gamma}) and (\ref{eq:case3_val_gamma}), and because $j < i \le
  d < f$ we see that
  \begin{equation}
    \label{eq:case3_val_c}
    \val(c) = d + e - i - k = j - f < 0.
  \end{equation}

  Since $j < i \le d$, (\ref{eq:case3_val_beta}) tells us that $\val(\e^j\beta)
  = j + e < d + e$.  But then from (\ref{eq:case3_val_minor}),
  (\ref{eq:case3_val_gamma}), (\ref{eq:alpha}) we see that
  \begin{align}
    \val(\alpha\gamma) &= j + e \nonumber \\
    \val(\alpha) &= j + e - (d + e - i) \nonumber \\
                 &= j + i - d \nonumber \\
    \label{eq:case3_val_a}
    \val(a) &= j + i - d - j \nonumber \\
            &= i - d \le 0.
  \end{align}

  Using (\ref{eq:case3_val_a}) and (\ref{eq:case3_val_c}) we see that
  \begin{align}
    \val(\e^j a\s(c)) &=  j + i - d  + (j -f) \nonumber \\
                      &=  (j+i) + j - (d + f) \nonumber \\
                      &= e + (j - k) > e \nonumber \\
    \noalign{\noindent and hence}
    \label{eq:case3_val_e_to_k_a_c}
    \val(\e^k a c) &= e + (j - k) + (k - j) = e.
  \end{align}
  Since, by (\ref{eq:case3_val_beta}), $\val(\beta) = e$, we can apply
  (\ref{eq:beta}) to see that
  \begin{align}
    \val(\e^i\s(b) - \e^k b - a(\e^j \s(c) - \e^k c)) &= e \nonumber \\
    \label{eq:case3_expand_beta}
    \val(\e^i\s(b) - \e^k b + \e^k ac)) &= e.
  \end{align}
  By (\ref{eq:case3_val_e_to_k_a_c}) and because $i > k$ we see that
  $\val(\e^kb) \ge e$, and hence $\val(\e^i\s(b)) > e$.  But then to satisfy
  (\ref{eq:case3_expand_beta}) we must have
  \begin{align}
    \val(\e^k(b-ac)) &= e \nonumber \\
    \label{eq:case3_val_b_m_ac}
    \val(b-ac) &= e-k.
  \end{align}
  Note that
  \begin{align}
    \val(b-ac) - \val(c) &= e - k - (j -f) \nonumber \\
                         &= i - d \nonumber \\
                         &\le 0, \nonumber \\
    \noalign{\noindent so}
    \val(b-ac) &\le \val(c).
  \end{align}

  By (\ref{eq:case3_val_minor}) and (\ref{eq:minor}),
  \begin{equation}
    \label{eq:case3_expand_minor}
    \val(\e^{i+j}\s(b) - \e^{j+k}b - \e^i\s(a)\gamma) > d + e.
  \end{equation}
  But by (\ref{eq:case3_val_a}) and (\ref{eq:case3_val_gamma}) and because $i
  \le d$ by assumption,
  \begin{align*}
   \val(\e^i\s(a)\gamma) &= i + (i - d) + (d + e - i) \\
                         &= i + e \\
                         &\le d + e.
  \end{align*}
  Since $i + j > j + k$, we must have
  \begin{align}
    \val(\e^{j+k}b) &= i + e \nonumber \\
    \val(b) &= i - (j + k) + e \nonumber \\
    \label{eq:case3_val_b}
    \val(b) &= e + 2i.
  \end{align}
  Note that
  \begin{align}
    \val(b) &= \val(a) + (e + i) + d \nonumber \\
            &= \val(a) + (i - f). \nonumber \\
    \noalign{\noindent Since $i \le d < f$,}
    \label{eq:case3_b_less_a}
    \val(b) &< \val(a).
  \end{align}

  Now $\val(c) < 0$ and $\val(b) < \val(a)$. So the conditions of
  Section~\ref{sec:hexagons-u-s1} are satisfied.  Since $\val(a) \le 0$,
  $\val(b) < 0$, and $\val(b-ac) \le \val(c)$, the hexagon for $g$ has the
  vertices
  \begin{equation*}
    \heximages
	{\begin{pmatrix}
	    0 & 1 & 0 \\
	    1 & 0 & 0 \\
	    0 & 0 & 1
	\end{pmatrix}}
	{\begin{pmatrix}
	    \e^{i-d} & 0 & 0 \\
	    0 & \e^{d-i} & 0 \\
	    0 & 0 & 1
	\end{pmatrix}}
	{\begin{pmatrix}
	    0 & 1 & 0 \\
	    0 & 0 & \e^{j-f} \\
	    \e^{f-j} & 0 & 0
	\end{pmatrix}}
	{\begin{pmatrix}
	    \e^{i-d} & 0 & 0 \\
	    0 & 0 & \e^{j-f} \\
	    0 & \e^{k-e} & 0
	\end{pmatrix}}
	{\begin{pmatrix}
	    0 & 0 & \e^{e+2i} \\
	    0 & \e^{d-i} & 0 \\
	    \e^{f - i} & 0 & 0
	\end{pmatrix}}
	{\begin{pmatrix}
	    0 & 0 & \e^{e+2i} \\
	    \e^{j-i} & 0 & 0 \\
	    0 & \e^{k-e} & 0
	\end{pmatrix}}.
  \end{equation*}
  This hexagon is completely determined by our choice of $i,j,k,d,e,f$.  So all
  elements of $\X \cap U_1 s_1 I$ have the same corresponding hexagon, and
  Theorem~\ref{thm:Triviality-For-One-Intersection} applies.

  \subsubsection{$x = \e^{(d, e, f)}s_1s_2s_1$, with $f \le d < e$}
  Let
  \begin{equation}
    x = \e^{(d, e, f)}s_1 s_2 s_1 = \begin{pmatrix}
        0 & 0 & \e^d \\
        0 & \e^e & 0 \\
        \e^f & 0 & 0
      \end{pmatrix}
  \end{equation}
  where $f \le d < e$, with $d + e + f = 0$.  Let $\nu = (i,j,k)$ with $i > j >
  k$ and $i + j + k = 0$.  We will show that in this case the intersection $\X
  \cap U_1 wI$ is nonempty only when $w = s_2 s_1$, and that
  Theorem~\ref{thm:Triviality-For-One-Intersection} applies.

  Since $f \le d < e$, by
  Theorem~\ref{thm:DeterminingIwahoriDoubleCosetForElement} the entry in the
  third row and first column of $h$ must have valuation $f$ and the determinant
  of the $2 \times 2$ minor which excludes the second column and second row
  must be $f + d$.  That means that $w$ can only be one of $s_1s_2$, $s_2s_1$,
  and $s_1s_2s_1$, since for all other values of $w$ the bottom-left entry is
  0.

  If $w = s_1s_2$, then
  \begin{equation*}
    h = \begin{pmatrix}
      \e^j & \gamma & 0 \\
      0 & \e^k & 0 \\
      \alpha & \beta & \e^i
    \end{pmatrix}.
  \end{equation*}
  In this case, the condition on the minor is that $j + i = f + d$, which means
  $k = e$.  But by assumption, $d < e$ and $f < e$, so
  \begin{align*}
    d + f &< 2e \\
    i + j &< 2k.
  \end{align*}
  This condition cannot be satisfied, since $i > k$ and $j > k$.  Therefore $\X
  \cap U_1 s_1 s_2 I = \emptyset$ in this case.

  If $w = s_1s_2s_1$, then
  \begin{equation*}
    h = \begin{pmatrix}
      \e^k & 0 & 0 \\
      \gamma & \e^j & 0 \\
      \beta & \alpha & \e^i
    \end{pmatrix}.
  \end{equation*}
  In this case, the condition on the minor is that $k + i = f + d$, which means
  $j = e$.  But by assumption, $d < e$ and $i > j$, so
  \begin{align*}
    f + d &= k + i \\
          &> k + j \\
          &= k + e \\
          &> k + d \\
    \noalign{\noindent and therefore}
        f &> k
  \end{align*}
  But for $h$ to be in $IxI$, we must have $\val(\e^k) \ge f$, which means $f
  \le k$.  These two conditions cannot both be satisfied, so $\X
  \cap U_1 s_1 s_2 s_1 I = \emptyset$ in this case.

  If $w = s_2s_1$, then
  \begin{equation}
    h = \begin{pmatrix}
      \e^k & 0 & 0 \\
      \beta & \e^i & \alpha \\
      \gamma & 0 & \e^j
    \end{pmatrix}.
  \end{equation}
  By Theorem~\ref{thm:DeterminingIwahoriDoubleCosetForElement}, the necessary
  conditions for $h$ to be in $IxI$ include
  \begin{align}
    \label{eq:2-e-max-f-min-val-gamma}
    \val(\gamma) &= f \\
    \label{eq:2-e-max-f-min-k-ge-f}
    \val(\e^k) &\ge f \implies k \ge f \\
    \label{eq:2-e-max-f-min-minor-equality}
    \val(\e^{j+k}) &= f + d \implies j + k = f + d \\
    \label{eq:2-e-max-f-min-val-minor}
    \val(\beta\e^j - \alpha\gamma) &> f + d\\
    \label{eq:2-e-max-f-min-val-alpha}
    \val(\e^k\alpha) &\ge f + d.
  \end{align}
  We will use these to determine the valuations of $a$, $b$, and $c$.

  First, we note that (\ref{eq:2-e-max-f-min-minor-equality}) implies that
  \begin{equation}
    \label{eq:2-e-max-f-min-e-equals-i}
    e = i.
  \end{equation}
  
  From (\ref{eq:2-e-max-f-min-val-gamma}), (\ref{eq:gamma}), and
  (\ref{eq:2-e-max-f-min-k-ge-f})  we see
  that \begin{equation}
    \label{eq:2-e-max-f-min-val-c}
    \val(c) = f - k \le 0.
  \end{equation}

  From (\ref{eq:2-e-max-f-min-val-alpha}), (\ref{eq:alpha}), and
  (\ref{eq:2-e-max-f-min-minor-equality}) we see that
  \begin{align}
    \val(a) + j + k &\ge d + f \nonumber \\
    \label{eq:2-e-max-f-min-val-a}
    \val(a) &\ge 0
  \end{align}

  Now we see from (\ref{eq:2-e-max-f-min-val-a}),
  (\ref{eq:2-e-max-f-min-val-gamma}), and
  (\ref{eq:2-e-max-f-min-e-equals-i}) that
  \begin{align}
    \val(\e^i\s(a)\gamma) &\ge i + f \nonumber \\
    &= f + e \nonumber \\
    \label{eq:2-e-max-f-min-trailing-term}
    &> f + d.
  \end{align}
  But from (\ref{eq:2-e-max-f-min-val-minor}) and (\ref{eq:minor}) we know that
  \begin{equation*}
    \val(\e^{i+j}\s(b) - \e^{j+k}b - \e^i\s(a)\gamma) > f + d.
  \end{equation*}
  Since $i > k$, this, combined with
  (\ref{eq:2-e-max-f-min-trailing-term}), implies that
  \begin{align}
    \val(\e^{j+k}b) &> f + d \nonumber \\
    \val(b) &> f + d - (j + k) \\
    \noalign{\noindent and by (\ref{eq:2-e-max-f-min-minor-equality})}
    \label{eq:2-e-max-f-min-val-b}
    \val(b) &> 0.
  \end{align}
  In particular, $\val(b) > \val(c)$.  Since $\val(c) \le 0$ and $\val(a) \ge
  0$, the conditions of Section~\ref{sec:hexagons-u-s2-s1} are satisfied, and
  because $\val(b) > 0$ we see that the hexagon for $g$ has the vertices
  \begin{equation*}
   \heximages
       {\begin{pmatrix}
           0 & 1 & 0 \\
           0 & 0 & 1 \\
           1 & 0 & 0
       \end{pmatrix}}
       {\begin{pmatrix}
           0 & 1 & 0 \\
           0 & 0 & 1 \\
           1 & 0 & 0
       \end{pmatrix}}
       {\begin{pmatrix}
           0 & 1 & 0 \\
           \e^{f-k} & 0 & 0 \\
           0 & 0 & \e^{k-f}
       \end{pmatrix}}
       {\begin{pmatrix}
           0 & 1 & 0 \\
           \e^{f-k} & 0 & 0 \\
           0 & 0 & \e^{k-f}
       \end{pmatrix}}
       {\begin{pmatrix}
           0 & 1 & 0 \\
           0 & 0 & 1 \\
           1 & 0 & 0
       \end{pmatrix}}
       {\begin{pmatrix}
           0 & 1 & 0 \\
           \e^{f-k} & 0 & 0 \\
           0 & 0 & \e^{k-f}
       \end{pmatrix}}.
  \end{equation*}
  This hexagon is completely determined by our choice of $i,j,k,d,e,f$.  So all
  elements of $\X \cap U_1 s_2 s_1 I$ have the same corresponding hexagon, and
  Theorem~\ref{thm:Triviality-For-One-Intersection} applies.

  \subsubsection{$x = \e^{(d, e, f)}s_1 s_2 s_1$, with $d < f \le e$}

  Let
  \begin{equation}
    x = \e^{(d, e, f)}s_1 s_2 s_1 = \begin{pmatrix}
        0 & 0 & \e^d \\
        0 & \e^e & 0 \\
        \e^f & 0 & 0
      \end{pmatrix}
  \end{equation}
  where $d < f \le e$, with $d + e + f = 0$.  Let $\nu = (i,j,k)$ with $i > j >
  k$ and $i + j + k = 0$.  We will show that in this case the intersection $\X
  \cap U_1 wI$ is nonempty only when $w = s_1$, and that
  Theorem~\ref{thm:Triviality-For-One-Intersection} applies.

  Since $d < f \le e$, by
  Theorem~\ref{thm:DeterminingIwahoriDoubleCosetForElement} the entry in the
  first row and third column of $h$ must have valuation $d$ and the determinant
  of the $2 \times 2$ minor which excludes the second column and second row
  must be $d + f$.  That means that $w$ can only be one of $1$, $s_1$, and
  $s_2$, since for all other values of $w$ the top-right entry is 0.

  If $w = 1$, then
  \begin{equation*}
    h = \begin{pmatrix}
      \e^i & \alpha & \beta \\
      0 & \e^j & \gamma \\
      0 & 0 & \e^k
    \end{pmatrix}.
  \end{equation*}
  In this case, the condition on the minor is that $i + k = d + f$, which means
  $j = e$.  But by assumption, $f \le e$ and $i > j$, so
  \begin{align*}
    d + f &= i + k \\
          &> j + k \\
          &= e + k \\
          &\ge f + k \\
    \noalign{\noindent and therefore}
        d &> k
  \end{align*}
  But for $h$ to be in $IxI$, we must have $\val(\e^k) > d$, which means $d
  < k$.  These two conditions cannot both be satisfied, so $\X
  \cap U_1 I = \emptyset$ in this case.

  If $w = s_2$, then
  \begin{equation*}
    h = \begin{pmatrix}
      \e^i & \beta & \alpha \\
      0 & \e^k & 0 \\
      0 & \gamma & \e^j
    \end{pmatrix}.
  \end{equation*}
  In this case, the condition on the minor is that $i + j = d + f$, which means
  $k = e$.  But by assumption, $d < e$ and $f \le e$, so
  \begin{align*}
    d + f &< 2e \\
    i + j &< 2k.
  \end{align*}
  This condition cannot be satisfied, since $i > k$ and $j > k$.  Therefore $\X
  \cap U_1 s_2 I = \emptyset$ in this case.

  If $w = s_1$, then
  \begin{equation*}
    h = \begin{pmatrix}
      \e^j & 0 & \gamma \\
      \alpha & \e^i & \beta \\
      0 & 0 & \e^k
    \end{pmatrix}.
  \end{equation*}
  By Theorem~\ref{thm:DeterminingIwahoriDoubleCosetForElement}, the necessary
  conditions for $h$ to be in $IxI$ include
  \begin{align}
    \label{eq:2-e-max-d-min-val-gamma}
    \val(\gamma) &= d \\
    \label{eq:2-e-max-d-min-k-greater-d}
    \val(\e^k) &> d \implies k > d \\
    \label{eq:2-e-max-d-min-minor-equality}
    \val(\e^{j+k}) &= d + f \implies j + k = d + f \\
    \label{eq:2-e-max-d-min-val-minor}
    \val(\beta\e^j - \alpha\gamma) &\ge d + f\\
    \label{eq:2-e-max-d-min-val-alpha}
    \val(\e^k\alpha) &> d + f.
  \end{align}
  We will use these to determine the valuations of $a$, $b$, $c$, and $b - ac$.

  First, we note that (\ref{eq:2-e-max-d-min-minor-equality}) implies
  that
  \begin{equation}
    \label{eq:2-e-max-d-min-e-equals-i}
    e = i.
  \end{equation}
  
  From (\ref{eq:2-e-max-d-min-val-gamma}), (\ref{eq:gamma}), and
  (\ref{eq:2-e-max-d-min-k-greater-d})  we see
  that \begin{equation}
    \label{eq:2-e-max-d-min-val-c}
    \val(c) = d - k < 0.
  \end{equation}

  From (\ref{eq:2-e-max-d-min-val-alpha}), (\ref{eq:alpha}), and
  (\ref{eq:2-e-max-d-min-minor-equality}) we see that
  \begin{align}
    \val(a) + j + k &> d + f \nonumber \\
    \label{eq:2-e-max-d-min-val-a}
    \val(a) &> 0
  \end{align}

  Now we see from (\ref{eq:2-e-max-d-min-val-a}),
  (\ref{eq:2-e-max-d-min-val-gamma}), and
  (\ref{eq:2-e-max-d-min-e-equals-i}) that
  \begin{align}
    \val(\e^i\s(a)\gamma) &> i + d \nonumber \\
    &= d + e \nonumber \\
    \label{eq:2-e-max-d-min-trailing-term}
    &\ge d + f.
  \end{align}
  But from (\ref{eq:2-e-max-d-min-val-minor}) and (\ref{eq:minor}) we know that
  \begin{equation*}
    \val(\e^{i+j}\s(b) - \e^{j+k}b - \e^i\s(a)\gamma) \ge f + d.
  \end{equation*}
  Since $i > k$, this, combined with
  (\ref{eq:2-e-max-d-min-trailing-term}), implies that
  \begin{align}
    \val(\e^{j+k}b) &\ge f + d \nonumber \\
    \val(b) &\ge f + d - (j + k) \\
    \noalign{\noindent and by (\ref{eq:2-e-max-d-min-minor-equality})}
    \label{eq:2-e-max-d-min-val-b}
    \val(b) &\ge 0.
  \end{align}
  Since $\val(c) < 0$ and $\val(a) > 0$, the conditions of
  Section~\ref{sec:hexagons-u-s1} are satisfied.  Note that $\val(b) > \val(c)$
  and $\val(ac) > \val(c)$, so $\val(b-ac) > \val(c)$.  Therefore we see that
  the hexagon for $g$ has the vertices
  \begin{equation*}
   \heximages
       {\begin{pmatrix}
           0 & 1 & 0 \\
           1 & 0 & 0 \\
           0 & 0 & 1
       \end{pmatrix}}
       {\begin{pmatrix}
           0 & 1 & 0 \\
           1 & 0 & 0 \\
           0 & 0 & 1
       \end{pmatrix}}
       {\begin{pmatrix}
           0 & 1 & 0 \\
           0 & 0 & \e^{d-k} \\
           \e^{k-d} & 0 & 0
       \end{pmatrix}}
       {\begin{pmatrix}
           0 & 1 & 0 \\
           0 & 0 & \e^{d-k} \\
           \e^{k-d} & 0 & 0
       \end{pmatrix}}
       {\begin{pmatrix}
           0 & 1 & 0 \\
           1 & 0 & 0 \\
           0 & 0 & 1
       \end{pmatrix}}
       {\begin{pmatrix}
           0 & 1 & 0 \\
           0 & 0 & \e^{d-k} \\
           \e^{k-d} & 0 & 0
       \end{pmatrix}}.
  \end{equation*}
  This hexagon is completely determined by our choice of $i,j,k,d,e,f$.  So all
  elements of $\X \cap U_1 s_1 I$ have the same corresponding hexagon, and
  Theorem~\ref{thm:Triviality-For-One-Intersection} applies.

  \subsubsection{$x = \e^{(d,e,f)} s_1s_2s_1$, with $d < e < f$ and $e \ge j$}
  \label{sec:first-long-two-cycle}
  Let
  \begin{equation}
    x = \e^{(d, e, f)}s_1s_2s_1 = \begin{pmatrix}
        0 & 0 & \e^d  \\
        0 & \e^e & 0 \\
         \e^f & 0 & 0
      \end{pmatrix}
  \end{equation}
  where $d < e < f$ and $d + e + f = 0$.  Let $\nu = (i,j,k)$ with $i > j > k$
  and $i + j + k = 0$.  Assume that $e \ge j$.  We will show that in this case
  the intersection $\X \cap U_1wI$ is nonempty only when $w = s_1$ or $w = 1$,
  and that if $e \neq i$ only the $w = 1$ intersection is nonempty.  Then we
  will show that if $e \neq i$
  Theorem~\ref{thm:Triviality-For-One-Intersection} applies and otherwise
  Theorem~\ref{thm:Triviality-For-Nice-Stratification} applies.
  
  Since $d < e < f$, by
  Theorem~\ref{thm:DeterminingIwahoriDoubleCosetForElement} the entry in the
  first row and third column of $h$ must have valuation $d$ and the valuation
  of the determinant of the top-right $2 \times 2$ minor must be $d + e$.  That
  means that $w$ can only be one of $1$, $s_1$, and $s_2$, since for all other
  values of $w$ the top-right entry is 0.

  If $w = s_2$, then
  \begin{equation*}
    h = \begin{pmatrix}
      \e^i & \beta & \alpha \\
      0 & \e^k & 0 \\
      0 & \gamma & \e^j
    \end{pmatrix}.
  \end{equation*}
  For $h$ to be in $IxI$, we must must have $\val(\alpha) = d$ and
  $\val(\e^k\alpha) = d + e$.  This means that $e = k$.  But by assumption, $e
  \ge j > k$, so in this case $\X \cap U_1 s_2 I = \emptyset$.

  If $w = s_1$, then
  \begin{equation*}
    h = \begin{pmatrix}
      \e^j & 0 & \gamma \\
      \alpha & \e^i & \beta \\
      0 & 0 & \e^k
    \end{pmatrix}.
  \end{equation*}
  For $h$ to be in $IxI$, we must have $\val(\gamma) = d$ and $\val(\e^i\gamma)
  = d + e$.  This means that $e = i$.  In all other cases, $\X \cap U_1 s_1 I =
  \emptyset$.

  So for $e \ne i$ we only have nonempty intersections with $U_1 wI$
  for $w = 1$.  In this case, \begin{equation*}
    h = \begin{pmatrix}
      \e^i & \alpha & \beta \\
      0 & \e^j & \gamma \\
      0 & 0 & \e^k
    \end{pmatrix}.
  \end{equation*}
  By Theorem~\ref{thm:DeterminingIwahoriDoubleCosetForElement}, the necessary
  conditions for $h$ to be in $IxI$ include
  \begin{align}
    \label{eq:2-f-max-d-min-id-val-beta}
    \val(\beta) &= d \\
    \label{eq:2-f-max-d-min-id-val-gamma}
    \val(\gamma) &> d \\
    \label{eq:2-f-max-d-min-id-k-greater-d}
    \val(\e^k) &> d \implies k > d \\
    \label{eq:2-f-max-d-min-id-i-less-f}
    \val(\e^{j+k}) &> d + e \implies j + k > d + e \implies i < f \\
    \label{eq:2-f-max-d-min-id-val-minor}
    \val(\beta\e^j - \alpha\gamma) &= d + e \\
    \label{eq:2-f-max-d-min-id-val-gamma2}
    \val(\e^i \gamma) &> d + e \\
    \label{eq:2-f-max-d-min-id-val-alpha}
    \val(\e^k\alpha) &> d + e.
  \end{align}
  We will use these to determine the valuations of $a$, $b$, $c$, and $b-ac$.

  First, note that by (\ref{eq:2-f-max-d-min-id-val-alpha}) and (\ref{eq:alpha})
  \begin{equation}
    \label{eq:2-f-max-d-min-id-val-a}
    \val(a) > d + e -  j - k.
  \end{equation}

  By (\ref{eq:2-f-max-d-min-id-val-gamma}), (\ref{eq:2-f-max-d-min-id-val-gamma2}),
  and (\ref{eq:gamma}),
  \begin{equation}
    \label{eq:2-f-max-d-min-id-val-c}
    \val(c) > \max(d-k, (d-k) + (e -i)).
  \end{equation}

  \begin{lemma}
    \label{lemma:2-f-max-d-min-id-val-ac}
    $\val(ac) \ge d - k$.
  \end{lemma}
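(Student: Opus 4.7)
The plan is to translate the desired bound into a statement about $\val(\alpha\gamma)$ and then extract that bound directly from the minor condition (\ref{eq:2-f-max-d-min-id-val-minor}) by using the section hypothesis $e \ge j$.

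First I would observe that since $i > j$, the defining equation $\alpha = \e^i \s(a) - \e^j a$ has the term $\e^j a$ of strictly lower valuation than $\e^i \s(a)$ (assuming $a \neq 0$), giving $\val(\alpha) = j + \val(a)$. Similarly, since $j > k$, equation (\ref{eq:gamma}) yields $\val(\gamma) = k + \val(c)$. Multiplying, $\val(\alpha\gamma) = j + k + \val(ac)$, so the lemma $\val(ac) \ge d - k$ is equivalent to $\val(\alpha\gamma) \ge d + j$.

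To obtain this, I would combine the minor condition $\val(\beta\e^j - \alpha\gamma) = d + e$ with $\val(\beta) = d$ from (\ref{eq:2-f-max-d-min-id-val-beta}), which gives $\val(\beta\e^j) = d + j$. Since $e \ge j$ by the standing hypothesis, $d + e \ge d + j$. If one had $\val(\alpha\gamma) < d + j$, then $\val(\alpha\gamma) < \val(\beta\e^j)$ would force $\val(\beta\e^j - \alpha\gamma) = \val(\alpha\gamma) < d + j \le d + e$, contradicting the minor condition. Hence $\val(\alpha\gamma) \ge d + j$, which is exactly the claim.

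The only routine point to watch is the identification $\val(\alpha) = j + \val(a)$ and $\val(\gamma) = k + \val(c)$; if $a$ or $c$ vanishes the lemma holds trivially with the convention $\val(0) = +\infty$. I do not anticipate any real obstacle here: the content of the argument is simply that the minor condition plus the hypothesis $e \ge j$ forces the valuation of $\alpha\gamma$ to be at least that of $\beta\e^j$, and everything else is a bookkeeping identity.
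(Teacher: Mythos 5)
Your proof is correct, and it takes a genuinely more direct route than the paper's. The paper argues by contradiction, passes to the expanded form of the minor expression via (\ref{eq:minor}), and first uses the $\beta$ condition together with (\ref{eq:beta}) to pin down $\val(\e^k b) = \val(a\gamma)$ before deriving an impossible equality $i = j$. You instead observe that the strict inequalities $i > j > k$ force $\val(\alpha) = j + \val(a)$ and $\val(\gamma) = k + \val(c)$ with no cancellation possible in (\ref{eq:alpha}) and (\ref{eq:gamma}), so $\val(\alpha\gamma) = j + k + \val(ac)$, and the claim is equivalent to $\val(\alpha\gamma) \ge d + j$. That follows at once from the ultrametric inequality: $\val(\beta\e^j) = d + j \le d + e = \val(\beta\e^j - \alpha\gamma)$ by (\ref{eq:2-f-max-d-min-id-val-beta}), (\ref{eq:2-f-max-d-min-id-val-minor}), and $e \ge j$, which rules out $\val(\alpha\gamma) < d + j$. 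The advantage of your route is that it never touches $b$ at all and avoids the detour through (\ref{eq:minor}); you have also correctly noted the degenerate cases $a = 0$ or $c = 0$ are trivial under $\val(0) = +\infty$. The paper's longer chain is presumably an artifact of working uniformly with the $b$-centered identities used elsewhere in the section, but for this particular lemma your shortcut is cleaner.
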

  \begin{proof}
    Assume $\val(ac) < d - k$.  Then by (\ref{eq:gamma}), $\val(a\gamma) < d$.
    Now by (\ref{eq:2-f-max-d-min-id-val-beta}) and (\ref{eq:beta}) and because
    $i > k$, we must have
    \begin{equation*}
      \val(\e^k b) = \val(a\gamma) < d.
    \end{equation*}
    This would mean, because $i > j$ and $j \le e$, that
    \begin{align*}
      \val(\e^{i+j}\s(b) - \e^{j+k}b) &= j + \val(a\gamma) \\
      &< e + d.
    \end{align*}
    But then, by (\ref{eq:minor}), to satisfy
    (\ref{eq:2-f-max-d-min-id-val-minor}) we must have
    \begin{align*}
      \val(\e^i\s(a)\gamma) &= \val(\e^{i+j}\s(b) - \e^{j+k}b) \\
      & = j + \val(a\gamma).
    \end{align*}
    This requires $i = j$, which is impossible.  So $\val(ac) \ge d-k$.
  \end{proof}

  \begin{lemma}
    \label{lemma:2-f-max-d-min-id-e-g-j-val-ac}
    If $e > j$, then $\val(ac) = d - k$.
  \end{lemma}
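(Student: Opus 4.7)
The plan is to establish $\val(ac) \le d - k$ by contradiction and combine it with Lemma~\ref{lemma:2-f-max-d-min-id-val-ac} to conclude equality. Suppose for contradiction that $\val(ac) > d - k$. Expanding $a\gamma = \e^j a\s(c) - \e^k ac$ and using $j > k$, the $\e^k ac$ contribution dominates, so $\val(a\gamma) = k + \val(ac) > d$. Plugging into $\beta = \e^i\s(b) - \e^k b - a\gamma$ and using $\val(\beta) = d$ from (\ref{eq:2-f-max-d-min-id-val-beta}), the $a\gamma$ term is dominated, which forces $\val(\e^i\s(b) - \e^k b) = d$. Since $i > k$, the $\e^k b$ term then dictates the valuation, pinning $\val(b)$ down to $d - k$.

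Next I would estimate the three terms appearing on the right-hand side of the identity (\ref{eq:minor}). The middle term satisfies $\val(\e^{j+k}b) = j + d$. The first term satisfies $\val(\e^{i+j}\s(b)) = (i+j) + (d-k) = (j+d) + (i-k)$, which strictly exceeds $j+d$ because $i > k$. The third term satisfies $\val(\e^i\s(a)\gamma) = i + \val(a\gamma) > i + d$, which strictly exceeds $j+d$ because $i > j$. Hence the expression $\e^{i+j}\s(b) - \e^{j+k}b - \e^i\s(a)\gamma$ has valuation exactly $j + d$, with the middle term providing the dominant contribution. However, this expression equals $\beta\e^j - \alpha\gamma$, whose valuation is $d + e$ by (\ref{eq:2-f-max-d-min-id-val-minor}). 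Therefore $j = e$, contradicting the hypothesis $e > j$.

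The only real subtlety is justifying each ``dominant term'' claim, but in every comparison the competing contributions either carry distinct powers of $\e$ multiplied against elements whose valuations we have already determined, or are separated by a strict inequality of valuations, so no leading-coefficient cancellation can spoil the estimates. Once $\val(b)$ is pinned down from the $\beta$ equation, the argument is a short bookkeeping exercise using the linear relations $i > j > k$, $i + j + k = 0$ among the coweight coordinates. The main conceptual point, and essentially the only obstacle, is recognizing that the assumption $\val(ac) > d - k$ is precisely what causes the $\beta$ equation to determine $\val(b)$ sharply, which in turn rigidifies the minor identity enough to force the contradiction $j = e$.
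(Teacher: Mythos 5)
Your proof is correct and follows essentially the same path as the paper's: assume $\val(ac) > d - k$, use $\val(\beta) = d$ together with $\gamma = \e^j\s(c) - \e^k c$ and $j > k$ to pin $\val(\e^k b) = d$, then show that the $\e^{j+k}b$ term in (\ref{eq:minor}) strictly dominates, giving $\val(\beta\e^j - \alpha\gamma) = d + j$, which contradicts (\ref{eq:2-f-max-d-min-id-val-minor}) since $j < e$. The only cosmetic difference is that you extract $\val(b) = d - k$ explicitly before plugging into the minor identity, whereas the paper works directly with $\val(\e^k b)$; the logical content is identical.
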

  \begin{proof}
    Assume $\val(ac) > d - k$.  Then by (\ref{eq:2-f-max-d-min-id-val-beta}) and
    because $i > k$, we must have $\val(\e^k b) = d$.  In that case
    \begin{align*}
      \val(\e^{i+j}\s(b) - \e^{k+j}b) &= \val(\e^{k+j}b) \\
      &= d + j.
    \end{align*}
    At the same time, because $j > k$,
    \begin{align*}
      \val(\e^i \s(a) \gamma)) &= i + k + \val(ac) \\
      &> i + k + d - k \\
      &= d + i \\
      &> d + j.
    \end{align*}
    Since the difference of these two terms if $\beta\e^j - \alpha\gamma$, we
    have $\val(\beta\e^j - \alpha\gamma) = d + j < d + e$, contradicting
    (\ref{eq:2-f-max-d-min-id-val-minor}).  Therefore we must have $\val(ac)
    \le d - k$.  Since we already know $\val(ac) \ge d-k$ by
    Lemma~\ref{lemma:2-f-max-d-min-id-val-ac}, we conclude that $\val(ac) = d -
    k$.
  \end{proof}

  \begin{lemma}
    \label{lemma:2-f-max-d-min-id-e-g-j-val-a}
    If $e > j$ then $0 > \val(a)$.
  \end{lemma}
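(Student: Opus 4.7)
The plan is to combine the two bounds on $a$ and $c$ that have already been extracted from the matrix entries in the current case. By Lemma~\ref{lemma:2-f-max-d-min-id-e-g-j-val-ac}, the hypothesis $e > j$ forces the equality $\val(ac) = d - k$, so $\val(a) = (d - k) - \val(c)$ and the problem reduces to showing $\val(c) > d - k$.

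The bound $\val(c) > d - k$ is exactly the first half of the inequality (\ref{eq:2-f-max-d-min-id-val-c}), which was obtained from the requirement $\val(\gamma) > d$ in (\ref{eq:2-f-max-d-min-id-val-gamma}) combined with the formula $\gamma = \e^j\s(c) - \e^k c$ in (\ref{eq:gamma}). So my proof consists of just quoting these two results and subtracting valuations: since $\val(ac) = d - k$ by the previous lemma and $\val(c) > d - k$ by (\ref{eq:2-f-max-d-min-id-val-c}), we conclude
\begin{equation*}
\val(a) = \val(ac) - \val(c) < (d - k) - (d - k) = 0.
\end{equation*}

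There is no real obstacle here; the substance of the argument already lives in Lemma~\ref{lemma:2-f-max-d-min-id-e-g-j-val-ac}, whose proof did the work of comparing the various terms in the expansion (\ref{eq:minor}) to pin down $\val(ac)$ precisely. The present statement is a one-line corollary that will be used in the subsequent analysis of the hexagon corresponding to $g$ when $e = i$, where the sign of $\val(a)$ controls which formula from Section~\ref{sec:hexagons-u} describes the $s_1 U_1 s_1$-orbit vertex.
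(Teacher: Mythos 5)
Your proof is correct and follows exactly the same reasoning as the paper: cite Lemma~\ref{lemma:2-f-max-d-min-id-e-g-j-val-ac} for $\val(ac) = d - k$, cite (\ref{eq:2-f-max-d-min-id-val-c}) for $\val(c) > d - k$, and subtract to conclude $\val(a) < 0$.
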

  \begin{proof}
    By Lemma~\ref{lemma:2-f-max-d-min-id-e-g-j-val-ac}, $\val(ac) = d - k$.  But
    by (\ref{eq:2-f-max-d-min-id-val-c}), $\val(c) > d - k$.  Therefore $\val(a)
    < 0$.
  \end{proof}
  \begin{lemma}
    \label{lemma:2-f-max-d-min-id-e-g-j-val-c}
    If $e > j$ then $0 > \val(c)$.
  \end{lemma}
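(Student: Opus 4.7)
The plan is to combine the exact value $\val(ac) = d - k$ established in Lemma~\ref{lemma:2-f-max-d-min-id-e-g-j-val-ac} with the lower bound on $\val(a)$ coming from (\ref{eq:2-f-max-d-min-id-val-a}), and simply subtract. Since $\val(ac) = \val(a) + \val(c)$, showing $\val(c) < 0$ is equivalent to showing $\val(a) > d - k$.

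The key step is to use inequality (\ref{eq:2-f-max-d-min-id-val-a}), which gives $\val(a) > d + e - j - k$. Rewriting this as $\val(a) > (d-k) + (e-j)$, the hypothesis $e > j$ of the lemma makes the right-hand side strictly greater than $d-k$, so $\val(a) > d - k$ follows immediately.

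Substituting into $\val(c) = \val(ac) - \val(a) = (d-k) - \val(a)$ then gives $\val(c) < 0$, as desired.

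I don't anticipate any real obstacle here: the lemma is a one-line consequence of the previously established exact valuation of $ac$ and the already-proved lower bound on $\val(a)$, using only the arithmetic fact that $e > j$ promotes the generic inequality $\val(a) > d + e - j - k$ into the stronger $\val(a) > d - k$. No new appeal to the defining equations (\ref{eq:alpha})--(\ref{eq:minor}) or to Theorem~\ref{thm:DeterminingIwahoriDoubleCosetForElement} is needed beyond what went into the two preceding lemmas.
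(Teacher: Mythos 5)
Your proof is correct and uses exactly the same ingredients as the paper: Lemma~\ref{lemma:2-f-max-d-min-id-e-g-j-val-ac} to get $\val(ac) = d-k$ and the bound (\ref{eq:2-f-max-d-min-id-val-a}) on $\val(a)$, combined with $e > j$. The paper records the slightly sharper conclusion $\val(c) < j - e$, but this is only a cosmetic difference.
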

  \begin{proof}
    By Lemma~\ref{lemma:2-f-max-d-min-id-e-g-j-val-ac}, $\val(ac) = d - k$.  But
    by (\ref{eq:2-f-max-d-min-id-val-a}), $\val(a) > (d - k) + (e-j)$.  Therefore
    $\val(c) < j - e < 0$.
  \end{proof}

  Now we have three possibilities: $e > i$, $e < i$, and $e = i$.

  \subsubsubsection{The case $e > i$}

  \begin{lemma}
    \label{lemma:2-f-max-d-min-id-e-g-i-val-b}
    If $e > i$, then $\val(b) = d + 2i$.  This means that $\val(b) < 0$,
    $\val(b) < \val(a)$, and $\val(b-ac) = d-k < \val(c) < 0$.
  \end{lemma}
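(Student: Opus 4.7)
The plan is to chain together the previous lemmas in this subsection to pin down $\val(b)$ exactly, then read off the three consequences by comparing valuations.

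First, observe that $e > i$ forces $e > j$ (since $j < i$), so Lemmas~\ref{lemma:2-f-max-d-min-id-e-g-j-val-ac}, \ref{lemma:2-f-max-d-min-id-e-g-j-val-a}, and \ref{lemma:2-f-max-d-min-id-e-g-j-val-c} all apply. In particular, $\val(ac) = d - k$, $\val(a) < 0$, and $\val(c) < 0$. Using (\ref{eq:gamma}) together with $j > k$, the two terms $\e^j\s(c)$ and $\e^k c$ have different valuations, so there is no cancellation and $\val(\gamma) = k + \val(c)$. Combining these gives the crucial estimate $\val(\e^i\s(a)\gamma) = i + \val(a) + k + \val(c) = i + k + \val(ac) = d + i$.

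Now I apply (\ref{eq:minor}), which rearranges to $\e^{i+j}\s(b) - \e^{j+k}b = (\beta\e^j - \alpha\gamma) + \e^i\s(a)\gamma$. By (\ref{eq:2-f-max-d-min-id-val-minor}) the first term on the right has valuation $d + e$, while the second has valuation $d + i$ by the computation above. Since $e > i$ in this case, these valuations differ and $\val(\e^{i+j}\s(b) - \e^{j+k}b) = d + i$. Because $i > k$, the term $\e^{i+j}\s(b)$ has strictly larger valuation than $\e^{j+k}b$, so $\val(\e^{j+k}b) = d + i$, giving $\val(b) = d + i - j - k = d + 2i$ using $i + j + k = 0$.

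Next I verify the three consequences. For $\val(b) < 0$: writing $d = -e - f$, we have $d + 2i = 2i - e - f < 0$ since $i < e$ and $i < f$ (the latter from (\ref{eq:2-f-max-d-min-id-i-less-f})). For $\val(b) < \val(a)$: by (\ref{eq:2-f-max-d-min-id-val-a}) we have $\val(a) > d + e - j - k = d + e + i$, and $d + 2i < d + e + i$ precisely because $i < e$. For $\val(b - ac) = d - k$: since $\val(b) = d + 2i$ and $\val(ac) = d - k$, comparing gives $\val(b) - \val(ac) = 2i + k = i - j > 0$, so $\val(b) > \val(ac)$ and hence $\val(b - ac) = \val(ac) = d - k$. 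Finally, $d - k < 0$ follows from (\ref{eq:2-f-max-d-min-id-k-greater-d}), and $d - k < \val(c)$ is exactly (\ref{eq:2-f-max-d-min-id-val-c}).

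The main obstacle is simply the bookkeeping in step one: confirming that no unexpected cancellation occurs in $\gamma$ so that $\val(a\gamma) = k + \val(ac) = d$ holds on the nose, since that precise equality is what allows the $e > i$ hypothesis to trigger the dominance of $\e^i\s(a)\gamma$ over $\beta\e^j - \alpha\gamma$ and thereby determine $\val(b)$ exactly rather than merely as a lower bound.
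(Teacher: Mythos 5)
Your proof is correct and follows essentially the same route as the paper's: use Lemma~\ref{lemma:2-f-max-d-min-id-e-g-j-val-ac} to get $\val(ac) = d-k$, compute $\val(\e^i\s(a)\gamma) = d+i$, observe that since $e > i$ this term strictly dominates the minor valuation $d+e$ from (\ref{eq:2-f-max-d-min-id-val-minor}), extract $\val(b) = d + 2i$ from the $\e^{j+k}b$ term, and then read off the consequences by comparing valuations. The only cosmetic difference is that you establish $\val(b) < 0$ directly from $d + 2i = 2i - e - f < 0$ using $i < e$ and $i < f$, whereas the paper deduces it from the chain $\val(b) < \val(a) < 0$; both are valid and equally short.
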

  \begin{proof}
    Since $e > i > j$, we know that $\val(ac) = d - k$ by
    Lemma~\ref{lemma:2-f-max-d-min-id-e-g-j-val-ac}.  So
    \begin{align*}
      \val(\e^i\s(a)\gamma) &= i + d - k + k \\
      &= d + i \\
      &< d + e.
    \end{align*}
    But then, by (\ref{eq:minor}), to satisfy
    (\ref{eq:2-f-max-d-min-id-val-minor}) we must have
    \begin{align*}
      \val(\e^{i+j}\s(b) - \e^{k+j}b) &= d + i \\
      \val(\e^{k+j}b) &= d + i\\
      \val(b) &= d + i - j - k \\
              &= d + 2i.
    \end{align*}
    Now \begin{align*}
      \val(b) &= d + i - j - k \\
              &< d + e - j - k.
    \end{align*}
    By (\ref{eq:2-f-max-d-min-id-val-a}), $d + e - j - k < \val(a)$, so
    $\val(b) < \val(a)$.  Since $\val(a) < 0$ by
    Lemma~\ref{lemma:2-f-max-d-min-id-e-g-j-val-a}, we see that $\val(b) < 0$.

    Since $\val(b) = d - k + (i - j) > d-k$ and by
    Lemma~\ref{lemma:2-f-max-d-min-id-e-g-j-val-ac} $\val(ac) = d - k$, we see
    that $\val(b-ac) = d-k$.  By (\ref{eq:2-f-max-d-min-id-val-c}), $d-k <
    \val(c)$, and by Lemma~\ref{lemma:2-f-max-d-min-id-e-g-j-val-c} $\val(c) <
    0$.
  \end{proof}

  Since $\val(b) < 0$ and $\val(b-ac) < \val(c) < 0$, the conditions of
  Section~\ref{sec:hexagons-u} are satisfied.  By
  Lemma~\ref{lemma:2-f-max-d-min-id-e-g-j-val-a}, $\val(a) < 0$.  This means that
  the hexagon for $g$ has the vertices
  \begin{equation*}
   \heximagessqueezedcarefully{1.8em}{-0.8em}
       {\begin{pmatrix}
           1 & 0 & 0 \\ 
           0 & 1 & 0 \\
           0 & 0 & 1
       \end{pmatrix}}
       {\begin{pmatrix}
           0 & \e^{\val(a)} & 0 \\
           \e^{-\val(a)} & 0 & 0 \\
           0 & 0 & 1
       \end{pmatrix}}
       {\begin{pmatrix}
           1 & 0 & 0 \\
           0 & 0 & \e^{\val(c)} \\
           0 & \e^{-\val(c)}  & 0
       \end{pmatrix}}
       {\begin{pmatrix}
           0 & \e^{d-k - \val(c)} & 0 \\
           0 & 0 & \e^{\val(c)} \\
           \e^{k-d} & 0 & 0
       \end{pmatrix}}
       {\begin{pmatrix}
           0 & 0 & \e^{d+2i} \\
           \e^{-\val(a)} & 0 & 0 \\
           0 & \e^{\val(a) - d - 2i} & 0  
       \end{pmatrix}}
       {\begin{pmatrix}
           0 & 0 & \e^{d+2i} \\
           0 & \e^{-k-2i} & 0 \\
           \e^{k-d} & 0 & 0
       \end{pmatrix}}
  \end{equation*}
  when $e > i$.  Note that all of these hexagons share two opposite vertices:
  the top and bottom one.  Therefore
  Theorem~\ref{thm:Triviality-For-One-Intersection} applies.

  \subsubsubsection{The case $e < i$}

  \begin{lemma}
    \label{lemma:2-f-max-d-min-id-e-l-i-val-b}
    If $e < i$, then $\val(b) = d + e - j - k = i - f$.  This means that
    $\val(b) < 0$ and $\val(b) < \val(a)$.
  \end{lemma}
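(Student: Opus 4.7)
The plan is to use equation~(\ref{eq:2-f-max-d-min-id-val-minor}) together with the identity~(\ref{eq:minor}) to pin down $\val(b)$, mirroring the structure of the proof of Lemma~\ref{lemma:2-f-max-d-min-id-e-g-i-val-b} but exploiting the opposite inequality $e < i$. The key step is to show that the summand $\e^i\s(a)\gamma$ in (\ref{eq:minor}) has valuation strictly greater than $d+e$, so that it drops out of the valuation computation and the only surviving terms involve $b$ and $\s(b)$.

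First I would establish a lower bound on $\val(a\gamma)$. Writing $a\gamma = \e^j a \s(c) - \e^k ac$, Lemma~\ref{lemma:2-f-max-d-min-id-val-ac} gives $\val(\e^k ac) \ge d$, while the other summand has valuation $\val(\e^j a \s(c)) = j + \val(ac) \ge j + d - k > d$ since $j > k$. Hence $\val(a\gamma) \ge d$, and so $\val(\e^i \s(a)\gamma) \ge i + d$. Under the case hypothesis $e < i$ this strictly exceeds $d + e$.

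Combining this with (\ref{eq:minor}) and (\ref{eq:2-f-max-d-min-id-val-minor}) forces $\val(\e^{i+j}\s(b) - \e^{j+k}b) = d + e$. Since $i > k$, the two summands have different valuations and the $\e^{j+k}b$ term dominates, so $\val(b) = d + e - j - k$. Using the identities $j + k = -i$ and $d + e + f = 0$, this rewrites as $i - f$, which is negative by (\ref{eq:2-f-max-d-min-id-i-less-f}). Finally, the strict bound $\val(a) > d + e - j - k$ from (\ref{eq:2-f-max-d-min-id-val-a}) gives $\val(b) < \val(a)$.

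I do not anticipate a major obstacle; the only subtle point is the lower bound on $\val(a\gamma)$, which relies on the fact that the $\e^j a\s(c)$ term is strictly subdominant to $\e^k ac$ once Lemma~\ref{lemma:2-f-max-d-min-id-val-ac} is applied, by virtue of $j > k$.
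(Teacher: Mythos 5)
Your proof is correct and follows essentially the same route as the paper's: invoke Lemma~\ref{lemma:2-f-max-d-min-id-val-ac} to bound $\val(ac)$ from below, deduce that $\val(\e^i\s(a)\gamma) > d + e$ when $e < i$, and then use (\ref{eq:minor}) together with (\ref{eq:2-f-max-d-min-id-val-minor}) and the dominance of $\e^{j+k}b$ over $\e^{i+j}\s(b)$ to pin down $\val(b)$. The only difference is that you spell out the intermediate step $\val(a\gamma) \ge d$ explicitly, which the paper leaves implicit in the phrase ``by (\ref{eq:gamma})''; the remainder, including the use of (\ref{eq:2-f-max-d-min-id-i-less-f}) and (\ref{eq:2-f-max-d-min-id-val-a}) for the two final inequalities, matches the paper exactly.
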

  \begin{proof}
    By Lemma~\ref{lemma:2-f-max-d-min-id-val-ac}, $\val(ac) \ge d -k$.
    Therefore, by (\ref{eq:gamma}),
    \begin{align*}
      \val(\e^i\s(a)\gamma) &\ge d + i \\
       &> d + e.
    \end{align*}
    Since $i > k$ and (\ref{eq:minor}) holds, to satisfy
    (\ref{eq:2-f-max-d-min-id-val-minor}) we must have
    \begin{align*}
      \val(\e^{j+k} b) &= d + e \\
      \val(b) &= d + e - j - k \\
      &= i - f.
    \end{align*}
    By (\ref{eq:2-f-max-d-min-id-i-less-f}), $\val(b) < 0$.  By
    (\ref{eq:2-f-max-d-min-id-val-a}), $\val(b) < \val(a)$.
  \end{proof}

  \begin{lemma}
    If $e < i$, then $\val(b-ac) = d -k$.  This means that $\val(b-ac) < 0$ and
    $\val(b-ac) < \val(c)$.
  \end{lemma}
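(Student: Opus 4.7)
The plan is to extract $\val(b-ac)$ from the relation $\val(\beta) = d$ by regrouping (\ref{eq:beta}) as
\begin{equation*}
\beta \;=\; \bigl(\e^i\s(b) - a\e^j\s(c)\bigr) - \e^k(b - ac),
\end{equation*}
and showing that each term inside the parentheses has valuation strictly greater than $d$. Once that is done, the only way $\beta$ can have valuation exactly $d$ is for $\val(\e^k(b-ac)) = d$, giving the desired identity $\val(b-ac) = d - k$.

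First I would estimate $\val(\e^i\s(b))$. By the previous lemma (the ``$e < i$'' case, which is precisely the case we are in), $\val(b) = d + e - j - k$. Hence $\val(\e^k b) = d + (e - j) \ge d$ since $e \ge j$ by the standing hypothesis, and the difference $\val(\e^i\s(b)) - \val(\e^k b) = i - k$ is positive because $i > k$. So $\val(\e^i\s(b)) > d$. Next, for $a\e^j\s(c)$ I invoke Lemma~\ref{lemma:2-f-max-d-min-id-val-ac}, which gives $\val(a\e^k c) \ge d$; comparing with $a\e^j\s(c)$, the only change is the exponent on $\e$ going from $k$ to $j$, so $\val(a\e^j\s(c)) = \val(a\e^k c) + (j - k) > d$. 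Both ``upper'' terms thus having valuation $>d$, the relation $\val(\beta) = d$ forces $\val(\e^k(b-ac)) = d$, i.e., $\val(b-ac) = d-k$.

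The two consequences are then immediate: (\ref{eq:2-f-max-d-min-id-k-greater-d}) gives $k > d$, so $d - k < 0$; and in the case $e < i$ the bound (\ref{eq:2-f-max-d-min-id-val-c}) simplifies to $\val(c) > d - k$, whence $\val(b-ac) < \val(c)$. I do not anticipate a serious obstacle beyond careful bookkeeping with the identities $i + j + k = 0$ and $d + e + f = 0$; the whole argument is a single valuation-comparison once the regrouping of (\ref{eq:beta}) is in place, and the two prior lemmas in this block supply everything else.
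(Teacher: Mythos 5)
Your proof is correct and follows essentially the same route as the paper: both regroup $\beta$ so that $\e^k(b-ac)$ is isolated, bound $\val(\e^i\s(b))$ and $\val(a\e^j\s(c))$ strictly above $d$ using Lemma~\ref{lemma:2-f-max-d-min-id-e-l-i-val-b} and Lemma~\ref{lemma:2-f-max-d-min-id-val-ac} respectively, and then force $\val(\e^k(b-ac)) = d$ from $\val(\beta) = d$. The derivation of the two consequences from (\ref{eq:2-f-max-d-min-id-k-greater-d}) and (\ref{eq:2-f-max-d-min-id-val-c}) matches the paper's as well.
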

  \begin{proof}
    By Lemma~\ref{lemma:2-f-max-d-min-id-e-l-i-val-b}, $\val(b) = d + e - j - k$.
    Hence
    \begin{align*}
      \val(\e^i \s(b)) &= d + e - j - k + i \\
      &> d + e - j \\
      &\ge d.
    \end{align*}
    By Lemma~\ref{lemma:2-f-max-d-min-id-val-ac}, $\val(ac) \ge d-k$.  So
    \begin{align*}
      \val(\e^j a \s(c)) &\ge d - k + j \\
      &> d.
    \end{align*}
    For (\ref{eq:2-f-max-d-min-id-val-beta}) to be satisfied, we must have
    \begin{align*}
      \val(-\e^k b + \e^k ac) &= d\\
      \val(b - ac) &= d-k.
    \end{align*}
    By (\ref{eq:2-f-max-d-min-id-k-greater-d}), $\val(b-ac) < 0$.  By
    (\ref{eq:2-f-max-d-min-id-val-c}), $\val(b-ac) < \val(c)$.
  \end{proof}

  Since $\val(b) < 0$ and $\val(b-ac) < 0$, the conditions of
  Section~\ref{sec:hexagons-u} are satisfied.  We have $\val(b-ac) < \val(c)$
  and $\val(b) < \val(a)$, so the hexagons we get only depend on whether the
  valuations of $a$ and $c$ are negative.

  Now we have four cases:
  \begin{caselist}
    \item
      $\val(c) < 0$ and $\val(a) < 0$.  In this case, the hexagon for $g$ is
      \begin{equation*}
	\heximages
	    {\begin{pmatrix}
		1 & 0 & 0 \\ 
		0 & 1 & 0 \\
		0 & 0 & 1
	    \end{pmatrix}}
	    {
	      \begin{pmatrix}
		0 & \e^{\val(a)} & 0 \\
		\e^{-\val(a)} & 0 & 0 \\
		0 & 0 & 1
              \end{pmatrix}
	    }       
	    {
	      \begin{pmatrix}
		1 & 0 & 0 \\
		0 & 0 & \e^{\val(c)} \\
		0 & \e^{-\val(c)}  & 0
	      \end{pmatrix}
	    }
	    {
              \begin{pmatrix}
		0 & \e^{d-k - \val(c)} & 0 \\
		0 & 0 & \e^{\val(c)} \\
		\e^{k-d} & 0 & 0
              \end{pmatrix}
	    }       
	    {
              \begin{pmatrix}
		0 & 0 & \e^{i-f} \\
		\e^{-\val(a)} & 0 & 0 \\
		0 & \e^{\val(a) - i + f} & 0  
              \end{pmatrix}
	    }
	    {\begin{pmatrix}
		0 & 0 & \e^{i-f} \\
		0 & \e^{j - e} & 0 \\
		\e^{k-d} & 0 & 0
	    \end{pmatrix}}.
      \end{equation*}
    \item $\val(c) < 0$ and $\val(a) \ge 0$.  In this case, the hexagon for $g$
    is
    \begin{equation*}
      \heximages
	  {\begin{pmatrix}
              1 & 0 & 0 \\ 
              0 & 1 & 0 \\
              0 & 0 & 1
	  \end{pmatrix}}
	  {
            \begin{pmatrix}
              1 & 0 & 0 \\ 
              0 & 1 & 0 \\
              0 & 0 & 1
            \end{pmatrix}
	  }       
	  {
            \begin{pmatrix}
	      1 & 0 & 0 \\
	      0 & 0 & \e^{\val(c)} \\
	      0 & \e^{-\val(c)}  & 0
            \end{pmatrix}
	  }
	  {
            \begin{pmatrix}
              0 & \e^{d-k - \val(c)} & 0 \\
              0 & 0 & \e^{\val(c)} \\
              \e^{k-d} & 0 & 0
            \end{pmatrix}
	  }       
	  {
            \begin{pmatrix}
              0 & 0 & \e^{i-f} \\ 
              0 & 1 & 0 \\
              \e^{f-i} & 0 & 0
            \end{pmatrix}
	  }
	  {\begin{pmatrix}
              0 & 0 & \e^{i-f} \\
              0 & \e^{j - e} & 0 \\
              \e^{k-d} & 0 & 0
	  \end{pmatrix}}.
    \end{equation*}
    Note that since $\val(a) \ge 0$, by
    Lemma~\ref{lemma:2-f-max-d-min-id-e-g-j-val-a} we must have $e = j$, which
    means that $d-k = i-f$.  Therefore the bottom vertex and the bottom-right
    vertex coincide in this case.
    \item $\val(c) \ge 0$ and $\val(a) < 0$.  In this case, the hexagon for $g$
      is
      \begin{equation*}
	\heximages
	    {\begin{pmatrix}
		1 & 0 & 0 \\ 
		0 & 1 & 0 \\
		0 & 0 & 1
	    \end{pmatrix}}
	    {
	      \begin{pmatrix}
		0 & \e^{\val(a)} & 0 \\
		\e^{-\val(a)} & 0 & 0 \\
		0 & 0 & 1
              \end{pmatrix}
	    }       
	    {
              \begin{pmatrix}
		1 & 0 & 0 \\ 
		0 & 1 & 0 \\
		0 & 0 & 1
              \end{pmatrix}
	    }
	    {
              \begin{pmatrix}
		0 & 0 & \e^{d - k} \\ 
		0 & 1 & 0 \\
		\e^{k-d} & 0 & 0
              \end{pmatrix}
	    }       
	    {
              \begin{pmatrix}
		0 & 0 & \e^{i-f} \\
		\e^{-\val(a)} & 0 & 0 \\
		0 & \e^{\val(a) - i + f} & 0  
              \end{pmatrix}
	    }
	    {\begin{pmatrix}
		0 & 0 & \e^{i-f} \\
		0 & \e^{j - e} & 0 \\
		\e^{k-d} & 0 & 0
	    \end{pmatrix}}.
      \end{equation*}
      Note that since $\val(c) \ge 0$, by
      Lemma~\ref{lemma:2-f-max-d-min-id-e-g-j-val-c} we must have $e = j$,
      which means that $d-k = i-f$.  Therefore the bottom vertex and the
      bottom-left vertex coincide in this case.
    \item $\val(c) \ge 0$ and $\val(a) \ge 0$.  In this case, the hexagon for
      $g$ is
      \begin{equation*}
	\heximages
	    {\begin{pmatrix}
		1 & 0 & 0 \\ 
		0 & 1 & 0 \\
		0 & 0 & 1
	    \end{pmatrix}}
	    {
              \begin{pmatrix}
		1 & 0 & 0 \\ 
		0 & 1 & 0 \\
		0 & 0 & 1
              \end{pmatrix}
	    }       
	    {
              \begin{pmatrix}
		1 & 0 & 0 \\ 
		0 & 1 & 0 \\
		0 & 0 & 1
              \end{pmatrix}
	    }
	    {
              \begin{pmatrix}
		0 & 0 & \e^{d - k} \\ 
		0 & 1 & 0 \\
		\e^{k-d} & 0 & 0
              \end{pmatrix}
	    }       
	    {
              \begin{pmatrix}
		0 & 0 & \e^{i-f} \\ 
		0 & 1 & 0 \\
		\e^{f-i} & 0 & 0
              \end{pmatrix}
	    }
	    {\begin{pmatrix}
		0 & 0 & \e^{i-f} \\
		0 & \e^{j - e} & 0 \\
		\e^{k-d} & 0 & 0
	    \end{pmatrix}}.
      \end{equation*}
      Note that since $\val(c) \ge 0$, by
      Lemma~\ref{lemma:2-f-max-d-min-id-e-g-j-val-c} we must have $e = j$,
      which means that $d-k = i-f$.  Therefore the bottom vertex, the
      bottom-left vertex, and the bottom-right vertex all coincide in this
      case.
  \end{caselist}
  Note that all of these hexagons share two opposite vertices: the top and
  bottom one.  Therefore Theorem~\ref{thm:Triviality-For-One-Intersection}
  applies.

  \subsubsubsection{The case $e = i$}

  In this case, there are intersections with both $U_1 I$ and $U_1 s_1 I$.  If
  $w = s_1$, then
  \begin{equation*}
    h = \begin{pmatrix}
      \e^j & 0 & \gamma \\
      \alpha & \e^i & \beta \\
      0 & 0 & \e^k
    \end{pmatrix}.
  \end{equation*}
  By Theorem~\ref{thm:DeterminingIwahoriDoubleCosetForElement}, the necessary
  conditions for $h$ to be in $IxI$ include
  \begin{align}
    \label{eq:2-f-max-d-min-s1-val-gamma}
    \val(\gamma) &= d \\
    \label{eq:2-f-max-d-min-s1-val-beta}
    \val(\beta) &> d \\
    \label{eq:2-f-max-d-min-s1-k-greater-d}
    \val(\e^k) &> d \implies k > d \\
    \label{eq:2-f-max-d-min-s1-minor-equality}
    \val(\e^i\gamma) &= d + e \\
    \label{eq:2-f-max-d-min-s1-val-minor}
    \val(\beta\e^j - \alpha\gamma) &> d + e
  \end{align}
  We will use these to determine the valuations of $a$, $b$, $c$, and $b-ac$.

  From (\ref{eq:2-f-max-d-min-s1-val-gamma}), (\ref{eq:gamma}), and
  (\ref{eq:2-f-max-d-min-s1-k-greater-d}) we see that
  \begin{equation}
    \label{eq:2-f-max-d-min-s1-val-c}
    \val(c) = d - k < 0.
  \end{equation}

  From (\ref{eq:2-f-max-d-min-s1-val-minor}) and (\ref{eq:minor}) we see that
  \begin{align}
    \val(\e^{i+j}\s(b) - \e^{j+k}b - \e^i\s(a)\gamma) &> d + e \nonumber\\
    \noalign{\noindent and since $e = i$}
    \label{eq:2-f-max-d-min-s1-trailing-term}
    \val(\e^j\s(b) - \e^{k+j-i}b - \s(a)\gamma) &> d. \\
    \noalign{\noindent At the same time, by (\ref{eq:2-f-max-d-min-s1-val-beta}) and
    (\ref{eq:altbeta}),}
    \val(\e^i\s(b) - \e^k b - a\gamma) &> d. \nonumber
  \end{align}
  Now $\val(\s(a)\gamma) = \val(a\gamma)$.  If this valuation were less than or
  equal to $d$, then, because $j > k$ and $i > k$ we would have to have
  $\val(\e^k b) = \val(\e^{k + j - i}b) = \val(a\gamma)$ to cancel the terms of
  valuation $d$ or lower.  But this would require $j = i$, which is
  impossible.  Therefore,
  \begin{align}
    \val(a\gamma) &> d \nonumber \\
    \noalign{\noindent and by (\ref{eq:2-f-max-d-min-s1-val-gamma})}
    \label{eq:2-f-max-d-min-s1-val-a}
    \val(a) &> 0.
  \end{align}

  This means that $\val(\s(a)\gamma) > d$, so to satisfy
  (\ref{eq:2-f-max-d-min-s1-trailing-term}) we must have
  \begin{align}
    \val(\e^{k + j - i} b) &> d \nonumber \\
    \val(b) &> d + i - j - k \nonumber \\
            &> d - k \nonumber  \\
            &= \val(c).
   \end{align}

  Since $\val(c) < 0$ and $\val(a) > 0$, the conditions of
  Section~\ref{sec:hexagons-u-s1} are satisfied.  Note that $\val(b) > \val(c)$
  and $\val(ac) > \val(c)$, so $\val(b-ac) > \val(c)$.  But there are no
  restrictions on how $\val(b)$ compares with $0$.  Therefore we see that
  if $\val(b) \ge 0$ the hexagon for $g$ has the vertices
  \begin{equation}
    \label{eq:2-f-max-d-min-s1-hex1}
    \heximages
       {\begin{pmatrix}
           0 & 1 & 0 \\
           1 & 0 & 0 \\
           0 & 0 & 1
       \end{pmatrix}}
       {\begin{pmatrix}
           0 & 1 & 0 \\
           1 & 0 & 0 \\
           0 & 0 & 1
       \end{pmatrix}}
       {\begin{pmatrix}
           0 & 1 & 0 \\
           0 & 0 & \e^{d-k} \\
           \e^{k-d} & 0 & 0
       \end{pmatrix}}
       {\begin{pmatrix}
           0 & 1 & 0 \\
           0 & 0 & \e^{d-k} \\
           \e^{k-d} & 0 & 0
       \end{pmatrix}}
       {\begin{pmatrix}
           0 & 1 & 0 \\
           1 & 0 & 0 \\
           0 & 0 & 1
       \end{pmatrix}}
       {\begin{pmatrix}
           0 & 1 & 0 \\
           0 & 0 & \e^{d-k} \\
           \e^{k-d} & 0 & 0
       \end{pmatrix}}
  \end{equation}
  and if $\val(b) < 0$ it has the vertices
  \begin{equation}
    \label{eq:2-f-max-d-min-s1-hex2}
    \heximages
       {\begin{pmatrix}
           0 & 1 & 0 \\
           1 & 0 & 0 \\
           0 & 0 & 1
       \end{pmatrix}}
       {\begin{pmatrix}
	   & \hphantom{\e^{-\val(b)}} & \hphantom{\e^{\val(b)}} \\[-1em]
	   0 & 1 & 0 \\ 
           1 & 0 & 0 \\
           0 & 0 & 1 
       \end{pmatrix}}
       {\begin{pmatrix}
           0 & 1 & 0 \\
           0 & 0 & \e^{d-k} \\
           \e^{k-d} & 0 & 0
       \end{pmatrix}}
       {\begin{pmatrix}
           0 & 1 & 0 \\
           0 & 0 & \e^{d-k} \\
           \e^{k-d} & 0 & 0
       \end{pmatrix}}
       {\begin{pmatrix}
           0 & 0 & \e^{\val(b)} \\
           1 & 0 & 0 \\
           0 & \e^{-\val(b)}  & 0
       \end{pmatrix}}
       {\begin{pmatrix}
           0 & 0 & \e^{\val(b)} \\
           0 & \e^{d - k - \val(b)} & 0 \\
           \e^{k-d} & 0  & 0
       \end{pmatrix}}.
  \end{equation}

  Now we look at $w = 1$.  Since $e = i > j$, by
  Lemma~\ref{lemma:2-f-max-d-min-id-e-g-j-val-ac} we know that $\val(ac) = d -
  k$.  This means that $\val(\e^i\s(a)\gamma) = d + i = d + e$.  So for
  (\ref{eq:2-f-max-d-min-id-val-minor}) to be satisfied, we must have,
  by~(\ref{eq:minor}),
  \begin{align*}
    \val(\e^{j+k}b) &\ge d + e \\
    \val(b) &\ge d + e - j - k \\
            &= d  -k + (i - j) \\
            &> d-k.
  \end{align*}
  Since $\val(ac) = d -k$, we conclude that
  \begin{equation}
    \label{eq:2-f-max-d-min-id-e-eq-i-val-b-minus-ac}
    \val(b-ac) = d - k.
  \end{equation}
  Then by (\ref{eq:2-f-max-d-min-id-val-c}) and
  Lemma~\ref{lemma:2-f-max-d-min-id-e-g-j-val-c},
  \begin{equation}
    \label{eq:2-f-max-d-min-id-e-eq-i-val-b-minus-ac2}
    \val(b-ac) < \val(c) < 0.
  \end{equation}



  By Lemma~\ref{lemma:2-f-max-d-min-id-e-g-j-val-a}, $\val(a) < 0$.  So the
  conditions of Section~\ref{sec:hexagons-u} are satisfied.  $d - k =
  \val(b-ac) < \val(c)$ and $\val(c) = d - k - \val(a)$.  The hexagon we get
  depends on how $\val(b)$ compares to $\val(a)$.  If $\val(b) < \val(a)$, the
  hexagon for $g$ has the vertices
  \begin{equation*}
    \heximagessqueezed{4em}
        {
          \begin{pmatrix}
            1 & 0 & 0 \\ 
            0 & 1 & 0 \\
            0 & 0 & 1
          \end{pmatrix}
        }
        {
          \begin{pmatrix}
            & \hphantom{\e^{\val(a) - \val(b)}} & \hphantom{\e^{\val(b)}} \\[-1em]
            0 & \e^{\val(a)} & 0 \\
            \e^{-\val(a)} & 0 & 0 \\
            0 & 0 & 1
          \end{pmatrix}
        }
        {
          \begin{pmatrix}
            1 & 0 & 0 \\
            0 & 0 & \e^{d - k - \val(a)} \\
            0 & \e^{k - d + \val(a)}  & 0
          \end{pmatrix} 
        }
        {
          \begin{pmatrix}
            0 & \e^{\val(a)} & 0 \\
            0 & 0 & \e^{d - k - \val(a)} \\
            \e^{k-d} & 0 & 0
           \end{pmatrix}
        }
        {
	  \begin{pmatrix}
            0 & 0 & \e^{\val(b)} \\
            \e^{-\val(a)} & 0 & 0 \\
            0 & \e^{\val(a) - \val(b)} & 0  
          \end{pmatrix}
        }
        {
          \begin{pmatrix}
            0 & 0 & \e^{\val(b)} \\
            0 & \e^{d-k-\val(b)} & 0 \\
            \e^{k-d} & 0 & 0
          \end{pmatrix}
        }
  \end{equation*}
  and if $\val(b) \ge \val(a)$ it has the vertices
  \begin{equation*}
    \heximages
        {
          \begin{pmatrix}
            1 & 0 & 0 \\ 
            0 & 1 & 0 \\
            0 & 0 & 1
          \end{pmatrix}
        }
        {
          \begin{pmatrix}
            0 & \e^{\val(a)} & 0 \\
            \e^{-\val(a)} & 0 & 0 \\
            0 & 0 & 1
          \end{pmatrix}
        }
        {
          \begin{pmatrix}
            1 & 0 & 0 \\
            0 & 0 & \e^{d - k - \val(a)} \\
            0 & \e^{k - d + \val(a)}  & 0
          \end{pmatrix} 
        }
        {
          \begin{pmatrix}
            0 & \e^{\val(a)} & 0 \\
            0 & 0 & \e^{d - k - \val(a)} \\
            \e^{k-d} & 0 & 0
           \end{pmatrix}
        }
        {
          \begin{pmatrix}
            0 & \e^{\val(a)} & 0 \\
            \e^{-\val(a)} & 0 & 0 \\
            0 & 0 & 1
          \end{pmatrix}
        }
        {
          \begin{pmatrix}
            0 & \e^{\val(a)} & 0 \\
            0 & 0 & \e^{d - k - \val(a)} \\
            \e^{k-d} & 0 & 0
          \end{pmatrix}
        }.
  \end{equation*}

  We want to apply Theorem~\ref{thm:Triviality-For-Nice-Stratification} to this
  case.  In the notation of that theorem, $w_0 = s_1$ and $w_1 = 1$.  The
  subsets $Y_\delta$ correspond to subsets defined by $\val(a) = \delta$.  We
  let $\mu_\delta$ be $(-\val(a), \val(a), 0) = (-\delta, \delta, 0)$, so that
  \begin{equation*}
    \e^{\mu_\delta} = \begin{pmatrix}
      \e^{-\val(a)} & 0 & 0 \\
      0 & \e^{\val(a)} & 0 \\
      0 & 0 & 1
    \end{pmatrix}
  \end{equation*}
  and let $Y'_\delta = \e^{\mu_\delta} Y_\delta$.  Then when $\val(b) <
  \val(a)$ the hexagon corresponding to elements of $Y'_\delta$ is
  \begin{equation*}
    \heximagessqueezed{6.5em}
        {
          \begin{pmatrix}
            \e^{-\val(a)} & 0 & 0 \\ 
            0 & \e^{\val(a)} & 0 \\
            0 & 0 & 1
          \end{pmatrix}
        }
        {
          \begin{pmatrix}
             & \hphantom{\e^{\val(a) - \val(b)}} &
	       \hphantom{\e^{\val(b)-\val(a)}} \\[-1em]
            0 & 1 & 0 \\
            1 & 0 & 0 \\
            0 & 0 & 1
          \end{pmatrix}
        }
        {
          \begin{pmatrix}
            \e^{-\val(a)} & 0 & 0 \\
            0 & 0 & \e^{d - k} \\
            0 & \e^{k - d + \val(a)}  & 0
          \end{pmatrix} 
        }
        {
          \begin{pmatrix}
            \hphantom{\e^{-\val(a)}} & \hphantom{\e^{k - d + \val(a)}} & \\[-1em]
            0 & 1 & 0 \\
            0 & 0 & \e^{d - k} \\
            \e^{k-d} & 0 & 0
           \end{pmatrix}
        }
        {
          \begin{pmatrix}
            0 & 0 & \e^{\val(b)-\val(a)} \\
            1 & 0 & 0 \\
            0 & \e^{\val(a) - \val(b)} & 0  
          \end{pmatrix}
        }
        {
          \begin{pmatrix}
            0 & 0 & \e^{\val(b) - \val(a)} \\
            0 & \e^{d-k-\val(b) + \val(a)} & 0 \\
            \e^{k-d} & 0 & 0
          \end{pmatrix}
        }
  \end{equation*}
  and when $\val(b) \ge \val(a)$ it is
  \begin{equation*}
    \heximages
        {
          \begin{pmatrix}
            \e^{-\val(a)} & 0 & 0 \\ 
            0 & \e^{\val(a)} & 0 \\
            0 & 0 & 1
          \end{pmatrix}
        }
        {
          \begin{pmatrix}
            0 & 1 & 0 \\
            1 & 0 & 0 \\
            0 & 0 & 1
          \end{pmatrix}
        }
        {
          \begin{pmatrix}
            \e^{-\val(a)} & 0 & 0 \\
            0 & 0 & \e^{d - k} \\
            0 & \e^{k - d + \val(a)}  & 0
          \end{pmatrix} 
        }
        {
          \begin{pmatrix}
            0 & 1 & 0 \\
            0 & 0 & \e^{d - k} \\
            \e^{k-d} & 0 & 0
           \end{pmatrix}
        }
        {
          \begin{pmatrix}
            0 & 1 & 0 \\
            1 & 0 & 0 \\
            0 & 0 & 1
          \end{pmatrix}
        }
        {
          \begin{pmatrix}
            0 & 1 & 0 \\
            0 & 0 & \e^{d - k} \\
            \e^{k-d} & 0 & 0
          \end{pmatrix}
        }.
  \end{equation*}
  Comparing these to the hexagons in (\ref{eq:2-f-max-d-min-s1-hex1}) and
  (\ref{eq:2-f-max-d-min-s1-hex2}) we see that all four sets of hexagons share
  two opposite vertices: the top right and bottom left one.  Define the set $Z$
  as in the statement of Theorem~\ref{thm:Triviality-For-Nice-Stratification}.
  Then $\A$ acts on $Z$ by left-multiplication, and $\X$ is a disjoint union of
  translates of $Z$ by elements of $\A(F)/\A$.  So by
  Proposition~\ref{prop:closedness-of-hexagon-piece}, $Z$ is closed.

  Now we show that $\X \cap U_1 s_1 I$ is closed in $Z$.  Observe that by
  (\ref{eq:2-f-max-d-min-s1-hex1}) and (\ref{eq:2-f-max-d-min-s1-hex2}) the
  hexagon corresponding to any element of $(\X \cap U_1 s_1 I)$ has a top
  vertex that coincides with the top-right vertex and a top-left vertex that
  coincides with the bottom-left vertex.  That is, this hexagon is
  degenerate, and actually is a trapezoid that lies to one side of the line
  connecting the top-right and bottom-left vertices.  A hexagon corresponding
  to an element of $Y'_\delta$ has those same top-right and bottom-left
  vertices, but has top and top-left vertices that are distinct.  In
  particular, those two vertices are on the opposite side of the
  top-right-to-bottom-left line from the hexagons corresponding to elements of
  $(\X \cap U_1 s_1 I)$.  Thus the closure of $(\X \cap U_1 s_1 I)$ in $X$
  cannot contain any elements of any of the $Y'_\delta$, and in particular $(\X
  \cap U_1 s_1 I)$ is closed in $Z$.

  Finally, we show that
  \begin{equation*}
    (\X \cap U_1 s_1 I) \bigcup \left(\bigcup_{\delta > m} Y'_\delta\right)
  \end{equation*}
  is closed in $Z$.  To show this, it will be enough to show that if $\delta_1
  > \delta_2$ then no hexagon corresponding to an element of $Y'_{\delta_1}$
  can contain a hexagon corresponding to an element of $Y'_{\delta_2}$.  Now
  $Y_{\delta_1}$ and $Y_{\delta_2}$ share the same top vertex.  The sides
  connecting the top and top-right vertex are different lengths.  In fact, the
  length depends $\delta_1$ and $\delta_2$.  Since $0 > \delta_1 > \delta_2$,
  so that $|\delta_1| < |\delta_2$, the side length of the hexagon
  corresponding to an element of $Y_{\delta_1}$ is smaller.  When we translate
  to get $Y'_{\delta_1}$ and $Y'_{\delta_2}$, we are translating both hexagons
  parallel to the line connecting the top and top-right vertex, and the two
  top-right vertices end up in the same place.  But this means that we have to
  translate the hexagon corresponding to an element of $Y_{\delta_2}$ further,
  so that its top vertex is no longer inside the hexagon corresponding to an
  element of $Y_{\delta_1}$, as shown in Figure~\ref{fig:different-lengths}.
  \begin{figure}[htbp]
    \begin{center}
      \includegraphics{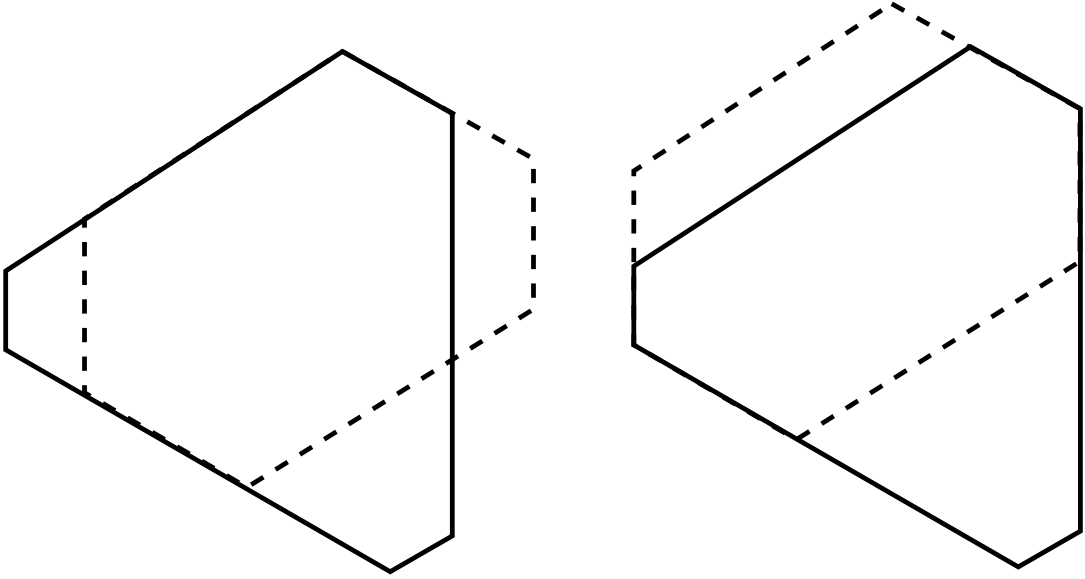}
    \end{center}
    \caption{Hexagons corresponding to elements of $Y_{\delta_1}$,
    $Y_{\delta_2}$, $Y'_{\delta_1}$, $Y'_{\delta_2}$.  Solid lines are
    $Y_{\delta_1}$ and $Y'_{\delta_1}$; dashed lines are $Y_{\delta_2}$ and
    $Y'_{\delta_2}$. $Y_{\delta_1}$ and $Y_{\delta_2}$ are shown on the left;
    $Y'_{\delta_1}$ and  $Y'_{\delta_2}$ are shown on the right.}
    \label{fig:different-lengths}
  \end{figure}

  This means that no hexagon corresponding to an element of $Y'_{\delta_1}$
  contains a hexagon corresponding to an element of $Y'_{\delta_2}$.  That is,
  the closure of $Y'_{\delta_1}$ in $X$ does not intersect $Y'_{\delta_2}$.
  Since the closure of a finite union is the union of the closures, the closure
  of
  \begin{equation*}
    (\X \cap U_1 s_1 I) \bigcup \left(\bigcup_{\delta > m} Y'_\delta\right)
  \end{equation*}
  in $X$ does not intersect $Y'_\delta$ for $\delta \le m$.  Which means that
  this set is closed in $Z$, and we can apply
  Theorem~\ref{thm:Triviality-For-Nice-Stratification} to this case.
  
  \subsubsection{$x = \e^{(d,e,f)} s_1s_2s_1$, with $f \le e \le d$ and $e \ge j$}
  Let
  \begin{equation}
    x = \e^{(d, e, f)}s_1s_2s_1 = \begin{pmatrix}
        0 & 0 & \e^d  \\
        0 & \e^e & 0 \\
         \e^f & 0 & 0
      \end{pmatrix}
  \end{equation}
  where $f \le e \le d$ and $d + e + f = 0$.  Let $\nu = (i,j,k)$ with $i > j >
  k$ and $i + j + k = 0$.  Assume that $e \ge j$.  We will show that in this
  case the intersection $\X \cap U_1wI$ is nonempty only when $w = s_2s_1$ or $w
  =s_1s_2s_1$, and that if $e \neq i$ only the $w = s_1 s_2 s_1$ intersection
  is nonempty.  Then we will show that if $e \neq i$
  Theorem~\ref{thm:Triviality-For-One-Intersection} applies and otherwise
  Theorem~\ref{thm:Triviality-For-Nice-Stratification} applies.

  Since $f \le e \le d$, by
  Theorem~\ref{thm:DeterminingIwahoriDoubleCosetForElement} the entry in the
  third row and first column of $h$ must have valuation $f$ and the valuation
  of the determinant of the bottom-left $2 \times 2$ minor must be $f + e$.
  That means that $w$ can only be one of $s_1s_2$, $s_2s_1$, and $s_1s_2s_1$,
  since for all other values of $w$ the bottom-left entry is 0.

  If $w = s_1s_2$, then
  \begin{equation*}
    h = \begin{pmatrix}
      \e^j & \gamma & 0 \\
      0 & \e^k & 0 \\
      \alpha & \beta & \e^i
    \end{pmatrix}.
  \end{equation*}
  For $h$ to be in $IxI$, we must must have $\val(\alpha) = f$ and
  $\val(\e^k\alpha) = f + e$.  This means that $e = k$.  But by assumption, $e
  \ge j > k$, so in this case $\X \cap U_1 s_1 s_2 I = \emptyset$.

  If $w = s_2s_1$, then
  \begin{equation*}
    h = \begin{pmatrix}
      \e^k & 0 & 0 \\
      \beta & \e^i & \alpha \\
      \gamma & 0 & \e^j
    \end{pmatrix}.
  \end{equation*}
  For $h$ to be in $IxI$, we must have $\val(\gamma) = f$ and $\val(\e^i\gamma)
  = f + e$.  This means that $e = i$.  In all other cases, $\X \cap U_1 s_2 s_1
  I = \emptyset$.

  So for $e \ne i$ we only have nonempty intersections with $U_1 wI$
  for $w = s_1s_2s_1$.  In this case, \begin{equation*}
    h = \begin{pmatrix}
      \e^k & 0 & 0 \\
      \gamma & \e^j & 0 \\
      \beta & \alpha & \e^i
    \end{pmatrix}.
  \end{equation*}
  By Theorem~\ref{thm:DeterminingIwahoriDoubleCosetForElement}, the necessary
  conditions for $h$ to be in $IxI$ include
  \begin{align}
    \label{eq:2-d-max-f-min-id-val-beta}
    \val(\beta) &= f \\
    \label{eq:2-d-max-f-min-id-val-gamma}
    \val(\gamma) &\ge f \\
    \label{eq:2-d-max-f-min-id-k-greater-f}
    \val(\e^k) &\ge f \implies k \ge f \\
    \label{eq:2-d-max-f-min-id-i-less-d}
    \val(\e^{j+k}) &\ge f + e \implies j + k \ge f + e \implies i \le d \\
    \label{eq:2-d-max-f-min-id-val-minor}
    \val(\beta\e^j - \alpha\gamma) &= f + e \\
    \label{eq:2-d-max-f-min-id-val-gamma2}
    \val(\e^i \gamma) &\ge f + e \\
    \label{eq:2-d-max-f-min-id-val-alpha}
    \val(\e^k\alpha) &\ge f + e.
  \end{align}
  We will use these to determine the valuations of $a$, $b$, $c$, and $b-ac$.

  First, note that by (\ref{eq:2-d-max-f-min-id-val-alpha}) and
  (\ref{eq:alpha})
  \begin{equation}
    \label{eq:2-d-max-f-min-id-val-a}
    \val(a) \ge f + e -  j - k.
  \end{equation}

  By (\ref{eq:2-d-max-f-min-id-val-gamma}),
  (\ref{eq:2-d-max-f-min-id-val-gamma2}), and (\ref{eq:gamma}),
  \begin{equation}
    \label{eq:2-d-max-f-min-id-val-c}
    \val(c) \ge \max(f-k, (f-k) + (e -i)).
  \end{equation}

  \begin{lemma}
    \label{lemma:2-d-max-f-min-id-val-ac}
    $\val(ac) \ge f - k$.
  \end{lemma}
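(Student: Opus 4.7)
The plan is to mirror the proof of Lemma \ref{lemma:2-f-max-d-min-id-val-ac} from the earlier analogous case, now with the roles of $d$ and $f$ essentially swapped and using the fact that $w$ is now $s_1s_2s_1$ rather than $1$. The argument will be a proof by contradiction: assume $\val(ac) < f - k$ and derive that this would force $i = j$, which is impossible.

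First, I would use (\ref{eq:gamma}) together with the assumption to deduce that $\val(a\gamma) = k + \val(ac) < f$, noting that inside $\gamma = \e^j\s(c) - \e^k c$ the second summand dominates once multiplied by $a$ (since $k < j$ and cancellation cannot change the valuation here). Then (\ref{eq:altbeta}) combined with (\ref{eq:2-d-max-f-min-id-val-beta}) and the inequality $i > k$ forces $\val(\e^k b) = \val(a\gamma)$, so that $\val(\e^{j+k}b) = j + \val(a\gamma)$ while $\val(\e^{i+j}\s(b)) = (i-k) + j + \val(a\gamma)$ is strictly larger.

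Next, I would apply (\ref{eq:2-d-max-f-min-id-val-minor}) together with (\ref{eq:minor}) to obtain $\val(\e^{i+j}\s(b) - \e^{j+k}b - \e^i\s(a)\gamma) = f + e$. The first difference has valuation exactly $j + \val(a\gamma)$, which is strictly less than $j + f \le f + e$ (using the standing hypothesis $e \ge j$ of this case). For the three-term sum to have valuation $f + e$ rather than $j + \val(a\gamma)$, the remaining term $\e^i\s(a)\gamma$ must cancel the leading part of $\e^{j+k}b$; but its valuation is $i + \val(a\gamma)$, and equality $i + \val(a\gamma) = j + \val(a\gamma)$ would force $i = j$, a contradiction.

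The main subtlety, and the place where the hypothesis $e \ge j$ is essential, is the inequality $j + f \le f + e$ used to conclude that the term $j + \val(a\gamma)$ really is smaller than $f + e$; without it the argument would only yield a weaker bound. The rest is careful bookkeeping with the ultrametric inequality, entirely parallel to Lemma \ref{lemma:2-f-max-d-min-id-val-ac}, so I do not expect any genuinely new obstacle.
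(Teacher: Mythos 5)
Your proposal is correct and follows the same contradiction argument as the paper: assume $\val(ac) < f - k$, deduce $\val(\e^k b) = \val(a\gamma)$ from (\ref{eq:2-d-max-f-min-id-val-beta}), then use (\ref{eq:minor}) and (\ref{eq:2-d-max-f-min-id-val-minor}) together with $j \le e$ to force $i = j$, a contradiction. The bookkeeping details you supply (exact valuation of $a\gamma$, dominance of the $\e^k c$ summand, comparison $j + \val(a\gamma) < j + f \le f + e$) match the paper's reasoning and are all sound.
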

  \begin{proof}
    Assume $\val(ac) < f - k$.  Then by (\ref{eq:gamma}), $\val(a\gamma) < f$.
    Now by (\ref{eq:2-d-max-f-min-id-val-beta}) and (\ref{eq:beta}) and because
    $i > k$, we must have
    \begin{equation*}
      \val(\e^k b) = \val(a\gamma) < f.
    \end{equation*}
    This would mean, because $i > j$ and $j \le e$, that
    \begin{align*}
      \val(\e^{i+j}\s(b) - \e^{j+k}b) &= j + \val(a\gamma) \\
      &< e + f.
    \end{align*}
    But then, by (\ref{eq:minor}), to satisfy
    (\ref{eq:2-d-max-f-min-id-val-minor}) we must have
    \begin{align*}
      \val(\e^i\s(a)\gamma) &= \val(\e^{i+j}\s(b) - \e^{j+k}b) \\
      & = j + \val(a\gamma).
    \end{align*}
    This requires $i = j$, which is impossible.  So $\val(ac) \ge f-k$.
  \end{proof}

  \begin{lemma}
    \label{lemma:2-d-max-f-min-id-e-g-j-val-ac}
    If $e > j$, then $\val(ac) = f - k$.
  \end{lemma}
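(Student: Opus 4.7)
The plan is to mimic the proof of Lemma~\ref{lemma:2-f-max-d-min-id-e-g-j-val-ac} that appeared in the analogous case $d < e < f$, but with the roles of $d$ and $f$ swapped. Since $\val(ac) \ge f-k$ is already given by Lemma~\ref{lemma:2-d-max-f-min-id-val-ac}, I would prove the reverse inequality by contradiction: assume $\val(ac) > f-k$ and derive a violation of (\ref{eq:2-d-max-f-min-id-val-minor}).

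First I would use the hypothesis $\val(ac) > f-k$ to pin down $\val(b)$. From (\ref{eq:gamma}), $a\gamma = \e^j a\s(c) - \e^k ac$; since $j > k$ and $\val(\e^k ac) > f$, we also get $\val(\e^j a\s(c)) = j + \val(ac) > j + f - k > f$, so $\val(a\gamma) > f$. Now look at (\ref{eq:altbeta}), $\beta = \e^i\s(b) - \e^k b - a\gamma$: the hypothesis $\val(\beta) = f$ from (\ref{eq:2-d-max-f-min-id-val-beta}) forces $\val(\e^i\s(b) - \e^k b) = f$, and since $i > k$, the dominant term is $\e^k b$, giving $\val(b) = f - k$.

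Next I would compute $\val(\beta\e^j - \alpha\gamma)$ using the rewriting (\ref{eq:minor}): it equals $\val(\e^{i+j}\s(b) - \e^{j+k}b - \e^i\s(a)\gamma)$. Of the three summands, $\e^{j+k}b$ has valuation $j + f$ while $\e^{i+j}\s(b)$ has valuation $i + j + f - k > j + f$, and $\e^i\s(a)\gamma$ has valuation $i + \val(a\gamma) > i + f > j + f$. Therefore the expression has valuation exactly $j + f$. But (\ref{eq:2-d-max-f-min-id-val-minor}) requires it to equal $e + f$, forcing $j = e$, contradicting the assumption $e > j$. Combined with Lemma~\ref{lemma:2-d-max-f-min-id-val-ac}, this gives $\val(ac) = f - k$.

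The argument is essentially bookkeeping on valuations, so I do not anticipate a real obstacle; the only care needed is in tracking which terms dominate in the sums defining $a\gamma$ and $\beta\e^j - \alpha\gamma$, and in making sure the strict inequality $e > j$ (rather than $e \ge j$) is used precisely where it is needed to close the contradiction.
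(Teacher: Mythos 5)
Your proof is correct and follows essentially the same route as the paper: assume $\val(ac) > f-k$, deduce $\val(b) = f-k$ from the $\beta$ equation, then show $\val(\beta\e^j - \alpha\gamma)$ would be exactly $f+j < f+e$, contradicting (\ref{eq:2-d-max-f-min-id-val-minor}). You spell out the intermediate step $\val(a\gamma) > f$ more explicitly than the paper does, but the argument is the same.
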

  \begin{proof}
    Assume $\val(ac) > f - k$.  Then by (\ref{eq:2-d-max-f-min-id-val-beta})
    and because $i > k$, we must have $\val(\e^k b) = f$.  In that case
    \begin{align*}
      \val(\e^{i+j}\s(b) - \e^{k+j}b) &= \val(\e^{k+j}b) \\
      &= f + j.
    \end{align*}
    At the same time, because $j > k$,
    \begin{align*}
      \val(\e^i \s(a) \gamma)) &= i + k + \val(ac) \\
      &> i + k + f - k \\
      &= f + i \\
      &> f + j.
    \end{align*}
    Since the difference of these two terms if $\beta\e^j - \alpha\gamma$, we
    have $\val(\beta\e^j - \alpha\gamma) = f + j < f + e$, contradicting
    (\ref{eq:2-d-max-f-min-id-val-minor}).  Therefore we must have $\val(ac)
    \le f - k$.  Since we already know $\val(ac) \ge f-k$ by
    Lemma~\ref{lemma:2-d-max-f-min-id-val-ac}, we conclude that $\val(ac) = f -
    k$.
  \end{proof}

  \begin{lemma}
    \label{lemma:2-d-max-f-min-id-e-g-j-val-a}
    If $e > j$ then $0 \ge \val(a)$.
  \end{lemma}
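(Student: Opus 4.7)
The plan is to mimic the proof of the analogous Lemma~\ref{lemma:2-f-max-d-min-id-e-g-j-val-a} from the preceding section almost verbatim, since the only difference is that the governing inequalities in this case are non-strict (owing to $f \le e \le d$ rather than $d < e < f$), which weakens the conclusion from $\val(a) < 0$ to $\val(a) \le 0$.

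Concretely, I would combine the two facts already available: by Lemma~\ref{lemma:2-d-max-f-min-id-e-g-j-val-ac}, under the assumption $e > j$ we have the exact identity $\val(ac) = f - k$; and by (\ref{eq:2-d-max-f-min-id-val-c}) we have the lower bound $\val(c) \ge f - k$ (note this is the non-strict analogue of the strict bound used in the $d < e < f$ case). Taking valuations in the factorization $a \cdot c = ac$ gives
\begin{equation*}
\val(a) = \val(ac) - \val(c) \le (f-k) - (f-k) = 0,
\end{equation*}
which is exactly the claim.

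There is no real obstacle here; the lemma is essentially a one-line consequence of the two results just established. The only thing worth remarking is why the inequality is non-strict rather than strict: in the earlier analogous case the corresponding bound on $\val(c)$ came from strict inequalities in (\ref{eq:2-f-max-d-min-id-val-gamma}) and (\ref{eq:2-f-max-d-min-id-val-gamma2}), whereas in the present case the corresponding conditions (\ref{eq:2-d-max-f-min-id-val-gamma}) and (\ref{eq:2-d-max-f-min-id-val-gamma2}) are $\ge$ rather than $>$, so equality $\val(c) = f-k$ is permitted and accordingly $\val(a) = 0$ is permitted.
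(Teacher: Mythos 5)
Your proof is correct and matches the paper's argument exactly: cite Lemma~\ref{lemma:2-d-max-f-min-id-e-g-j-val-ac} for $\val(ac)=f-k$, cite (\ref{eq:2-d-max-f-min-id-val-c}) for $\val(c)\ge f-k$, and subtract. The remark explaining why the bound is $\le 0$ rather than $<0$ is accurate but not needed.
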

  \begin{proof}
    By Lemma~\ref{lemma:2-d-max-f-min-id-e-g-j-val-ac}, $\val(ac) = f - k$.
    But by (\ref{eq:2-d-max-f-min-id-val-c}), $\val(c) \ge f - k$.  Therefore
    $\val(a) \le 0$.
  \end{proof}

  \begin{lemma}
    \label{lemma:2-d-max-f-min-id-e-g-j-val-c}
    If $e > j$ then $0 > \val(c)$.
  \end{lemma}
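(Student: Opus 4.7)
The plan is to deduce this lemma as an immediate quantitative consequence of the two lemmas just established, by combining the exact value of $\val(ac)$ with the lower bound on $\val(a)$. Concretely, since $e > j$, Lemma~\ref{lemma:2-d-max-f-min-id-e-g-j-val-ac} gives $\val(ac) = f - k$, and (\ref{eq:2-d-max-f-min-id-val-a}) gives $\val(a) \ge f + e - j - k$. Using the fact that valuations are additive on products in $L$, I would write $\val(c) = \val(ac) - \val(a)$ and substitute these two expressions.

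The substitution yields
\begin{equation*}
  \val(c) = (f - k) - \val(a) \le (f - k) - (f + e - j - k) = j - e.
\end{equation*}
Since $e > j$ by hypothesis, $j - e < 0$, and therefore $\val(c) < 0$, which is the desired conclusion.

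There is essentially no obstacle here: the content is entirely packaged into Lemma~\ref{lemma:2-d-max-f-min-id-e-g-j-val-ac} and (\ref{eq:2-d-max-f-min-id-val-a}), and the proof is just the single displayed inequality above together with the observation that $j - e < 0$. The only thing to keep track of is the direction of the inequality: a lower bound on $\val(a)$ translates into an upper bound on $-\val(a)$ and hence on $\val(c)$, which is exactly what is needed to conclude $\val(c) < 0$.
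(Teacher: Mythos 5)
Your proposal is correct and matches the paper's proof essentially line for line: both invoke Lemma~\ref{lemma:2-d-max-f-min-id-e-g-j-val-ac} for $\val(ac) = f-k$ and (\ref{eq:2-d-max-f-min-id-val-a}) for the lower bound on $\val(a)$, then conclude $\val(c) \le j - e < 0$.
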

  \begin{proof}
    By Lemma~\ref{lemma:2-d-max-f-min-id-e-g-j-val-ac}, $\val(ac) = f - k$.
    But by (\ref{eq:2-d-max-f-min-id-val-a}), $\val(a) \ge (f - k) + (e-j)$.
    Therefore $\val(c) \le j - e < 0$.
  \end{proof}

  Now we have three possibilities: $e > i$, $e < i$, and $e = i$.

  \subsubsubsection{The case $e > i$}

  \begin{lemma}
    \label{lemma:2-d-max-f-min-id-e-g-i-val-b}
    If $e > i$, then $\val(b) = f + 2i$.  This means that $\val(b) < 0$,
    $\val(b) < \val(a)$, and $\val(b-ac) = f-k < \val(c) < 0$.
  \end{lemma}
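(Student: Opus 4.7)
The plan is to transcribe the argument of Lemma~\ref{lemma:2-f-max-d-min-id-e-g-i-val-b} almost verbatim, replacing $d$ by $f$ throughout, since the current subcase is the Weyl-symmetric mirror of the one treated there (we are now in the $w = s_1 s_2 s_1$ double coset rather than the $w = 1$ one, so the defining entries of $h$ have been reflected across the anti-diagonal, but the algebra governing the valuations of $a,b,c$ in terms of $\alpha,\beta,\gamma$ is identical). The hypothesis $e > i$ combined with $i > j$ gives $e > j$, so Lemmas~\ref{lemma:2-d-max-f-min-id-e-g-j-val-ac},~\ref{lemma:2-d-max-f-min-id-e-g-j-val-a},~and~\ref{lemma:2-d-max-f-min-id-e-g-j-val-c} are all available.

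The first step will be to invoke Lemma~\ref{lemma:2-d-max-f-min-id-e-g-j-val-ac} to fix $\val(ac) = f - k$. Since $\gamma = \e^j\s(c) - \e^k c$ with $j > k$, we have $\val(\gamma) = k + \val(c)$, which gives $\val(\e^i\s(a)\gamma) = i + k + \val(ac) = f + i$. Because $e > i$, this is strictly less than $f + e$. Feeding this into the expansion~(\ref{eq:minor}) and the equality~(\ref{eq:2-d-max-f-min-id-val-minor}), the only way the required equality $\val(\beta\e^j - \alpha\gamma) = f+e$ can hold (while $\val(\e^i\s(a)\gamma) = f + i < f + e$) is for the term $\e^{j+k}b$ in the expansion to cancel $\e^i\s(a)\gamma$ up to order $f + e$; since $i > k$ forces $\val(\e^{i+j}\s(b)) > \val(\e^{j+k}b)$, this pins $\val(\e^{j+k}b) = f + i$, so $\val(b) = f + i - j - k = f + 2i$.

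The remaining clauses are then immediate bookkeeping. We get $\val(b) = f + i - j - k < f + e - j - k \le \val(a)$ by~(\ref{eq:2-d-max-f-min-id-val-a}), and $\val(a) \le 0$ by Lemma~\ref{lemma:2-d-max-f-min-id-e-g-j-val-a}, so $\val(b) < \val(a) \le 0$. Rewriting $\val(b) = (f - k) + (i - j) > f - k = \val(ac)$ yields $\val(b - ac) = f - k$. Finally, $\val(c) \ge (f-k) + (e-i) > f - k$ comes straight from~(\ref{eq:2-d-max-f-min-id-val-c}), and $\val(c) < 0$ from Lemma~\ref{lemma:2-d-max-f-min-id-e-g-j-val-c}.

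I do not anticipate any real obstacle; the argument is a mechanical mirror of the one already written. The only point that needs a second's attention is that, although the defining hypotheses for this subsection have been weakened from strict inequalities in the previous subsection (e.g.\ $\val(\e^k) > d$) to non-strict ones here (e.g.\ $\val(\e^k) \ge f$), the conclusion $\val(c) > f - k$ still comes out strict, precisely because $e > i$ ensures $(f - k) + (e - i) > f - k$ in~(\ref{eq:2-d-max-f-min-id-val-c}). Once this is checked, every strict comparison needed in the conclusion is available.
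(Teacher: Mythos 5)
Your proposal is correct and follows essentially the same route as the paper's proof: invoke Lemma~\ref{lemma:2-d-max-f-min-id-e-g-j-val-ac} to pin $\val(ac)=f-k$, compute $\val(\e^i\s(a)\gamma)=f+i<f+e$, force cancellation in (\ref{eq:minor}) against (\ref{eq:2-d-max-f-min-id-val-minor}) using $i>k$ to get $\val(b)=f+2i$, and then finish with (\ref{eq:2-d-max-f-min-id-val-a}), (\ref{eq:2-d-max-f-min-id-val-c}), and Lemmas~\ref{lemma:2-d-max-f-min-id-e-g-j-val-a} and~\ref{lemma:2-d-max-f-min-id-e-g-j-val-c}. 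Your final observation about strictness of $\val(c)>f-k$ coming from $e>i$ in~(\ref{eq:2-d-max-f-min-id-val-c}) is exactly the step the paper uses as well.
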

  \begin{proof}
    Since $e > i > j$, we know that $\val(ac) = f - k$ by
    Lemma~\ref{lemma:2-d-max-f-min-id-e-g-j-val-ac}.  So
    \begin{align*}
      \val(\e^i\s(a)\gamma) &= i + f - k + k \\
      &= f + i \\
      &< f + e.
    \end{align*}
    But then, by (\ref{eq:minor}), to satisfy
    (\ref{eq:2-d-max-f-min-id-val-minor}) we must have
    \begin{align*}
      \val(\e^{i+j}\s(b) - \e^{k+j}b) &= f + i \\
      \val(\e^{k+j}b) &= f + i\\
      \val(b) &= f + i - j - k \\
              &= f + 2i.
    \end{align*}
    Now \begin{align*}
      \val(b) &= f + i - j - k \\
              &< f + e - j - k.
    \end{align*}
    By (\ref{eq:2-d-max-f-min-id-val-a}), $f + e - j - k \le \val(a)$, so
    $\val(b) < \val(a)$.  Since $\val(a) \le 0$ by
    Lemma~\ref{lemma:2-d-max-f-min-id-e-g-j-val-a}, we see that $\val(b) < 0$.
    
    Since $\val(b) = f - k + (i - j) > f-k$ and by
    Lemma~\ref{lemma:2-d-max-f-min-id-e-g-j-val-ac} $\val(ac) = f - k$, we see
    that $\val(b-ac) = f-k$.  By (\ref{eq:2-d-max-f-min-id-val-c}), $f-k < (f -
    k) + (e-i) \le \val(c)$, and by
    Lemma~\ref{lemma:2-d-max-f-min-id-e-g-j-val-c} $\val(c) < 0$.
  \end{proof}

  Since $\val(b) < 0$ and $\val(b-ac) < \val(c) < 0$, the conditions of
  Section~\ref{sec:hexagons-u-s1-s2-s1} are satisfied.  By
  Lemma~\ref{lemma:2-d-max-f-min-id-e-g-j-val-a}, $\val(a) \le 0$.  This means
  that the hexagon for $g$ has the vertices
  \begin{equation*}
    \heximagessqueezedcarefully{2.0em}{-0.4em}
       {\begin{pmatrix}
           0 & 0 & 1 \\
           0 & 1 & 0 \\
           1 & 0 & 0 
       \end{pmatrix}}
       {\begin{pmatrix}
           0 & \e^{\val(a)} & 0 \\
           0 & 0 & \e^{-\val(a)}  \\
           1 & 0 & 0
       \end{pmatrix}}
       {\begin{pmatrix}
           0 & 0 & 1 \\
           \e^{\val(c)}  & 0 & 0 \\
           0 & \e^{-\val(c)}  & 0
       \end{pmatrix}}
       {\begin{pmatrix}
           0 & \e^{f-k - \val(c)} & 0 \\
           \e^{\val(c)} & 0 & 0 \\
           0 & 0 & \e^{k-f} 
       \end{pmatrix}}
       {\begin{pmatrix}
           \e^{f+2i}  & 0 & 0 \\
           0 & 0 & \e^{-\val(a)} \\
           0 & \e^{\val(a) - f - 2i} & 0  
       \end{pmatrix}}
       {\begin{pmatrix}
           \e^{f+2i} & 0 & 0 \\
           0 & \e^{-k-2i} & 0 \\
           0 & 0 & \e^{k-f} 
       \end{pmatrix}}
  \end{equation*}
  when $e > i$.  Note that all of these hexagons share two opposite vertices:
  the top and bottom one.  Therefore
  Theorem~\ref{thm:Triviality-For-One-Intersection} applies.

  \subsubsubsection{The case $e < i$}
  \begin{lemma}
    \label{lemma:2-d-max-f-min-id-e-l-i-val-b}
    If $e < i$, then $\val(b) = f + e - j - k = i - d$.  This means that
    $\val(b) \le 0$ and $\val(b) \le \val(a)$.
  \end{lemma}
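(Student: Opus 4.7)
The plan is to mirror verbatim the argument used for Lemma~\ref{lemma:2-f-max-d-min-id-e-l-i-val-b} in the previous subsection, swapping the role of $d$ with $f$ (so that $i-f$ becomes $i-d$). The structural input is the bound $\val(ac) \ge f-k$ supplied by Lemma~\ref{lemma:2-d-max-f-min-id-val-ac}, which was proved immediately before this lemma for exactly this purpose.

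First I would combine $\val(ac) \ge f-k$ with (\ref{eq:gamma}) and (\ref{eq:2-d-max-f-min-id-val-gamma}) to estimate
\begin{equation*}
\val(\e^i\s(a)\gamma) \ge i + (f-k) + k = f + i.
\end{equation*}
In the hypothesized regime $e < i$ this strictly exceeds $f+e$, so in the expansion (\ref{eq:minor}) of (\ref{eq:2-d-max-f-min-id-val-minor}),
\begin{equation*}
\val\bigl(\e^{i+j}\s(b) - \e^{j+k}b - \e^i\s(a)\gamma\bigr) = f+e,
\end{equation*}
the term $\e^i\s(a)\gamma$ is negligible. Since $i > k$ forces $\val(\e^{i+j}\s(b)) > \val(\e^{j+k}b)$, the identity pins down $\val(\e^{j+k}b) = f+e$, giving $\val(b) = f+e-j-k$. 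Using $d+e+f=0$ and $i+j+k=0$ this simplifies to $i - d$.

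The two inequalities then fall out of material already on hand. By (\ref{eq:2-d-max-f-min-id-i-less-d}) we have $i \le d$, so $\val(b) = i - d \le 0$; and by (\ref{eq:2-d-max-f-min-id-val-a}) we have $\val(a) \ge f + e - j - k = \val(b)$. The only even mild subtlety is ensuring that the Frobenius-twisted $\e^{i+j}\s(b)$ does not coincidentally share a valuation with $\e^{j+k}b$, which is ruled out automatically by $i > k$ (Frobenius preserves valuations). So there is no real obstacle: the proof is a direct transcription of the argument given for the analogous statement in the preceding subsection.
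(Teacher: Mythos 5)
Your proof is correct and matches the paper's argument step for step: it invokes Lemma~\ref{lemma:2-d-max-f-min-id-val-ac} to bound $\val(\e^i\s(a)\gamma) \ge f+i > f+e$, then uses $i>k$ to isolate the $\e^{j+k}b$ term in the minor equation and conclude $\val(b) = f+e-j-k = i-d$, with the final two inequalities read off from (\ref{eq:2-d-max-f-min-id-i-less-d}) and (\ref{eq:2-d-max-f-min-id-val-a}). No discrepancies.
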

  \begin{proof}
    By Lemma~\ref{lemma:2-d-max-f-min-id-val-ac}, $\val(ac) \ge f -k$.
    Therefore, by (\ref{eq:gamma}),
    \begin{align*}
      \val(\e^i\s(a)\gamma) &\ge f + i \\
       &> f + e.
    \end{align*}
    Since $i > k$ and (\ref{eq:minor}) holds, to satisfy
    (\ref{eq:2-d-max-f-min-id-val-minor}) we must have
    \begin{align*}
      \val(\e^{j+k} b) &= f + e \\
      \val(b) &= f + e - j - k \\
      &= i - d.
    \end{align*}
    By (\ref{eq:2-d-max-f-min-id-i-less-d}), $\val(b) \le 0$.  By
    (\ref{eq:2-d-max-f-min-id-val-a}), $\val(b) \le \val(a)$.
  \end{proof}

  \begin{lemma}
    If $e < i$, then $\val(b-ac) = f -k$.  This means that $\val(b-ac) \le 0$
    and $\val(b-ac) \le \val(c)$.
  \end{lemma}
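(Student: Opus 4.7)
The plan is to mirror the proof structure of the analogous $\val(b-ac)$ lemma that appeared in the parallel case $d < e < f$ earlier in the paper. The key tool is equation (\ref{eq:beta}), which gives $\beta = \e^i\s(b) - \e^k b - a\e^j\s(c) + \e^k ac$, combined with the known valuation $\val(\beta) = f$ from (\ref{eq:2-d-max-f-min-id-val-beta}). The strategy is to show that two of the four terms on the right, namely $\e^i\s(b)$ and $a\e^j\s(c)$, have valuation strictly greater than $f$, so that the equality of valuation $f$ must come from the remaining combination $-\e^k(b - ac)$.

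First I would bound $\val(\e^i\s(b))$. By Lemma~\ref{lemma:2-d-max-f-min-id-e-l-i-val-b}, $\val(b) = i - d$, so $\val(\e^i\s(b)) = 2i - d$. To compare with $f$, note that $d + f = -e$, so $2i - d > f$ is equivalent to $2i > -e$. Since $e \ge j$ by assumption and $j + k = -i$ with $k < i$, we get $-e \le -j = i + k < 2i$, confirming $\val(\e^i\s(b)) > f$. Next I would bound $\val(a\e^j\s(c))$: by Lemma~\ref{lemma:2-d-max-f-min-id-val-ac}, $\val(ac) \ge f - k$, so $\val(a\e^j\s(c)) = j + \val(ac) \ge j + f - k > f$ because $j > k$.

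With both of those terms having valuation strictly greater than $f$, while $\val(\beta) = f$, the expansion forces $\val(\e^k(b - ac)) = f$, hence $\val(b - ac) = f - k$. The two inequalities claimed at the end of the lemma are then immediate: $f - k \le 0$ follows from (\ref{eq:2-d-max-f-min-id-k-greater-f}), and $f - k \le \val(c)$ follows from (\ref{eq:2-d-max-f-min-id-val-c}), which asserts $\val(c) \ge f - k$.

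The main obstacle is the inequality $\val(\e^i\s(b)) > f$, which does not reduce to one of the standing hypotheses directly but requires combining $e < i$, $e \ge j$, and the relation $i + j + k = 0$; once that combinatorial check is in hand, the rest of the argument is a direct cancellation in the four-term expansion of $\beta$, exactly parallel to the proof of the $\val(b-ac) = d - k$ lemma for the case $d < e < f$.
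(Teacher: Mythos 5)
Your argument is correct and follows the same route as the paper: expand $\beta$ via (\ref{eq:beta}), show $\val(\e^i\s(b)) > f$ (using $\val(b) = i - d = f + e - j - k$ together with $i > k$ and $e \ge j$) and $\val(a\e^j\s(c)) > f$ (using Lemma~\ref{lemma:2-d-max-f-min-id-val-ac} and $j > k$), then conclude $\val(\e^k(b-ac)) = f$ from (\ref{eq:2-d-max-f-min-id-val-beta}). The only difference is cosmetic: the paper writes $\val(b)$ as $f + e - j - k$ and chains the inequalities directly, whereas you substitute $i - d$ and rearrange through $d + f = -e$, but the logical content is identical.
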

  \begin{proof}
    By Lemma~\ref{lemma:2-d-max-f-min-id-e-l-i-val-b}, $\val(b) = f + e - j -
    k$.  Hence
    \begin{align*}
      \val(\e^i \s(b)) &= f + e - j - k + i \\
      &> f + e - j \\
      &\ge f.
    \end{align*}
    By Lemma~\ref{lemma:2-d-max-f-min-id-val-ac}, $\val(ac) \ge f-k$.  So
    \begin{align*}
      \val(\e^j a \s(c)) &\ge f - k + j \\
      &> f.
    \end{align*}
    For (\ref{eq:2-d-max-f-min-id-val-beta}) to be satisfied, we must have
    \begin{align*}
      \val(-\e^k b + \e^k ac) &= f\\
      \val(b - ac) &= f-k.
    \end{align*}
    By (\ref{eq:2-d-max-f-min-id-k-greater-f}), $\val(b-ac) \le 0$.  By
    (\ref{eq:2-d-max-f-min-id-val-c}), $\val(b-ac) \le \val(c)$.
  \end{proof}

  Since $\val(b) \le 0$, $\val(b-ac) \le \val(c)$, and $\val(b-ac) \le 0$, the
  conditions of Section~\ref{sec:hexagons-u-s1-s2-s1} are satisfied.  We have
  $\val(b) \le \val(a)$, so the hexagons we get only depend on whether the
  valuations of $a$ and $c$ are positive.

  Now we have four cases:
  \begin{caselist}
    \item $\val(c) \le 0$ and $\val(a) \le 0$.  In this case, the hexagon for
      $g$ is
      \begin{equation*}
	\heximages
	    {\begin{pmatrix}
		0 & 0 & 1 \\
		0 & 1 & 0 \\
		1 & 0 & 0 
	    \end{pmatrix}}
	    {\begin{pmatrix}
		0 & \e^{\val(a)} & 0 \\
		0 & 0 & \e^{-\val(a)}  \\
		1 & 0 & 0
	    \end{pmatrix}}
	    {\begin{pmatrix}
		0 & 0 & 1 \\
		\e^{\val(c)}  & 0 & 0 \\
		0 & \e^{-\val(c)}  & 0
	    \end{pmatrix}}
	    {\begin{pmatrix}
		0 & \e^{f-k - \val(c)} & 0 \\
		\e^{\val(c)} & 0 & 0 \\
		0 & 0 & \e^{k-f} 
	    \end{pmatrix}}
	    {\begin{pmatrix}
		\e^{i-d}  & 0 & 0 \\
		0 & 0 & \e^{-\val(a)} \\
		0 & \e^{\val(a) - i + d} & 0  
	    \end{pmatrix}}
	    {\begin{pmatrix}
		\e^{i-d} & 0 & 0 \\
		0 & \e^{j-e} & 0 \\
		0 & 0 & \e^{k-f} 
	    \end{pmatrix}}.
      \end{equation*}
    \item $\val(c) \le 0$ and $\val(a) > 0$.  In this case, the hexagon for $g$
      is
      \begin{equation*}
	\heximages
	    {\begin{pmatrix}
		0 & 0 & 1 \\
		0 & 1 & 0 \\
		1 & 0 & 0 
	    \end{pmatrix}}
	    {\begin{pmatrix}
		0 & 0 & 1 \\
		0 & 1 & 0 \\
		1 & 0 & 0 
	    \end{pmatrix}}
	    {\begin{pmatrix}
		0 & 0 & 1 \\
		\e^{\val(c)}  & 0 & 0 \\
		0 & \e^{-\val(c)}  & 0
	    \end{pmatrix}}
	    {\begin{pmatrix}
		0 & \e^{f-k - \val(c)} & 0 \\
		\e^{\val(c)} & 0 & 0 \\
		0 & 0 & \e^{k-f} 
	    \end{pmatrix}}
	    {\begin{pmatrix}
		\e^{i-d}  & 0 & 0 \\
		0 & 1 & 0 \\
		0 & 0 & \e^{d - i}  
	    \end{pmatrix}}
	    {\begin{pmatrix}
		\e^{i-d} & 0 & 0 \\
		0 & \e^{j-e} & 0 \\
		0 & 0 & \e^{k-f} 
	    \end{pmatrix}}.
      \end{equation*}
    Note that since $\val(a) > 0$, by
    Lemma~\ref{lemma:2-d-max-f-min-id-e-g-j-val-a} we must have $e = j$, which
    means that $d-k = i-f$.  Therefore the bottom vertex and the bottom-right
    vertex coincide in this case.
    \item $\val(c) > 0$ and $\val(a) \le 0$.  In this case, the hexagon for $g$
      is
      \begin{equation*}
	\heximages
	    {\begin{pmatrix}
		0 & 0 & 1 \\
		0 & 1 & 0 \\
		1 & 0 & 0 
	    \end{pmatrix}}
	    {\begin{pmatrix}
		0 & \e^{\val(a)} & 0 \\
		0 & 0 & \e^{-\val(a)}  \\
		1 & 0 & 0
	    \end{pmatrix}}
	    {\begin{pmatrix}
		0 & 0 & 1 \\
		0 & 1 & 0 \\
		1 & 0 & 0 
	    \end{pmatrix}}
	    {\begin{pmatrix}
		\e^{f-k} & 0 & 0 \\
		0 & 1 & 0 \\
		0 & 0 & \e^{k-f} 
	    \end{pmatrix}}
	    {\begin{pmatrix}
		\e^{i-d}  & 0 & 0 \\
		0 & 0 & \e^{-\val(a)} \\
		0 & \e^{\val(a) - i + d} & 0  
	    \end{pmatrix}}
	    {\begin{pmatrix}
		\e^{i-d} & 0 & 0 \\
		0 & \e^{j-e} & 0 \\
		0 & 0 & \e^{k-f} 
	    \end{pmatrix}}.
      \end{equation*}
      Note that since $\val(c) > 0$, by
      Lemma~\ref{lemma:2-d-max-f-min-id-e-g-j-val-c} we must have $e = j$,
      which means that $d-k = i-f$.  Therefore the bottom vertex and the
      bottom-left vertex coincide in this case.
    \item $\val(c) > 0$ and $\val(a) > 0$.  In this case, the hexagon for $g$
      is
      \begin{equation*}
	\heximages
	    {\begin{pmatrix}
		0 & 0 & 1 \\
		0 & 1 & 0 \\
		1 & 0 & 0 
	    \end{pmatrix}}
	    {\begin{pmatrix}
		0 & 0 & 1 \\
		0 & 1 & 0 \\
		1 & 0 & 0 
	    \end{pmatrix}}
	    {\begin{pmatrix}
		0 & 0 & 1 \\
		0 & 1 & 0 \\
		1 & 0 & 0 
	    \end{pmatrix}}
	    {\begin{pmatrix}
		\e^{f-k} & 0 & 0 \\
		0 & 1 & 0 \\
		0 & 0 & \e^{k-f} 
	    \end{pmatrix}}
	    {\begin{pmatrix}
		\e^{i-d} & 0 & 0 \\
		0 & 1 & 0 \\
		0 & 0 & \e^{d-i} 
	    \end{pmatrix}}
	    {\begin{pmatrix}
		\e^{i-d} & 0 & 0 \\
		0 & \e^{j-e} & 0 \\
		0 & 0 & \e^{k-f} 
	    \end{pmatrix}}.
      \end{equation*}
      Note that since $\val(c) > 0$, by
      Lemma~\ref{lemma:2-d-max-f-min-id-e-g-j-val-c} we must have $e = j$,
      which means that $d-k = i-f$.  Therefore the bottom vertex, the
      bottom-left vertex, and the bottom-right vertex all coincide in this
      case.
  \end{caselist}
  Note that all of these hexagons share two opposite vertices: the top and
  bottom one.  Therefore Theorem~\ref{thm:Triviality-For-One-Intersection}
  applies.

  \subsubsubsection{The case $e = i$}

  In this case, there are intersections with both $U_1 s_1s_2s_1 I$ and $U_1
  s_2s_1 I$.  If $w = s_2s_1$, then
  \begin{equation*}
    h = \begin{pmatrix}
      \e^k & 0 & 0 \\
      \beta & \e^i & \alpha \\
      \gamma & 0 & \e^j
    \end{pmatrix}.
  \end{equation*}
  By Theorem~\ref{thm:DeterminingIwahoriDoubleCosetForElement}, the necessary
  conditions for $h$ to be in $IxI$ include
  \begin{align}
    \label{eq:2-d-max-f-min-s1-val-gamma}
    \val(\gamma) &= f \\
    \label{eq:2-d-max-f-min-s1-val-beta}
    \val(\beta) &\ge f \\
    \label{eq:2-d-max-f-min-s1-minor-equality}
    \val(\e^i\gamma) &= f + e \\
    \label{eq:2-d-max-f-min-s1-k-greater-f}
    \val(\e^{j+k}) &\ge f + e \implies j + k \ge f + i \implies k > f \\
    \label{eq:2-d-max-f-min-s1-val-minor}
    \val(\beta\e^j - \alpha\gamma) &\ge f + e
  \end{align}
  We will use these to determine the valuations of $a$, $b$, $c$, and $b-ac$.

  From (\ref{eq:2-d-max-f-min-s1-val-gamma}), (\ref{eq:gamma}), and
  (\ref{eq:2-d-max-f-min-s1-k-greater-f}) we see that
  \begin{equation}
    \label{eq:2-d-max-f-min-s1-val-c}
    \val(c) = f - k < 0.
  \end{equation}

  From (\ref{eq:2-d-max-f-min-s1-val-minor}) and (\ref{eq:minor}) we see that
  \begin{align}
    \val(\e^{i+j}\s(b) - \e^{j+k}b - \e^i\s(a)\gamma) &\ge f + e \nonumber\\
    \noalign{\noindent and since $e = i$}
    \label{eq:2-d-max-f-min-s1-trailing-term}
    \val(\e^j\s(b) - \e^{k+j-i}b - \s(a)\gamma) &\ge f. \\
    \noalign{\noindent At the same time, by (\ref{eq:2-d-max-f-min-s1-val-beta}) and
    (\ref{eq:altbeta}),}
    \val(\e^i\s(b) - \e^k b - a\gamma) &\ge f. \nonumber
  \end{align}
  Now $\val(\s(a)\gamma) = \val(a\gamma)$.  If this valuation were less than
  $f$, then, because $j > k$ and $i > k$  we would have to have $\val(\e^k b) =
  \val(\e^{k + j - i}b) = \val(a\gamma)$ to cancel the terms of valuation lower
  than $f$.  But this would require $j = i$, which is impossible.  Therefore,
  \begin{align}
    \val(a\gamma) &\ge f \nonumber \\
    \noalign{\noindent and by (\ref{eq:2-d-max-f-min-s1-val-gamma})}
    \label{eq:2-d-max-f-min-s1-val-a}
    \val(a) &\ge 0.
  \end{align}

  This means that $\val(\s(a)\gamma) \ge f$, so to satisfy
  (\ref{eq:2-d-max-f-min-s1-trailing-term}) we must have
  \begin{align}
    \val(\e^{k + j - i} b) &\ge f \nonumber \\
    \val(b) &\ge f + i - j - k \nonumber \\
            &> f - k \nonumber  \\
            &= \val(c).
  \end{align}

  Since $\val(c) < 0$, $\val(a) \ge 0$, and $\val(b) > \val(c)$, the conditions
  of Section~\ref{sec:hexagons-u-s2-s1} are satisfied.  Note that there are no
  restrictions on how $\val(b)$ compares with $0$.  Therefore we see that if
  $\val(b) > 0$ the hexagon for $g$ has the vertices
  \begin{equation}
    \label{eq:2-d-max-f-min-s1-hex1}
    \heximages
	{\begin{pmatrix}
            0 & 1 & 0 \\
            0 & 0 & 1 \\
            1 & 0 & 0
	\end{pmatrix}}
	{\begin{pmatrix}
            0 & 1 & 0 \\
            0 & 0 & 1 \\
            1 & 0 & 0
	\end{pmatrix}}
	{\begin{pmatrix}
            0 & 1 & 0 \\
            \e^{f-k} & 0 & 0  \\
            0 & 0 & \e^{k-f}
	\end{pmatrix}}
	{\begin{pmatrix}
            0 & 1 & 0 \\
            \e^{f-k} & 0 & 0  \\
            0 & 0 & \e^{k-f}
	\end{pmatrix}}
	{\begin{pmatrix}
            0 & 1 & 0 \\
            0 & 0 & 1 \\
            1 & 0 & 0
	\end{pmatrix}}
	{\begin{pmatrix}
            0 & 1 & 0 \\
            \e^{f-k} & 0 & 0  \\
            0 & 0 & \e^{k-f}
	\end{pmatrix}}
  \end{equation}
  and if $\val(b) \le 0$ it has the vertices
  \begin{equation}
    \label{eq:2-d-max-f-min-s1-hex2}
    \heximages
	{\begin{pmatrix}
            0 & 1 & 0 \\
            0 & 0 & 1 \\
            1 & 0 & 0
	\end{pmatrix}}
	{\begin{pmatrix}
            0 & 1 & 0 \\
            0 & 0 & 1 \\
            1 & 0 & 0
	\end{pmatrix}}
	{\begin{pmatrix}
            0 & 1 & 0 \\
            \e^{f-k} & 0 & 0  \\
            0 & 0 & \e^{k-f}
	\end{pmatrix}}
	{\begin{pmatrix}
            0 & 1 & 0 \\
            \e^{f-k} & 0 & 0  \\
            0 & 0 & \e^{k-f}
	\end{pmatrix}}
	{\begin{pmatrix}
            \e^{\val(b)} & 0 & 0 \\
            0 & 0 & 1 \\
            0 & \e^{-\val(b)} & 0
	\end{pmatrix}}
	{\begin{pmatrix}
            \e^{\val(b)} & 0 & 0 \\
            0 & \e^{f-k-\val(b)} & 0  \\
            0 & 0 & \e^{k-f}
	\end{pmatrix}}.
  \end{equation}

  Now we look at $w = s_1s_2s_1$.  Since $e = i > j$, by
  Lemma~\ref{lemma:2-d-max-f-min-id-e-g-j-val-ac} we know that $\val(ac) = f -
  k$.  This means that $\val(\e^i\s(a)\gamma) = f + i = f + e$.  So for
  (\ref{eq:2-d-max-f-min-id-val-minor}) to be satisfied, we must have,
  by~(\ref{eq:minor}),
  \begin{align*}
    \val(\e^{j+k}b) &\ge f + e \\
    \val(b) &\ge f + e - j - k \\
            &= f  - k + (i - j) \\
            &> f-k.
  \end{align*}
  Since $\val(ac) = f -k$, we conclude that
  \begin{equation}
    \label{eq:2-d-max-f-min-id-e-eq-i-val-b-minus-ac}
    \val(b-ac) = f - k.
  \end{equation}
  Then by (\ref{eq:2-d-max-f-min-id-val-c}) and
  Lemma~\ref{lemma:2-d-max-f-min-id-e-g-j-val-c},
  \begin{equation}
    \label{eq:2-d-max-f-min-id-e-eq-i-val-b-minus-ac2}
    \val(b-ac) \le \val(c) < 0.
  \end{equation}



  By Lemma~\ref{lemma:2-d-max-f-min-id-e-g-j-val-a}, $\val(a) \le 0$.  By
  (\ref{eq:2-d-max-f-min-id-e-eq-i-val-b-minus-ac2}), $\val(b-ac) \le \val(c) <
  0$.  So the conditions of Section~\ref{sec:hexagons-u-s1-s2-s1} are
  satisfied.  $f - k = \val(b-ac)$ and $\val(c) = f - k - \val(a)$.
  The hexagon we get depends on how $\val(b)$ compares to $\val(a)$.  If
  $\val(b) \le \val(a)$, the hexagon for $g$ has the vertices
  \begin{equation*}
    \heximagessqueezed{4em}
    	{\begin{pmatrix}
	    0 & 0 & 1 \\
	    0 & 1 & 0 \\
	    1 & 0 & 0 
	\end{pmatrix}}
	{\begin{pmatrix}
	    0 & \e^{\val(a)} & 0 \\
	    0 & 0 & \e^{-\val(a)}  \\
	    1 & 0 & 0
	\end{pmatrix}}
	{\begin{pmatrix}
	    0 & 0 & 1 \\
	    \e^{f - k - \val(a)}  & 0 & 0 \\
	    0 & \e^{\val(a) - f + k}  & 0
	\end{pmatrix}}
	{\begin{pmatrix}
	    0 & \e^{\val(a)} & 0 \\
	    \e^{f-k-\val(a)} & 0 & 0 \\
	    0 & 0 & \e^{k-f} 
	\end{pmatrix}}
	{\begin{pmatrix}
	    \e^{\val(b)}  & 0 & 0 \\
	    0 & 0 & \e^{-\val(a)} \\
	    0 & \e^{\val(a) - \val(b)} & 0  
	\end{pmatrix}}
	{\begin{pmatrix}
	    \e^{\val(b)} & 0 & 0 \\
	    0 & \e^{f-k-\val(b)} & 0 \\
	    0 & 0 & \e^{k-f} 
	\end{pmatrix}}.
  \end{equation*}
  and if $\val(b) > \val(a)$ it has the vertices
  \begin{equation*}
    \heximages
    	{\begin{pmatrix}
	    0 & 0 & 1 \\
	    0 & 1 & 0 \\
	    1 & 0 & 0 
	\end{pmatrix}}
	{\begin{pmatrix}
	    0 & \e^{\val(a)} & 0 \\
	    0 & 0 & \e^{-\val(a)}  \\
	    1 & 0 & 0
	\end{pmatrix}}
	{\begin{pmatrix}
	    0 & 0 & 1 \\
	    \e^{f - k - \val(a)}  & 0 & 0 \\
	    0 & \e^{\val(a) - f + k}  & 0
	\end{pmatrix}}
	{\begin{pmatrix}
	    0 & \e^{\val(a)} & 0 \\
	    \e^{f-k-\val(a)} & 0 & 0 \\
	    0 & 0 & \e^{k-f} 
	\end{pmatrix}}
	{\begin{pmatrix}
	    0 & \e^{\val(a)} & 0 \\
	    0 & 0 & \e^{-\val(a)}  \\
	    1 & 0 & 0
	\end{pmatrix}}
	{\begin{pmatrix}
	    0 & \e^{\val(a)} & 0 \\
	    \e^{f-k-\val(a)} & 0 & 0 \\
	    0 & 0 & \e^{k-f} 
	\end{pmatrix}}.
  \end{equation*}

  We want to apply Theorem~\ref{thm:Triviality-For-Nice-Stratification} to this
  case.  In the notation of that theorem, $w_0 = s_2s_1$ and $w_1 = s_1s_2s_1$.
  The subsets $Y_\delta$ correspond to subsets defined by $\val(a) = \delta$.
  We let $\mu_\delta$ be $(-\val(a), \val(a), 0) = (-\delta, \delta, 0)$, so
  that
  \begin{equation*}
    \e^{\mu_\delta} = \begin{pmatrix}
      \e^{-\val(a)} & 0 & 0 \\
      0 & \e^{\val(a)} & 0 \\
      0 & 0 & 1
    \end{pmatrix}.
  \end{equation*}
  and let $Y'_\delta = \e^{\mu_\delta} Y_\delta$.  Then when $\val(b) \le
  \val(a)$ the hexagon corresponding to elements of $Y'_\delta$ is
  \begin{equation*}
    \heximagessqueezed{6.5em}
    	{\begin{pmatrix}
	    0 & 0 & \e^{-\val(a)} \\
	    0 & \e^{\val(a)} & 0 \\
	    1 & 0 & 0 
	\end{pmatrix}}
	{\begin{pmatrix}
            \hphantom{\e^{\val(b)-\val(a)}} &
	    \hphantom{\e^{\val(a) - \val(b)}} & \\[-1em]
	    0 & 1 & 0 \\
	    0 & 0 & 1  \\
	    1 & 0 & 0
	\end{pmatrix}}
	{\begin{pmatrix}
	    0 & 0 & \e^{-\val(a)} \\
	    \e^{f - k}  & 0 & 0 \\
	    0 & \e^{\val(a) - f + k}  & 0
	\end{pmatrix}}
	{\begin{pmatrix}
            & \hphantom{\e^{k - d + \val(a)}} &
	    \hphantom{\e^{-\val(a)}} \\[-1em]
	    0 & 1 & 0 \\
	    \e^{f-k} & 0 & 0 \\
	    0 & 0 & \e^{k-f} 
	\end{pmatrix}}
	{\begin{pmatrix}
	    \e^{\val(b)-\val(a)}  & 0 & 0 \\
	    0 & 0 & 1 \\
	    0 & \e^{\val(a) - \val(b)} & 0  
	\end{pmatrix}}
	{\begin{pmatrix}
	    \e^{\val(b)-\val(a)} & 0 & 0 \\
	    0 & \e^{f-k-\val(b) + \val(a)} & 0 \\
	    0 & 0 & \e^{k-f} 
	\end{pmatrix}}
  \end{equation*}
  and when $\val(b) > \val(a)$ it is
  \begin{equation*}
    \heximages
    	{\begin{pmatrix}
	    0 & 0 & \e^{-\val(a)} \\
	    0 & \e^{\val(a)} & 0 \\
	    1 & 0 & 0 
	\end{pmatrix}}
	{\begin{pmatrix}
	    0 & 1 & 0 \\
	    0 & 0 & 1  \\
	    1 & 0 & 0
	\end{pmatrix}}
	{\begin{pmatrix}
	    0 & 0 & \e^{-\val(a)} \\
	    \e^{f - k}  & 0 & 0 \\
	    0 & \e^{\val(a) - f + k}  & 0
	\end{pmatrix}}
	{\begin{pmatrix}
	    0 & 1 & 0 \\
	    \e^{f-k} & 0 & 0 \\
	    0 & 0 & \e^{k-f} 
	\end{pmatrix}}
	{\begin{pmatrix}
	    0 & 1 & 0 \\
	    0 & 0 & 1  \\
	    1 & 0 & 0
	\end{pmatrix}}
	{\begin{pmatrix}
	    0 & 1 & 0 \\
	    \e^{f-k} & 0 & 0 \\
	    0 & 0 & \e^{k-f} 
	\end{pmatrix}}
  \end{equation*}
  Comparing these to the hexagons in (\ref{eq:2-d-max-f-min-s1-hex1}) and
  (\ref{eq:2-d-max-f-min-s1-hex2}) we see that all four sets of hexagons share
  two opposite vertices: the top right and bottom left one.  Further, for $(\X
  \cap U_1 s_2s_1 I)$ the top vertex coincides with the top-right vertex and
  the top-left vertex coincides with the bottom-left vertex.  By an argument
  similar to the one we gave for the $e = i$ case in
  Section~\ref{sec:first-long-two-cycle}, we can apply
  Theorem~\ref{thm:Triviality-For-Nice-Stratification} to this case.  

  \subsubsection{$x = \e^{(d, e, f)}$, with $f \le e \le d$}
  Let
  \begin{equation}
    x = \e^{(d, e, f)} = \begin{pmatrix}
        \e^d  & 0 & 0\\
        0 & \e^e & 0 \\
        0 & 0 & \e^f
      \end{pmatrix}
  \end{equation}
  where $f \le e \le d$.  Let $\nu = (i,j,k)$ with $i > j > k$ and $i + j + k =
  0$.  We will show that in this case the intersection $\X \cap U_1 wI$ is
  nonempty only when $w = 1$, and that
  Theorem~\ref{thm:Triviality-For-One-Intersection} applies.

  By (\ref{eq:image-matrix}),
  \begin{equation*}
    h = w^{-1}\begin{pmatrix}
      \e^i & \alpha & \beta \\
      0 & \e^j & \gamma \\
      0 & 0 & \e^k    
    \end{pmatrix}w.
  \end{equation*}
  In all cases, the bottom-right entry of $h$ is one of $\e^i$, $\e^j$, and
  $\e^k$.  So in order to have $h \in IxI$, we must have $f = i$, $f=j$, or
  $f=k$.  But because $i + j + k = 0$ and $i > j > k$, we must have $i > 0$.
  At the same time, because $d + e + f = 0$ and $f \le e \le d$, we must have
  $f \le 0$.  So $f \neq i$.

  If $f = j$, then we must have $w = s_2$ or $w = s_1 s_2$.  If $w = s_2$, then
  \begin{equation*}
    h = \begin{pmatrix}
      \e^i & \beta & \alpha \\
      0 & \e^k & 0 \\
      0 & \gamma & \e^j    
    \end{pmatrix}.
  \end{equation*}
  To have $h \in IxI$, we must have $j + k = e + f$.  Since $f = j$, this means
  $e = k$.  But $f \le e$ and $k < j$, so this is impossible.

  If $w = s_1 s_2$, then
  \begin{equation*}
    h = \begin{pmatrix}
      \e^k & 0 & 0 \\
      \beta& \e^i & \alpha \\
      \gamma & 0 & \e^j    
    \end{pmatrix}.
  \end{equation*}
  To have $h \in IxI$, we must have $j + i = e + f$, so that $k = d$.  But $d
  \ge 0$ and $k < 0$, so this is impossible.
  
  If $f = k$, then we must have $w = 1$ or $w = s_1$. If $w = s_1$, then
  \begin{equation*}
    h = \begin{pmatrix}
      \e^j & 0 & \gamma \\
      \alpha & \e^i & \beta \\
      0 & 0 & \e^k    
    \end{pmatrix}.
  \end{equation*}
  To have $h \in IxI$ we must have $k + i = f + e$, so that $i = e$.  Then
  $d = j$, and we must have $d < e$, since $j < i$.  But, by assumption, $d \ge
  e$, so this is impossible.

  Thus, we must have $w = 1$.  In this case, $f = k$ and 
  \begin{equation*}
    h = \begin{pmatrix}
      \e^i & \alpha & \beta \\
      0 & \e^j & \gamma \\
      0 & 0 & \e^k    
    \end{pmatrix}.
  \end{equation*}
  By Theorem~\ref{thm:DeterminingIwahoriDoubleCosetForElement}, the necessary
  conditions for $h$ to be in $IxI$ include
  \begin{align}
    \label{eq:1-equalities}
    j + k &= f + e \implies e = j,\quad i = d \\
    \label{eq:1-val-alpha}
    \val(\alpha\e^k) &> f + e \\
    \label{eq:1-val-gamma}
    \val(\gamma) &> f \\
    \label{eq:1-val-beta}
    \val(\beta) &> f.
  \end{align}

  From (\ref{eq:1-val-alpha}), (\ref{eq:alpha}), and (\ref{eq:1-equalities}) we
  see that
  \begin{align}
    \val(a) + j + k &> j + k \nonumber \\
    \label{eq:1-val-a}
    \val(a) &> 0.
  \end{align}

  From (\ref{eq:1-val-gamma}), (\ref{eq:gamma}), and (\ref{eq:1-equalities}) we
  see that
  \begin{align}
    \val(c) + k &> k \nonumber \\
    \label{eq:1-val-c}
    \val(c) &> 0.
  \end{align}

  From (\ref{eq:1-val-beta}), (\ref{eq:beta}), and (\ref{eq:1-equalities}) we
  see that
  \begin{align}
    \val(\e^i\s(b) - \e^k(b) - a\gamma) &> k. \nonumber \\
    \noalign{\noindent But by (\ref{eq:1-val-a}), (\ref{eq:1-val-gamma}), and
    (\ref{eq:1-equalities}), $\val(a\gamma) > k$.  So}
    \val(\e^i\s(b) - \e^k(b)) &> k \nonumber \\
    \noalign{\noindent and since $i > k$ we must have}
    \val(b) &> 0.
  \end{align}
  In this case, $a$, $b$, and $c$ can all be eliminated from $g$ using the
  right-action of $I$, leaving
  \begin{equation*}
    g = \begin{pmatrix}
      1 & 0 & 0 \\
      0 & 1 & 0 \\
      0 & 0 & 1    
    \end{pmatrix}.
  \end{equation*}
  The corresponding hexagon has all the vertices at the same point.  So
  Theorem~\ref{thm:Triviality-For-One-Intersection} applies.
  
  \bibliographystyle{hplainyr}
  \bibliography{reductive-groups}
    
\end{document}